\documentclass[12pt,a4paper,reqno]{amsart}
\usepackage{amsmath,amsthm,verbatim,amssymb,amsfonts,amscd, graphicx,mathtools}
\allowdisplaybreaks
\usepackage{xcolor}
\usepackage{geometry}
\usepackage{graphicx}
\usepackage{tikz}
\usepackage[normalem]{ulem}
\usepackage[
  colorlinks=true,
  linkcolor=blue,
  citecolor=blue,
  urlcolor=blue,
  breaklinks=true
]{hyperref}
\usepackage{enumitem}
\usepackage{cancel}
\usepackage[english,capitalize]{cleveref}
\definecolor{darkblue}{rgb}{0,0,0.3}
\definecolor{urlblue}{rgb}{0,0,0.7}
\hypersetup{
	colorlinks=true,
	citecolor=blue,
	linkcolor=darkblue,
	urlcolor=urlblue,
}

\usepackage[maxbibnames=99, backend=biber, sorting=nyt, doi=false, url=false, isbn=false, style=alphabetic]{biblatex}
\newtheorem{lemma}{Lemma}
\newtheorem{question}{Question}
\newtheorem{proposition}{Proposition}
\newtheorem{theorem}{Theorem}
\newtheorem{corollary}{Corollary}
\theoremstyle{definition}

\newtheorem{remark}{Remark}
\newtheorem*{claim}{Claim}

\renewcommand{\leq}{\leqslant}
\renewcommand{\geq}{\geqslant}

\DeclareMathOperator{\Ric}{Ric}

\newcommand{\D}{\nabla}

\newcommand{\metric}[2]{\langle#1,#2\rangle}

\DeclareMathOperator{\diam}{diam}
\renewcommand{\tilde}{\widetilde}

\newcommand{\eps}{\varepsilon}

\renewcommand{\epsilon}{\varepsilon}

\title[Spectral Ricci Bounds and Mean-Convex Boundary]{Rigidity and flexibility under spectral Ricci lower bounds and mean-convex boundary}

\author{Gioacchino Antonelli}
\address[Gioacchino Antonelli]{Department of Mathematics, University of Notre Dame, Hurley Hall, 255 Hurley, Notre Dame, IN 46556, United States}
\email{gantonel@nd.edu}
\author{Yangyang Li}
\address[Yangyang Li]
{Department of Mathematics, University of Notre Dame, Hurley Hall, 255 Hurley, Notre Dame, IN 46556, United States}
\email{yyli@nd.edu}
\author{Paul Sweeney Jr}
\address[Paul Sweeney Jr]
{ Department of Mathematics, Michigan State University, 619 Red Cedar Road, C212 Wells Hall, East Lansing, MI 48824, USA}
\email{sween245@msu.edu}

\begin{document}
\begin{abstract}
    We study Riemannian manifolds $(M^n,g)$ with mean-convex boundary whose Ricci curvature is nonnegative in a spectral sense. Our first main result is a sharp spectral extension of a rigidity theorem by Kasue: we prove that under the conditions
\[
\lambda_1(-\gamma\Delta+\mathrm{Ric})\geq 0,\qquad 
H_{\partial M}\geq 0,
\]
and in the sharp range \(0\leq \gamma<4\) if \(n=2\), and \(0\leq\gamma<\frac{n-1}{n-2}\) if \(n\geq3\), a (possibly noncompact) complete manifold with disconnected boundary, with at least one compact boundary component, must split isometrically as a product $[0,L]\times \Sigma$. 

Our second main contribution is a topological rigidity result for the relative
fundamental group $\pi_1(M,\partial M)$, combined with a deep theorem of
Lawson--Michelsohn. We prove that, in dimensions $n\neq4$, any compact
manifold with boundary satisfying the two inequalities above, with at least one
of them strict, admits a metric with positive sectional curvature
and strictly mean-convex boundary, provided $\gamma\geq0$ if $n=2$, and
$0\leq\gamma\leq\frac{n-1}{n-2}$ if $n\geq3$. This range of $\gamma$ is sharp for the latter result to hold.
\end{abstract}
\maketitle
\tableofcontents
\section{Introduction}
Let $n\geq 2$, and let $(M^n,g)$ be a smooth complete manifold, possibly with boundary. In this paper, all the manifolds are assumed to be connected. We define the function $\mathrm{Ric}:M\to \mathbb R$ by
\[
\mathrm{Ric}(x):=\inf_{\substack{v\in T_xM\\ g(v,v)=1}} \mathrm{Ric}_x(v,v).
\]
Note that $\Ric\in\mathrm{Lip}_{\text{loc}}(M)$. For a constant $\gamma\geq0$, we say that \textit{$M$ satisfies $\lambda_1(-\gamma\Delta+\Ric)\geq0$}
if either of the two following equivalent conditions holds:
\begin{enumerate}[label={(\roman*)}, topsep=2pt, itemsep=1pt]
    \item for all $\varphi\in C^1_c(M)$ it holds 
    \begin{equation}\label{eqn:SpectralRicciCondition}
    \int_M\gamma|\D\varphi|^2+\Ric\cdot\varphi^2\geq0;
    \end{equation}
    \item there exist $\alpha\in (0,1)$ and $u\in C^{2,\alpha}(M)$ such that $u>0$, $-\gamma\Delta u+\Ric\cdot u\geq0$ on $M$, and $\langle \nabla u,\nu\rangle=0$ on $\partial M$. Here $\nu$ denotes the unit outward normal to the boundary. 
\end{enumerate}
The equivalence of the two conditions is readily seen on compact manifolds, and follows from the argument of \cite{Fischer-Colbrie-Schoen} on noncompact manifolds. The study of manifolds with Ricci lower bounds in the spectral sense has recently attracted substantial attention: see, e.g., \cite{APX24, AX24, CatinoMariMastroliaRoncoroni, HongWang, CS26}, and the references therein. This study has been connected to the study of stable minimal hypersurfaces, and in particular to the solution of the stable Bernstein problem in $\mathbb R^n$; we refer to \cite{ChodoshLiBernstein2, CLMS, Mazet, CabreCatinoMariMastroliaRoncoroni}.

In \cite{AX24}, the authors prove that, when $n\geq 3$, if $(M^n,g)$ is compact then
\begin{equation}\label{eqn:SharpP1}
\lambda_1\left(-\frac{n-1}{n-2}\Delta+\mathrm{Ric}\right)>0 \Rightarrow \pi_1(M)\, \text{is finite},
\end{equation}
and the constant $\frac{n-1}{n-2}$ is sharp for \eqref{eqn:SharpP1} to hold.
Moreover, in \cite{BourCarron} the authors prove that, when $n\geq 3$, if $(M^n,g)$ is compact then
\begin{equation}\label{eqn:SharpBetti}
\lambda_1\left(-\frac{n-1}{n-2}\Delta+\mathrm{Ric}\right)\geq 0 \Rightarrow b_1(M)\leq n.
\end{equation}
In \cite{AX25}, the authors prove that when $n\geq 3$ and $\gamma>\frac{n-1}{n-2}$, then the condition $\lambda_1(-\gamma\Delta+\mathrm{Ric})>0$ is stable under connected sums. Thus, one can observe that also in \eqref{eqn:SharpBetti} the constant $\frac{n-1}{n-2}$ is sharp for the property to hold. These results hint at the following natural question. 
\begin{question}\label{Question}
    Let $n\geq 2$. Define
    \[
\begin{aligned}
C(n):=\Bigl\{\gamma\geq 0:\;&\text{Any compact $M^n$ admitting a $C^\infty$ metric $g$ with $\lambda_{1}(-\gamma\Delta_g+\mathrm{Ric}_g)\geq 0$,}\\
&\text{also admits a $C^\infty$ metric $\tilde g$ such that $\mathrm{Ric}_{\tilde g}\geq 0$}\Bigr\}.
\end{aligned}
\]
Find the value of 
\[
\gamma(n):=\sup C(n).
\]
\end{question}
By using Gauss--Bonnet theorem, $\gamma(2)=+\infty$. In view of the results in \cite{AX24, AX25} explained above, we also have $\gamma(n)\leq \frac{n-1}{n-2}$ for every $n\geq 3$. Problems related to \cref{Question} with $\mathrm{Scal}$ instead of $\mathrm{Ric}$ have been considered in \cite[6.1.2]{FourLectures}, and, e.g., in \cite{LiMantoulidisFlexibility, HirschKazarasKhuriYiyueSpectralTorical}.

One of the aims of this paper is to settle a sharp boundary analogue of \cref{Question}
for compact manifolds with mean-convex boundary in dimensions $n\neq 4$;
see \cref{thm:MainThmIntro}. To the best of the authors' knowledge, \cref{Question} remains widely open in dimensions
$n\geq 5$.

\subsection{Main results}
The understanding of \cref{Question} in the setting of compact manifolds with mean-convex boundary rests on two key results. The first is a sharp spectral generalization of a rigidity theorem first proved
by Kasue \cite[Theorem B(1)]{Kasue83} (see also \cite{MR649619}): a complete manifold with $\Ric\geq0$ and disconnected
mean-convex boundary, at least one component of which is compact, must split
isometrically.  See also \cite{CrokeKleiner} for a generalization, and the recent related work \cite{CucinottaMondino}. We stress that in this paper $H_{\partial M}$ denotes the mean curvature with respect to the outward unit normal $\nu$ to the boundary $\partial M$.
\begin{theorem}\label{thmSplitting}
    Let $n\geq 2$. Let $(M^n,g)$ be a complete (possibly noncompact) manifold with nonempty boundary $\partial M$. Let $0\leq \gamma<4$ if $n=2$, or $0\leq \gamma<\frac{n-1}{n-2}$ if $n\geq 3$. Let us assume that:
    \begin{enumerate}
        \item There is $0<u\in C^{2,\alpha}(M)$ such that 
        \begin{equation}\label{eqn:TwoAssumptions}
        -\gamma\Delta u+u\mathrm{Ric}\geq 0\,\, \text{on $M$}, \qquad \text{and} \qquad H_{\partial M}+\gamma u^{-1}\langle \nabla u,\nu\rangle \geq 0\,\, \text{on $\partial M$}.
        \end{equation}
        \item $\partial M$ has at least two connected components, at least one of which is compact.
    \end{enumerate}
    Then $\mathrm{Ric}\geq 0$, $H_{\partial M}\geq 0$, and $(M^n,g)$ splits isometrically as $([0,L]\times \Sigma^{n-1},\mathrm{d}t^2+g_\Sigma)$, where $(\Sigma,g_\Sigma)$ is a compact Riemannian manifold with $\mathrm{Ric}_{g_\Sigma}\geq 0$, and $0<L<\infty$.
\end{theorem}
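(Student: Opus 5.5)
We would follow Kasue's strategy: minimize a weighted area functional among hypersurfaces separating a compact boundary component from the rest of the boundary, and then show that the minimizer forces a product structure. Let $\Sigma_0$ be a compact component of $\partial M$ and $\Sigma_1$ the union of the remaining components. We would consider the weighted area $\mathcal A_u(\Sigma)=\int_\Sigma u^{\gamma}\,d\mathcal H^{n-1}$; the exponent may need to be $u^{\gamma/(\ldots)}$ after tuning, but $u^\gamma$ is the natural candidate given the boundary condition $H+\gamma u^{-1}\partial_\nu u\ge0$. This functional is minimized in the class of hypersurfaces homologous to $\Sigma_0$. The boundary condition says precisely that every boundary component is weighted-mean-convex, i.e.\ $H_u:=H+\gamma u^{-1}\partial_\nu u\ge 0$, so $\partial M$ acts as a barrier for the weighted functional.

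The existence step runs as follows. If the first variation of $\mathcal A_u$ at $\Sigma_0$ has strictly negative sign somewhere, the maximum principle for the weighted problem lets us push into $M$. We would minimize in a compact collar region $\{d(\cdot,\Sigma_0)\le R\}$, using that $\Sigma_0$ is compact and that $\Sigma_1$ is a barrier. If instead no minimizer lies in the interior, the barriers force the minimizer to touch $\Sigma_0$ or $\Sigma_1$. In either case we obtain a weighted-minimizing hypersurface $\Sigma$, which is either interior or a boundary component. It is smooth away from a codimension-$7$ singular set, and we would handle singularities by the usual dimension reduction or restrict to $n\le7$ first. The cleaner alternative, which we would try first, is Kasue's approach: study the distance function $\rho=d(\cdot,\Sigma_0)$ together with the weighted Riccati inequality along normal geodesics. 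This is $1$-dimensional and avoids regularity issues.

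The key step is the weighted Riccati/stability computation. Along a unit-speed normal geodesic from $\Sigma_0$, let $m(t)$ be the mean curvature of the level sets of $\rho$. Then $m'\le -\Ric(\partial_t,\partial_t)-\frac{m^2}{n-1}$. We would combine this with the ODE for $\log u$, using $-\gamma\Delta u+\Ric\,u\ge0$ and the decomposition $\Delta u=u''+m u'+\Delta_{\Sigma_t}u$, and set $\phi=m+\gamma(\log u)'$. After integrating over the level sets (or working with the stability inequality of the minimizer and the test function $u^{-\gamma/2}$-type weights), one obtains a quadratic form in $(m,(\log u)')$ of the shape
\[
\tfrac{m^2}{n-1}-\gamma m\,w+\gamma w^2-\text{(cross terms)},\qquad w=(\log u)'.
\]
This form is nonnegative definite exactly when $\gamma<\frac{4(n-1)}{n+\ldots}$. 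The discriminant condition should reproduce $\gamma<\frac{n-1}{n-2}$ for $n\ge3$ and $\gamma<4$ for $n=2$; matching these constants is the main obstacle, and we expect it to require an optimized choice of the weight exponent together with the tangential terms $|\nabla_\Sigma u|^2$, handled by Cauchy--Schwarz. Granting this, $\phi$ is nonincreasing from $\phi(0)\ge0$, which is the weighted mean convexity of $\Sigma_0$ with outward normal $-\partial_t$. Meanwhile $\phi$ at the other boundary must be $\le0$, with signs reversed by orientation. Hence every inequality is an equality: $m\equiv0$, the second fundamental forms of the level sets vanish, $u$ is constant, and $\Ric(\partial_t,\partial_t)=0$.

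Finally, the equality case yields the splitting. If $u$ is constant, the conditions reduce to $\Ric\ge0$ and $H\ge0$. The level sets of $\rho$ are totally geodesic and $|\nabla\rho|=1$ with $\nabla^2\rho=0$, so the normal exponential map from $\Sigma_0$ is an isometry from $[0,L]\times\Sigma_0$ onto $M$. Here $L=d(\Sigma_0,\Sigma_1)<\infty$, because the Riccati comparison forces the geodesics to reach $\Sigma_1$ before focal points. Compactness of $\Sigma_0$ then gives that $\Sigma_1=\{L\}\times\Sigma_0$ is compact and that $M$ is compact. Taking $\Sigma=\Sigma_0$, the Gauss equation for a totally geodesic slice in a product gives $\Ric_{g_\Sigma}=\Ric_g|_{T\Sigma}\ge0$. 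One point still needs care: showing that $\rho$ is smooth, i.e.\ has no cut locus before $\Sigma_1$. We would argue this via a Calabi-type barrier argument in the weak sense, exactly as in Kasue's proof.
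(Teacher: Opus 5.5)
\textbf{There is a genuine gap.} It sits at the step you defer with ``granting this'', and tuning constants in a pointwise Riccati argument along $g$-normal geodesics of $\rho=d(\cdot,\Sigma_0)$ does not close it. Insert $\Ric(\partial_t,\partial_t)\geq\gamma u^{-1}\Delta u$ and $\Delta u=u''+mu'+\Delta_{\Sigma_t}u$ into the Riccati inequality. With $w=(\log u)'$ and $\phi=m+\gamma w$ this gives
\[
\phi'\leq-\Big(\frac{m^2}{n-1}+\gamma m w+\gamma w^2\Big)-\gamma u^{-1}\Delta_{\Sigma_t}u .
\]
\begin{itemize}
\item The coefficient $\gamma$ in $\phi$ is forced; any other choice leaves an uncontrolled $w'$.
\item The term $\Delta_{\Sigma_t}u$ has no sign pointwise. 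It becomes a signed gradient term only after integrating over level sets. That needs a smooth foliation, which fails past the cut locus, and it turns the problem into a hypersurface/$\mu$-bubble argument you have not set up.
\item Even after discarding that term, the quadratic form is nonnegative iff $\gamma\leq\frac{4}{n-1}$. This is the threshold of the interior spectral splitting theorems, and it is strictly below $\frac{n-1}{n-2}$ for $n\geq4$.
\item A sign issue: with $m$ computed with respect to $\partial_t$, the boundary hypotheses read $\phi(0)\leq0\leq\phi(L)$. With the signs you wrote, monotonicity forces nothing.
\end{itemize}

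The paper gets the sharp range from a different object: minimizers $\sigma$ of the weighted length $E(\beta)=\int_\beta u^\gamma\,\mathrm{d}\ell_g$ (geodesics of $u^{2\gamma}g$) from the compact component to the rest of $\partial M$. Summing the second variation over a parallel normal frame, the weight contributes the normal trace of $\nabla^2u$. Together with $u''$, this reassembles the full $\Delta u$ that the hypothesis controls. Substituting $\varphi=u^{-\gamma/(n-1)}\psi$ and integrating $u''$ by parts yields the coefficient $\frac{n-2}{n-1}\gamma^2-\gamma$ on $u^{-2}(u')^2$. The endpoint terms combine exactly into $H_{\partial M}+\gamma u^{-1}u_\nu$, so $\psi\equiv1$ gives $\nabla u=0$ along $\sigma$.

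\textbf{Two further ingredients are missing.}

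First, equality along one curve says nothing about $u$ elsewhere. Kasue's barrier/maximum-principle argument for the sum of the distances to the two boundary pieces needs the pointwise comparison $\Delta\rho\leq0$, i.e.\ $\Ric\geq0$, which is what has to be proved. The paper replaces it by Hong--Wang geodesic capturing:
\begin{itemize}
\item The set $\Omega$ of points lying on some $E$-minimizer is nonempty and closed.
\item If $p\notin\Omega$ with $d_g(p,\Omega)=2r$, lower $u$ near $p$ to $u_{r,t}$. This function is a strict supersolution on $B_{3r}(p)\setminus B_r(p)$ and preserves the boundary condition.
\item The new minimizers must meet $\overline{B_r(p)}$; otherwise the equality case contradicts strictness.
\item Letting $t\to0$ gives an $E$-minimizer through $\overline{B_r(p)}$, a contradiction.
\end{itemize}
Hence $\Omega=M$, $u$ is constant, and Kasue's theorem applies.

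Second, when $M$ is noncompact, minimizing sequences for $E$ can escape to infinity. Here $u$ may decay, so $u^{2\gamma}g$ need not be complete, and compactness of $\Sigma_0$ does not prevent this. The paper excludes it with \cref{lemfiniteenergyfintelength}. There, a ray from the compact component with infinite length, finite energy and minimizing initial segments is shown to violate the stability inequality via a cut-off. This is where the restriction on $\gamma$ (including $\gamma<4$ when $n=2$) is used a second time.
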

\begin{remark}\label{rem:Thm11}
    The following remarks are in order.
    \begin{enumerate}
        \item The fact that at least one connected component of $\partial M$ is compact is necessary in \cref{thmSplitting} when $n\geq 4$. Indeed, in $\mathbb R^4$ the minimal catenoid lies within two parallel hyperplanes. Thus, there is an open set $A$ in $\mathbb R^4$ bounded by a hyperplane and a minimal catenoid. Notice that $A$ satisfies the assumptions in  \cref{thmSplitting} (with $\gamma=0$ and $u\equiv 1$) but it does not split isometrically. 
        
        We remark that when $n\in\{2,3\}$, a manifold $M^n$ with at least two mean-convex boundary components (regardless of the compactness assumption) and $\mathrm{Ric}\geq 0$ always splits isometrically. When $n=2$ this is readily seen: one can show that there exists a geodesic line in $M$, and conclude using the splitting theorem. When $n=3$, it was recently proved in \cite[Corollary 3]{CucinottaMondino}. It would be interesting to understand if the compactness assumption in Item (2) of \cref{thmSplitting} can be dropped when $n\in\{2,3\}$. Since this question is not in the scope of this paper, we do not address it here.
        \item The thresholds $0
    \leq \gamma<4$ if $n=2$, and $0\leq \gamma<\frac{n-1}{n-2}$ if $n\geq 3$ are sharp for \cref{thmSplitting} to hold. See, respectively, the interesting hyperbolic example in \cref{prop:HyperbolicCounterexample}, and the example in \cref{eqn:Counterexample2} (inspired by \cite{AX24}).
    \item When $n=2$, if the manifold $(M^n,g)$ in \cref{thmSplitting} is compact, we obtain the same result with the weaker assumption $\gamma\geq 0$. In fact, the compactness of $M$ rules out the possible escaping at infinity of minimizing sequences in the proof of \cref{propNearbyMinimizer}, so that the same proof of \cref{propNearbyMinimizer} works with an arbitrary $\gamma$. Thus when $(M^n,g)$ is compact, arguing as in the proof of \cref{ThmRigid}, we get that, through every point $p\in M$, there is a minimizer of $E(\beta):=\int_\beta u^\gamma\mathrm{d}\ell_g$ among curves $\beta$ connecting a distinguished boundary component $\Sigma_1$ to $\partial M\setminus\Sigma_1$. Since we can assume $\gamma>0$ without loss of generality, by \cref{lemgradienvanishes} we get $|\nabla u|(p)=0$, and by the arbitrariness of $p$ we get that $u$ is constant, thereby reducing the proof to \cite{Kasue83}.
    \item \cref{thmSplitting} should be compared with the spectral splitting results of \cite{APX24} (see also \cite{CatinoMariMastroliaRoncoroni}) and \cite{HongWang}. We stress that, while for those results the sharp threshold was $0\leq \gamma<\frac{4}{n-1}$, in our case the sharp threshold is $0\leq \gamma < \frac{n-1}{n-2}$. 
    
    Roughly speaking, this is due to the following fact. In the presence of at least two boundary components, at least one of which (say $\Sigma_1$) is compact, a minimizing sequence of the energy $E(\beta):=\int_\beta u^\gamma\mathrm{d}\ell_g$ among curves connecting $\Sigma_1$ to $\partial M\setminus\Sigma_1$ cannot escape at infinity. Indeed, if it were to do so, we would find a curve $\sigma$ starting from $\Sigma_1$ with infinite $g$-length, finite $E$, such that every initial portion of $\sigma$ is $E$-minimizing among curves starting from $\Sigma_1$. Using a cut-off trick, this will contradict the stability inequality, see \cref{lemfiniteenergyfintelength}. Hence, we gain compactness of minimizers, so we can use the improved version of the stability inequality in \cref{lem:SecondVariationGeneral}, which gives information in the sharp range $0\leq \gamma<\frac{n-1}{n-2}$.
    \end{enumerate}
\end{remark}
\smallskip

It is known that if a compact manifold with boundary $(M^n,g)$ has $\mathrm{Ric}\geq 0$, and $H_{\partial M}\geq 0$, and at least one of the latter two inequalities is strict\footnote{Here and throughout, when we say that one inequality is \textit{strict},
we mean that it is strict at every point of its domain: on $M$ for the
interior inequality, and on $\partial M$ for the boundary inequality.}, then $\pi_1(M,\partial M)=0$: see \cite[Proposition 2.8]{FraserLi}, and \cite{LawsonUnknot}. Our second result is a sharp generalization of the latter property to manifolds with Ricci lower bounded in the spectral sense. We note, moreover, that in \cref{thm:Pi10} we do not require the manifold $M$ to be compact. It is enough that at least one of its boundary components is compact. 
\begin{theorem}\label{thm:Pi10}
    Let $n\geq 2$, and let $(M^n,g)$ be a complete (possibly noncompact) manifold with nonempty boundary $\partial M$. Assume that $\partial M$ has at least one compact boundary component.
    
    Let $\gamma\geq 0$. Let us assume that 
    \[
    0\leq \gamma < 4\,\, \text{if $n=2$}, \qquad \text{or} \qquad  0\leq \gamma\leq \frac{n-1}{n-2}\,\, \text{if $n\geq 3$}.
    \]
Moreover, assume there exists $0<u\in C^{2,\alpha}(M)$ such that the two inequalities in \eqref{eqn:TwoAssumptions} hold, and at least one of them is a strict inequality. 

Then $\pi_1(M,\partial M)=0$. Namely, $\partial M$ is connected, and $\iota_*:\pi_1(\partial M)\to\pi_1(M)$ is surjective.
\end{theorem}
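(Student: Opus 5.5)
The plan is to argue by contradiction and pass to a covering space, where the failure of $\pi_1(M,\partial M)=0$ produces a disconnected boundary. Fix a compact boundary component $\Sigma_1$. If $\pi_1(M,\partial M)\neq 0$, then either $\partial M$ is disconnected, or $\partial M$ is connected (hence equal to $\Sigma_1$) and $\iota_*\pi_1(\Sigma_1)\to\pi_1(M)$ is not surjective. In the second case, let $\tilde M\to M$ be the covering associated with the subgroup $\iota_*\pi_1(\Sigma_1)$. Then $\Sigma_1$ lifts isometrically to a compact component $\tilde\Sigma_1\subset\partial\tilde M$. Since the subgroup is proper, the covering has degree at least $2$, so the preimage of $\Sigma_1$ contains at least one further component. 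Thus $\tilde M$ is complete, has at least two boundary components, and at least one of them is compact. The lifted function $\tilde u$ satisfies the same two inequalities \eqref{eqn:TwoAssumptions}, with the same one of them strict, since both are local.

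In both cases we now have a complete manifold with at least two boundary components, one of them compact. We then minimize the weighted length $E(\beta)=\int_\beta u^\gamma\,\mathrm{d}\ell_g$ among curves joining the compact component $\Sigma_1$ to $\partial M\setminus\Sigma_1$. This is the same variational problem used for \cref{thmSplitting}, and we take over its existence theory (the proposition on nearby minimizers together with \cref{lemfiniteenergyfintelength}). These rule out escape to infinity in the range $\gamma<4$ for $n=2$ and $\gamma<\frac{n-1}{n-2}$ for $n\ge3$, and produce a minimizing curve $\sigma$ of finite length, orthogonal to $\partial M$ at both ends. Applying the second-variation inequality of \cref{lem:SecondVariationGeneral} along $\sigma$ gives an integral inequality. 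In this inequality, the interior term $-\gamma\Delta u+u\Ric$ and the boundary terms $H_{\partial M}+\gamma u^{-1}\langle\nabla u,\nu\rangle$ at the two endpoints appear with favorable signs, and the remainder is nonnegative exactly in this range of $\gamma$. If one of the assumed inequalities is strict everywhere, the inequality becomes strictly negative, which is a contradiction. In the strict subrange this is just a strict version of the argument for \cref{thmSplitting}. Alternatively, the splitting conclusion of \cref{thmSplitting} itself gives $\Ric\ge0$ and $H_{\partial M}\ge0$ with equality along the $t$-direction and on the boundary slices. The equality analysis should then force $-\gamma\Delta u+u\Ric\equiv0$ along the minimizer and equality in the boundary condition at its endpoints, contradicting strictness.

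The main obstacle is the endpoint $\gamma=\frac{n-1}{n-2}$ for $n\ge3$, which \cref{thmSplitting} excludes. There, the coercive remainder in the stability inequality degenerates, and \cref{lemfiniteenergyfintelength} may no longer prevent minimizing sequences from escaping to infinity. My first attempt would be a perturbation. Strictness of one inequality, combined with compactness near $\Sigma_1$ or in the relevant region, should give some slack. Replacing $\gamma$ by a slightly smaller $\gamma'$, or $u$ by $u^{\gamma/\gamma'}$ rescaled appropriately, should preserve the hypotheses up to an error absorbed by that slack, reducing to the open range. If this fails globally because $M$ is noncompact, I would instead run the second-variation argument directly at $\gamma=\frac{n-1}{n-2}$ for minimizers that exist. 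Escape to infinity would then be handled by showing that an escaping ray with finite $E$, each of whose initial segments is minimizing, still contradicts stability once the strict inequality is used: a cutoff test function gains a definite positive contribution from the strict term near the start of the ray.
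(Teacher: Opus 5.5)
Your proposal is correct and is essentially the paper's proof: when $\partial M$ is connected you pass to the covering of \cref{lem:Galloway}, take an $E$-minimizer from the compact component $\Sigma_1$ to $\partial M\setminus\Sigma_1$, and contradict \cref{lem:SecondVariationGeneral} with $\psi\equiv 1$ using the strict inequality. The endpoint $\gamma=\frac{n-1}{n-2}$ is not an extra obstacle: \cref{lemfiniteenergyfintelength} and \cref{lem:existenceofminimizers} are already stated for $\gamma\leq\frac{n-1}{n-2}$ under the strictness assumption, and their proof is exactly your second fallback (the cutoff error goes to zero via H\"older and $E(\sigma)<\infty$ for $n>3$, or via long cutoffs for $n=3$, while the strict term leaves a fixed positive contribution); you can drop the perturbation idea, which in fact gains nothing, because $w=u^{\gamma/\gamma'}$ with exponent $\gamma'$ gives the same functional $\int u^\gamma\,\mathrm{d}\ell_g$ and literally the same stability inequality.
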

\begin{remark}
    The following remarks are in order.
    \begin{enumerate}
    \item The fact that at least one connected component of $\partial M$ is compact is necessary in \cref{thm:Pi10} when $n\geq 4$. Indeed, in $\mathbb R^4$ 
there are smooth domains whose boundary components are
strictly mean-convex but whose mutual distance is zero.
    \item The threshold
    $0
    \leq \gamma<4$ if $n=2$, and 
    $0\leq \gamma\leq \frac{n-1}{n-2}$ if $n\geq 3$ are sharp for \cref{thm:Pi10} to hold. See, respectively, the examples in \cref{prop:HyperbolicCounterexample}, and in \cref{mainthmsharp} (inspired by \cite{AX25}).
    \item When $n=2$, if the manifold $(M^n,g)$ in \cref{thm:Pi10} is compact, and $\gamma\geq 0$, we can always conclude that $M$ is homeomorphic to a disk, and thus $\pi_1(M,\partial M)=0$. See the argument at the beginning of the proof of the forthcoming \cref{thm:MainThmIntro}.
    \end{enumerate}
\end{remark}

A deep result of Lawson--Michelsohn \cite[Theorem 1.1]{LawsonMichelsohn} asserts that if $(M^n,g)$ is a compact manifold with nonempty boundary, and $n\neq 4$, then $M^n$ admits a metric with $\mathrm{Sect}>0$ and $H_{\partial M}>0$ if and only if $\pi_1(M,\partial M)=0$ if and only if it admits a metric with $\mathrm{Ric}>0$ and $H_{\partial M}>0$. 
Hence, as a consequence of \cref{thm:Pi10} and \cite[Theorem 1.1]{LawsonMichelsohn}, we have the following answer to a boundary analogue of \cref{Question}, with the sharp constant threshold for $\gamma$.

\begin{theorem}\label{thm:MainThmIntro}
    Let $n\geq 2$ and $n\neq 4$. Let $\gamma\geq 0$ if $n=2$, or $0\leq \gamma\leq \frac{n-1}{n-2}$ if $n\geq 3$. 
    
    Let $(M^n,g)$ be a compact manifold with nonempty  boundary $\partial M$. Let $0<u\in C^{2,\alpha}(M)$ such that
    \begin{equation}\label{eqn:AssumLM}
    -\gamma\Delta u + u\mathrm{Ric}\geq 0\,\, \text{on $M$}, \qquad \text{and} \qquad H_{\partial M} + \gamma u^{-1}\langle\nabla u,\nu\rangle\geq 0\,\, \text{on $\partial M$},
    \end{equation}
    and at least one of these two inequalities is strict. Then $M^n$ admits a metric $\tilde g$ such that $\mathrm{Sect}_{\tilde g}>0$, and $H_{\partial M,\tilde g}>0$.
\end{theorem}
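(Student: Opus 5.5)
The proof splits into the case $n\geq 3$ and the case $n=2$.

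\textbf{Case $n\geq 3$, $n\neq 4$.} Here the argument is a direct combination of earlier results. A compact manifold is complete, and its boundary is nonempty with finitely many components, all of them compact. So the hypotheses of \cref{thm:Pi10} are met with $0\leq\gamma\leq\frac{n-1}{n-2}$ and with one of the inequalities in \eqref{eqn:AssumLM} strict. We conclude that $\pi_1(M,\partial M)=0$. Since $n\neq 4$, \cite[Theorem 1.1]{LawsonMichelsohn} then gives a metric $\tilde g$ with $\mathrm{Sect}_{\tilde g}>0$ and $H_{\partial M,\tilde g}>0$.

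\textbf{Case $n=2$.} Here \cref{thm:Pi10} only applies for $\gamma<4$, but we want every $\gamma\geq0$. I would instead show directly that $M$ is a disk, using Gauss--Bonnet. In dimension two $\mathrm{Ric}=K$ and $H_{\partial M}=k_g$, the geodesic curvature.

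Divide the interior inequality by $u$ and integrate. Using
\[
u^{-1}\Delta u=\Delta\log u+|\nabla\log u|^2,
\]
we get
\[
\int_M K\geq \gamma\int_M \Delta \log u+\gamma\int_M|\nabla \log u|^2 .
\]
By the divergence theorem, $\gamma\int_M\Delta\log u=\gamma\int_{\partial M}u^{-1}\langle\nabla u,\nu\rangle$. The boundary inequality gives $\gamma\int_{\partial M}u^{-1}\langle\nabla u,\nu\rangle\geq-\int_{\partial M}k_g$. Combining,
\[
2\pi\chi(M)=\int_M K+\int_{\partial M}k_g\geq\gamma\int_M|\nabla\log u|^2\geq 0.
\]
Now use strictness. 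If the interior inequality is strict, the first integrated inequality is strict; if the boundary one is strict, the second is. Either way $\chi(M)>0$. A compact surface with nonempty boundary and $\chi>0$ is a disk. A disk admits a metric with $\mathrm{Sect}>0$ and strictly convex boundary, for instance a spherical cap smaller than a hemisphere. This also recovers $\pi_1(M,\partial M)=0$ for every $\gamma\geq0$ when $n=2$.

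The only delicate point is bookkeeping in the $n=2$ case: the sign conventions for $\nu$ and $H_{\partial M}$ must be set so that Gauss--Bonnet reads $\int_M K+\int_{\partial M}k_g=2\pi\chi(M)$ with $k_g=H_{\partial M}$ under the outward-normal convention. One must also check that the strictness of either inequality really propagates to a strict inequality for $\chi(M)$, which it does because the strict inequality is assumed to hold at every point. The $n\geq3$ case is a formal combination of \cref{thm:Pi10} with \cite{LawsonMichelsohn}.
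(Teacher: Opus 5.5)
Your proposal is correct and matches the paper's proof essentially verbatim: for $n\geq 3$ you combine \cref{thm:Pi10} with \cite[Theorem 1.1]{LawsonMichelsohn}, and for $n=2$ you integrate $\mathrm{Sect}\geq \gamma u^{-1}\Delta u$ and use the boundary inequality together with Gauss--Bonnet to get $\chi(M)>0$, hence $M$ is a disk. One small addition: for $n=3$, the Lawson--Michelsohn theorem assumes there are no fake $3$-cells, and this assumption is now removed by the Poincar\'e conjecture.
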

\begin{proof}[Proof of \cref{thm:MainThmIntro}]
Let us first deal with the case \(n=2\). Let \((M^2,g)\) be a compact surface
with nonempty boundary, and let \(\gamma\geq 0\). 
Dividing the first inequality of \eqref{eqn:AssumLM} by \(u\), we obtain
\[
\mathrm{Sect}\geq \gamma u^{-1}\Delta u .
\]
Integrating by parts gives
\[
\int_M \mathrm{Sect}
\geq
\gamma\int_M u^{-1}\Delta u
=
\gamma\int_{\partial M} u^{-1}\langle\nabla u,\nu\rangle
+
\gamma\int_M u^{-2}|\nabla u|^2 .
\]
Therefore, using the boundary assumption in \eqref{eqn:AssumLM}, and the Gauss--Bonnet theorem, we find
\[
2\pi\chi(M)=\int_M \mathrm{Sect}+\int_{\partial M}H_{\partial M}
\geq
\int_{\partial M}
\left(
H_{\partial M}+\gamma u^{-1}\langle\nabla u,\nu\rangle
\right)
+
\gamma\int_M u^{-2}|\nabla u|^2\geq 0.
\]
Moreover, if at least one of the inequalities in
\eqref{eqn:AssumLM} is strict, then we get \(\chi(M)>0\). Since \(M\) has nonempty boundary, it follows that
\(M\) is homeomorphic to the disk. In particular, \(M\) admits a metric with
\(\mathrm{Sect}>0\) and strictly mean-convex boundary \(H_{\partial M}>0\).

Let us now assume $n\geq 3$. Thus, by \cref{thm:Pi10}, we have $\pi_1(M,\partial M)=0$. Hence, by \cite[Theorem 1.1]{LawsonMichelsohn}, for $n \neq 4$, $M^n$ admits a metric with $\mathrm{Sect}>0$ and $H_{\partial M}>0$, as desired.
\end{proof}
\begin{remark}
The following remarks are in order.
\begin{enumerate}
    \item When $n\geq 3$, the threshold $0\leq \gamma\leq \frac{n-1}{n-2}$ in \cref{thm:MainThmIntro} is sharp in view of the examples constructed in \cref{mainthmsharp}. 
    
    \item The constraint $n\neq 4$ comes from the fact that it is not known whether \cite[Proposition 5.3]{LawsonMichelsohn} is true in dimension $n=4$. Notice that in dimension $n=3$, \cite[Theorem 1.1]{LawsonMichelsohn} is assuming that there are no fake $3$-cells. With the solution of the Poincaré conjecture in dimension $3$, this assumption can be removed.
    \item If $\lambda_1(-\gamma\Delta+\mathrm{Ric})\geq 0$, we have that there is $0<u\in C^{2,\alpha}(M)$ such that $-\gamma\Delta u + u\mathrm{Ric}\geq 0$ and $\langle \nabla u,\nu \rangle=0$ on $\partial M$. Thus if $\lambda_1(-\gamma\Delta+\mathrm{Ric})\geq 0$ and $H_{\partial M}\geq 0$ hold, and at least one is a strict inequality, we have \eqref{eqn:AssumLM}, where at least one is a strict inequality. Hence, \cref{thm:MainThmIntro} implies that, in the appropriate sharp range of $\gamma$, if $\lambda_1(-\gamma\Delta+\mathrm{Ric})\geq 0$ and $H_{\partial M}\geq 0$ hold, and at least one is a strict inequality, then $M$ admits a metric such that $\mathrm{Sect}>0$ and $H_{\partial M}>0$, addressing the (mean-convex) boundary analogue of \cref{Question}.
    \item When $n = 2$, the assumptions \eqref{eqn:AssumLM} (without asking that either of them be a strict inequality) imply that $M$ is either a disk, an annulus, or a M\"obius strip. This follows directly from the fact that proof of \cref{thm:MainThmIntro} in that case shows that $\chi(M)\geq 0$. 
    
    When $n\geq 3$, $n\neq 4$, $0\leq \gamma<\frac{n-1}{n-2}$, and \eqref{eqn:AssumLM} are in force (without asking either of them is a strict inequality) we have two cases. Either $\pi_1(M,\partial M)=0$, and thus $M^n$ admits a metric with $\mathrm{Sect}>0$, and $H_{\partial M}>0$; or there exists a compact Riemannian manifold $(\Sigma^{n-1},g_\Sigma)$, with $\mathrm{Ric}_{g_\Sigma}\geq 0$, and a Riemannian cover $\pi:([0,L]\times\Sigma,\mathrm{d}t^2+g_{\Sigma})\to (M^n,g)$ with $\mathrm{deg}(\pi)\leq 2$, and $0<L<\infty$. The latter rigidity comes from \cref{thmSplitting}: see the proof of \cref{thm:RigidityOrPi1Zero}. Finally, when $\gamma=\frac{n-1}{n-2}$ and \eqref{eqn:AssumLM} are in force, we cannot in general deduce any of the previous conclusions: see the examples in \cref{eqn:Counterexample2}.
    \end{enumerate}
    \end{remark}
    
\smallskip
Finally, as a corollary of \cref{thmSplitting} and \cref{thm:Pi10} we have the following sharp result, which encompasses the cases discussed above. 
\begin{corollary}\label{thm:RigidityOrPi1Zero}
    Let $n\geq 2$, $\gamma\geq 0$, and let $(M^n,g)$ be a complete (possibly noncompact) manifold with nonempty boundary $\partial M$. Assume that $\partial M$ has at least one compact boundary component.

    Let us assume that
    \begin{equation}\label{eqn:ThresholdGamma}
    0\leq \gamma < 4\,\, \text{if $n=2$}, \qquad \text{or} \qquad  0\leq \gamma\leq \frac{n-1}{n-2}\,\, \text{if $n\geq 3$};
    \end{equation}
    and that there is $0<u\in C^{2,\alpha}(M)$ such that 
        \begin{equation}\label{eqn:TwoAssumptionsAgain}
        -\gamma\Delta u+u\mathrm{Ric}\geq 0\,\, \text{on $M$}, \qquad \text{and} \qquad H_{\partial M}+\gamma u^{-1}\langle \nabla u,\nu\rangle \geq 0\,\, \text{on $\partial M$}.
        \end{equation}
    When $n\geq3$, assume in addition that either
$\gamma<\frac{n-1}{n-2}$, or at least one of the two inequalities in
\eqref{eqn:TwoAssumptionsAgain} is strict. Then at least one of the following holds:
\begin{enumerate}
    \item there exists a compact Riemannian manifold $(\Sigma^{n-1},g_\Sigma)$, with $\mathrm{Ric}_{g_\Sigma}\geq 0$, and a Riemannian cover $\pi:([0,L]\times\Sigma,\mathrm{d}t^2+g_{\Sigma})\to (M^n,g)$ with $\mathrm{deg}(\pi)\leq 2$, and $0<L<\infty$;
    \item $\pi_1(M,\partial M)=0$. Namely, $\partial M$ is connected, and $\iota_*:\pi_1(\partial M)\to\pi_1(M)$ is surjective.
    \end{enumerate}
\end{corollary}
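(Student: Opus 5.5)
We split according to whether at least one of the two inequalities in \eqref{eqn:TwoAssumptionsAgain} is strict. If one of them is strict, then \eqref{eqn:ThresholdGamma} is exactly the range of \cref{thm:Pi10}. Its remaining hypotheses (completeness, nonempty boundary, a compact boundary component) are assumed, so \cref{thm:Pi10} gives alternative (2) directly. It remains to treat the case where neither inequality is assumed strict. By the extra hypothesis for $n\geq 3$, we then have $0\leq\gamma<4$ if $n=2$ and $0\leq\gamma<\frac{n-1}{n-2}$ if $n\geq3$, which is precisely the range of \cref{thmSplitting}. Assume that (2) fails, i.e. $\pi_1(M,\partial M)\neq 0$. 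Then either $\partial M$ is disconnected, or $\partial M$ is connected and $\iota_*:\pi_1(\partial M)\to\pi_1(M)$ is not surjective.

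\emph{Disconnected boundary.} Suppose $\partial M$ has at least two components. One of them is compact by assumption, so \cref{thmSplitting} applies to $(M,g)$ with the same $u$. Hence $M$ is isometric to $[0,L]\times\Sigma$ with $\Sigma$ compact, $\mathrm{Ric}_{g_\Sigma}\geq0$ and $0<L<\infty$. This is (1) with $\pi$ the identity, so $\deg\pi=1$.

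\emph{Connected boundary, $\iota_*$ not surjective.} Here $\partial M$ is connected, hence compact. Let $H:=\iota_*\pi_1(\partial M)$, which is a proper subgroup of $\pi_1(M)$, and let $p:\hat M\to M$ be the covering corresponding to $H$. Equip $\hat M$ with $\hat g=p^*g$ and $\hat u=u\circ p$. Both inequalities in \eqref{eqn:TwoAssumptionsAgain} are local, so they lift to $\hat M$, and $\hat M$ is complete. Choose a basepoint on $\partial M$ and a lift of it. The boundary component $\hat\Sigma_0$ of $\hat M$ through this lift maps homeomorphically onto $\partial M$: the covering $\hat\Sigma_0\to\partial M$ corresponds to the subgroup $\iota_*^{-1}(H)=\pi_1(\partial M)$, so it has degree one. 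In particular $\hat\Sigma_0$ is compact.

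The components of $p^{-1}(\partial M)$ correspond to the double cosets $H\backslash\pi_1(M)/H$. Since $H\neq\pi_1(M)$, there are at least two such double cosets, so $\partial\hat M$ has at least two components, one of which, $\hat\Sigma_0$, is compact. Applying \cref{thmSplitting} to $\hat M$ gives $\hat M\cong[0,L]\times\Sigma$ with $\Sigma$ compact. Therefore $\partial\hat M$ has exactly two components and $\hat M$ is compact. Since $\hat M$ is compact, $p$ is a finite cover, and its degree equals the number of preimages of a boundary point. The fibre over a basepoint in $\partial M$ meets $\hat\Sigma_0$ in exactly one point, because $\hat\Sigma_0\to\partial M$ is a homeomorphism. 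The only other boundary component is the second copy $\{L\}\times\Sigma$, which is isometric to $\{0\}\times\Sigma\cong\partial M$ and covers $\partial M$ with some degree $k\geq1$.

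\emph{Main obstacle: showing $k=1$, so that $\deg p\leq 2$.} The fibre $p^{-1}(x)$ of a point $x\in\partial M$ is in bijection with $H\backslash\pi_1(M)$. The points of the fibre lying on a given boundary component form the orbit of one double coset under the right $H$-action. The component $\hat\Sigma_0$ contributes the trivial coset alone. The other component is a single $H$-orbit of size $k$, and it consists of all remaining cosets. I would try to force $k=1$ with a volume or topological argument. One route compares the two ends: $\hat M\to M$ has a product structure, so the cosets on $\{L\}\times\Sigma$ must be permuted transitively by $H$. Flowing along the $t$-direction from $\{0\}\times\Sigma$ gives a bijection to $\{L\}\times\Sigma$ that is compatible with $p$ on fibres up to this action. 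A cleaner route is to note that a simple counting argument of this kind should give $\deg p=1+k$ with $k=[H:H\cap gHg^{-1}]$. Alternatively, the deck involution $t\mapsto L-t$ should yield a normal cover of degree $2$. If this step fails, one can instead pass to the intermediate quotient of $[0,L]\times\Sigma$ and conclude as in (1) with $\deg\pi\leq2$.
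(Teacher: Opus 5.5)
Your reduction coincides with the paper's. You use \cref{thm:Pi10} when one inequality is strict, \cref{thmSplitting} directly when $\partial M$ is disconnected, and otherwise the cover $p\colon\hat M\to M$ associated with $H=\iota_*\pi_1(\partial M)$ (this is the cover of \cref{lem:Galloway}) followed by \cref{thmSplitting}. Your double-coset check, showing that $\partial\hat M$ has at least two components and that one of them, $\hat\Sigma_0$, maps homeomorphically onto $\partial M$, is correct.

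The gap is the final step, which you rightly flag but never prove. Without $k=1$ you do not get $\deg p\leq 2$, and none of your tentative routes yields it:
\begin{itemize}
\item The formula $k=[H:H\cap gHg^{-1}]$ only renames $k$.
\item The ``deck involution $t\mapsto L-t$'' cannot be right as stated. The map $(t,x)\mapsto(L-t,x)$ fixes $\{L/2\}\times\Sigma$, while a nontrivial deck transformation has no fixed points. Moreover, any deck transformation exchanging the two boundary components already forces $k=1$, so invoking one is circular.
\item Passing to an intermediate quotient of $[0,L]\times\Sigma$ does not give a cover of $M$ itself by a product, which is what alternative (1) requires.
\end{itemize}

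The missing step is exactly the volume comparison you mention but do not carry out. It is also what the paper's remark (that one boundary component maps isometrically onto $\partial M$) is implicitly used for.
\begin{itemize}
\item Since $p$ is a local isometry, $p|_{\hat\Sigma_0}$ is a bijective local isometry for the induced metrics, hence an isometry. So $\mathrm{vol}_{n-1}(\partial M)=\mathrm{vol}(\Sigma,g_\Sigma)$.
\item The other component $\{L\}\times\Sigma$ carries the same induced metric $g_\Sigma$. The map $p|_{\{L\}\times\Sigma}\colon\{L\}\times\Sigma\to\partial M$ is a $k$-sheeted Riemannian covering, so $\mathrm{vol}(\Sigma,g_\Sigma)=k\,\mathrm{vol}_{n-1}(\partial M)$.
\item Hence $k=1$ and $\deg p=2$.
\end{itemize}

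Your coset route can also be closed. Fix $x_0\in\partial M$, take $x\in p^{-1}(x_0)$ on $\{L\}\times\Sigma$, and set $H'=p_*\pi_1(\hat M,x)$, a conjugate of $H$. The product structure makes $\{L\}\times\Sigma\hookrightarrow\hat M$ surjective on $\pi_1$. Since $p$ maps $\{L\}\times\Sigma$ into $\partial M$, this gives $H'\subseteq\iota_*\pi_1(\partial M,x_0)=H$. Because $H'$ is conjugate to $H$ and $[\pi_1(M):H]=\deg p<\infty$, the two have the same finite index, which forces $H'=H$. The number of fibre points on $\{L\}\times\Sigma$, i.e.\ the $H$-orbit of $x$, is then $k=[H:H\cap H']=1$.

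With either argument inserted, your proof is complete.
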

\begin{remark}
An alternative approach to spectral Ricci lower bounds is through
$\mu$-bubbles, as in several recent works on related problems. In this paper we instead use weighted length minimizers for the conformal metric $\widetilde g=u^{2\gamma}g$. This has two advantages. First, it keeps the relevant variational objects one-dimensional,
and hence avoids additional regularity issues in high dimensions. Second, the one-dimensional framework
is best suited to prove \cref{lemfiniteenergyfintelength}, which is a key tool to prove the results in the generality of the present paper; namely, for possibly noncompact manifolds with disconnected boundary, with at least one compact boundary component. It would be interesting to understand whether our results, in this level of generality, can be
obtained by a $\mu$-bubble argument. In particular, it seems that such an
approach would require additional input to handle the noncompactness and possible singularity formation.
\end{remark}
\smallskip

\textbf{Structure of the paper.} In \cref{sec:Preliminaries} we carry out the detailed computations about the first and second variation of the functional $E(\beta):=\int_\beta u^\gamma\mathrm{d}\ell_g$. One of the main contributions of this paper is \cref{lemfiniteenergyfintelength}, which is the starting point to implement the strategy discussed in Item (4) of \cref{rem:Thm11}. In \cref{sec:Sharp} we discuss the examples that show that the thresholds on $\gamma$ in our statements are optimal. In \cref{sec:Rigid} we give the proof of \cref{thmSplitting}. The main new technical contribution is the implementation of the geodesic capturing technique in \cref{propNearbyMinimizer} in our setting. We need to carefully rule out, as explained above, the escape at infinity of minimizing sequences of the functional $E(\beta):=\int_\beta u^\gamma\mathrm{d}\ell_g$. In \cref{sec:proofs} we give the proofs of \cref{thm:Pi10} and \cref{thm:RigidityOrPi1Zero}. Finally, in the appendix (\cref{sec:app}) we show two further applications of the second variation computations in \cref{sec:Secondvariation}. First, we give a spectral generalization of \cite{MartinLiASharpComparison}, see \cref{prop:WeightedInradius}. Second, we give a slight improvement of the diameter estimate in \cite{AX24}, see \cref{rem:AX24}.
\smallskip

\textbf{Acknowledgments.} This project has received funding from the European Research Council (ERC) under
the European Union’s Horizon 2020 research and innovation programme (grant agreement
No. 947923). G.A. has been partially supported by the NSF DMS Grant No. 2550590. G.A. thanks D. Stern and G. Catino for discussions concerning
\cref{Question}, from which this work stems. The authors thank K. Xu for useful help and comments at the early stages of this project. The authors acknowledge the use of ChatGPT 5.2 for assistance with drafting routine computations, algebraic simplifications, and \TeX\,formatting. All mathematical arguments, computations, and conclusions were verified by the authors.

\section{Preliminaries}\label{sec:Preliminaries}

We will recall the first and second variation formulae for weighted geodesics. In this section, $(M^n,g)$ is a (possibly noncompact) manifold with nonempty boundary.

\subsection{First variation of weighted geodesics}\label{sec:FirstVariation}
    Let us assume $\partial M$ is not connected. Let us write $\partial M=C_1\sqcup C_2$, where $C_1,C_2$ are nonempty unions of connected components of $\partial M$. 
    Let $0<u\in C^{2,\alpha}(M)$, and
     $\sigma:[0,\ell]\to M$ be a curve parameterized by $g$-arclength. 
     
     We assume $\sigma(0)\in C_1$, $\sigma(\ell)\in C_2$, and $\sigma((0,\ell))\subset M\setminus\partial M$. Moreover, we assume that $\sigma$ is $\tilde g$-length minimizing among compact curves $\beta$ connecting $C_1$ to $C_2$, where $\tilde g=u^{2\gamma}g$. This is equivalent to saying that $\sigma$ is a minimizer of the functional $E(\beta):=\int_\beta u^\gamma \mathrm{d}\ell_g$, where $\mathrm{d}\ell_g$ is the $g$-length element, among compact curves $\beta$ connecting $C_1$ to $C_2$.  Notice that, under these assumptions, $\sigma$ must necessarily be $C^{3,\alpha}$-regular.
     \smallskip
    
    Let $e$ be a variational field along $\sigma$. Let $\epsilon$ be the variational parameter. Then, the first variation for the energy $E$ at $e$ is
\[\delta E(e)=\frac {\mathrm{d}}{\mathrm{d}\epsilon}\int_0^\ell u^\gamma\metric{\sigma'}{\sigma'}^{1/2}\,\mathrm{d}t=\int_0^\ell u^\gamma\metric{\sigma'}{\sigma'}^{1/2}\Big[\gamma\metric{u^{-1}\D u}{e}+\frac{\metric{\nabla_{\sigma'}e}{\sigma'}}{\metric{\sigma'}{\sigma'}}\Big]\,\mathrm{d}t.\]
From now on, we will tacitly assume that all the terms in the integrals are evaluated at $\sigma(t)$. At the initial time $\epsilon=0$, we get that for every variational vector field $e\perp\sigma'$ along $\sigma$ we have
\[
    0=\delta E(e)|_{\epsilon=0}=\int_0^\ell u^{\gamma}\Big[\gamma\metric{u^{-1}\D u}{e}-\metric{e}{\D_{\sigma'}\sigma'}\Big]\mathrm{d}t,
\]
and thus 
\begin{equation}\label{critical}
    \D_{\sigma'}\sigma'=\gamma u^{-1}\D^\perp u.
\end{equation}
Choosing arbitrary variations $e$ that are tangents to $\partial M$ at the endpoints of $\sigma$, we also obtain that $\sigma$ meets $\partial M$ orthogonally. Compare with \cite[Lemma 2.3]{HongWang}.

\subsection{Second variation of weighted geodesics}\label{sec:Secondvariation}
  Let us assume we are in the same setting as \cref{sec:FirstVariation}. If $e$ is a variational field along $\sigma$, the second variation is:
\begin{equation}\label{eqn:OriginalSecondVariation}
\begin{aligned}
    \delta^2E(e) &= \frac{\mathrm{d}^2}{\mathrm{d}\epsilon^2}\int_0^\ell u^\gamma\metric{\sigma'}{\sigma'}^{1/2}\,\mathrm{d}t \\
    &= \frac {\mathrm{d}}{\mathrm{d}\epsilon}\int_0^\ell\Big[\gamma u^{\gamma-1}\metric{\D u}{e}\metric{\sigma'}{\sigma'}^{1/2}+u^\gamma\frac{\metric{\nabla_{\sigma'}e}{\sigma'}}{\metric{\sigma'}{\sigma'}^{1/2}}\Big]\,\mathrm{d}t \\
    &= \int_0^\ell\Big[\gamma(\gamma-1)u^{\gamma-2}\metric{\D u}{e}^2\metric{\sigma'}{\sigma'}^{1/2}+\gamma u^{\gamma-1}\D^2 u(e,e)\metric{\sigma'}{\sigma'}^{1/2} \\
    &\qquad +\gamma u^{\gamma-1}\metric{\D u}{\D_ee}\metric{\sigma'}{\sigma'}^{1/2}+{2\gamma u^{\gamma-1}\metric{\D u}{e}\frac{\metric{\nabla_{\sigma'}e}{\sigma'}}{\metric{\sigma'}{\sigma'}^{1/2}}} \\
    &\qquad +u^\gamma\frac{\metric{\D_e\D_{\sigma'}e}{\sigma'}+\metric{\nabla_{\sigma'}e}{\nabla_{\sigma'}e}}{\metric{\sigma'}{\sigma'}^{1/2}}-u^\gamma\frac{\metric{\nabla_{\sigma'}e}{\sigma'}^2}{\metric{\sigma'}{\sigma'}^{3/2}}\Big]\,\mathrm{d}t.
\end{aligned}
\end{equation}
When $e\perp\sigma'$, notice that $\metric{\nabla_{\sigma'}e}{\sigma'}=-\metric{e}{\D_{\sigma'}\sigma'}=-\gamma u^{-1}\metric{\D u}{e}$, by \eqref{critical}.

When $e\in C_c^\infty((0,\ell);\sigma^*TM)$ and $e\perp\sigma'$, we can take the exponential (geodesic) variation, so that $\D_ee$ vanishes along $\sigma$ at $\varepsilon=0$. In this case, using also the geodesic equation \eqref{critical}, and the fact that $\sigma$ is parameterized by $g$-arclength, we get
\begin{equation}\label{eqn:Test}
\begin{aligned}
    0 &\leq \int_0^\ell u^\gamma\Big[\gamma(\gamma-1)u^{-2}\metric{\D u}{e}^2+\gamma u^{-1}\D^2 u(e,e)+\gamma u^{-1}\metric{\D u}{\D_ee} \\
    &\qquad -2\gamma^2 u^{-2}\metric{\D u}{e}^2-R(e,\sigma',\sigma',e)+\metric{\D_{\sigma'}\D_ee}{\sigma'}+|\nabla_{\sigma'}e|^2-\gamma^2u^{-2}\metric{\D u}{e}^2\Big]\,\mathrm{d}t \\
    &= \int_0^\ell u^\gamma\Big[(-2\gamma^2-\gamma)u^{-2}\metric{\D u}{e}^2+\gamma u^{-1}\D^2 u(e,e)-R(e,\sigma',\sigma',e)+|\nabla_{\sigma'}e|^2\Big]\,\mathrm{d}t.
\end{aligned}
\end{equation}
Let us now choose a relatively parallel orthonormal frame \(e_1,\ldots,e_{n-1}\) of the normal
bundle \((\sigma')^\perp\) along \(\sigma\), i.e.
\begin{equation}\label{eqn:ParallelOrthonormalFrame}
e_i\perp \sigma',\qquad \langle e_i,e_j\rangle=\delta_{ij},\qquad
(\nabla_{\sigma'}e_i)^\perp=0.
\end{equation}
Let $\varphi\in C^\infty([0,\ell])$. Using \eqref{eqn:ParallelOrthonormalFrame},
\begin{equation}\label{eqn:Trace1}
\begin{aligned}
\sum_{i=1}^{n-1} |\nabla_{\sigma'}(\varphi e_i)|^2 &= (n-1)(\varphi')^2+2\varphi\varphi'\sum_{i=1}^{n-1}\langle e_i,\nabla_{\sigma'}e_i\rangle + \varphi^2\sum_{i=1}^{n-1}|\nabla_{\sigma'}e_i|^2 \\
&=(n-1)(\varphi')^2+\varphi^2 \sum_{i=1}^{n-1}|\langle \nabla_{\sigma'}e_i,\sigma'\rangle \sigma'|^2 = (n-1)(\varphi')^2+ \gamma^2\varphi^2 u^{-2}|\nabla^\perp u|^2.
\end{aligned}
\end{equation}
Let us define $u':=(u\circ\sigma)'$ and $u''=(u\circ\sigma)''$. Notice that 
\begin{equation}\label{eqn:Trace2}
u''=\nabla^2 u(\sigma',\sigma')+\langle \nabla u,\nabla_{\sigma'}\sigma'\rangle = \nabla^2 u(\sigma',\sigma')+\gamma u^{-1}|\nabla^\perp u|^2.
\end{equation}
Let $\varphi\in C^\infty_c((0,\ell))$ and test \eqref{eqn:Test} with the normal field $\varphi e_i\in C_c^\infty((0,\ell);\sigma^*TM)$ for each $i$. Summing over $i$, we get, using \eqref{eqn:Trace1} and \eqref{eqn:Trace2}
\begin{equation}\label{eqn:second varitationsummedNoBdry}
    \begin{aligned}
    0 &\leq \int_0^\ell u^\gamma\Big[(n-1)(\varphi')^2+{\gamma^2u^{-2}|\D^\perp u|^2\varphi^2}+(-2\gamma^2-\gamma)u^{-2}|\D^\perp u|^2\varphi^2 \\
    &\qquad +\gamma u^{-1}\varphi^2\Delta u-\gamma u^{-1}\D^2 u(\sigma',\sigma')\varphi^2-\Ric(\sigma',\sigma')\varphi^2\Big]\,\mathrm{d}t \\
    &= \int_0^\ell u^\gamma\Big[(n-1)(\varphi')^2-\gamma u^{-2}|\D^\perp u|^2\varphi^2 \\
    &\qquad -\gamma u^{-1}u''\varphi^2+\varphi^2(\gamma u^{-1}\Delta u - \mathrm{Ric}(\sigma',\sigma'))\Big]\,\mathrm{d}t.
\end{aligned}
\end{equation}
Notice that the latter \eqref{eqn:second varitationsummedNoBdry} holds for every $\varphi\in C^\infty_c((0,\ell))$.

{Recall that $\sigma$ is a free boundary minimizer of $E$ connecting $C_1$ to $C_2$. Hence, for an arbitrary $\varphi\in C^\infty([0,\ell])$,  \eqref{eqn:second varitationsummedNoBdry} contains a boundary term involving the mean curvature of the boundary. In order to handle this general case, one needs to use the  second variation formula for arbitrary (not necessarily in $C_c^\infty((0,\ell);\sigma^*TM)$) variational fields $e\perp\sigma'$. Using arbitrary variations, the term $\nabla_e e$ might not vanish, and, integrating by parts, one has that
\begin{equation}\label{eqn:IBP}
\begin{aligned}
\int_0^\ell &\left(\gamma u^{\gamma-1}\langle\nabla u,\nabla_e e\rangle + u^\gamma\langle \nabla_{\sigma'}\nabla_e e,\sigma'\rangle\right)\mathrm{d}t = \\ &= -u^\gamma(\sigma(0))\mathrm{II}_{\partial M}(e(0),e(0))
-u^\gamma(\sigma(\ell))\mathrm{II}_{\partial M}(e(\ell),e(\ell)),
\end{aligned}
\end{equation}
where $\mathrm{II}_{\partial M}$ is computed with respect to the outward unit normal $\nu$: see, e.g., the last chain of equalities in the proof of \cite[Lemma 2.5]{HongWang}.

All in all, using \eqref{eqn:IBP} in the first inequality of \eqref{eqn:Test}, and arguing as in \eqref{eqn:second varitationsummedNoBdry} we have that for all $\varphi\in C^{\infty}([0,\ell])$ 
\begin{equation}\label{eqn:second varitationsummed}
    \begin{aligned}
    0 &\leq \int_0^\ell u^\gamma\Big[(n-1)(\varphi')^2-\gamma u^{-2}|\D^\perp u|^2\varphi^2-\gamma u^{-1}u''\varphi^2 \\
    & +\varphi^2(\gamma u^{-1}\Delta u - \mathrm{Ric}(\sigma',\sigma'))\Big]\,\mathrm{d}t - u^\gamma(\sigma(\ell)) \varphi^2(\ell)H_{\partial M}(\sigma(\ell))-u^\gamma(\sigma(0)) \varphi^2(0)H_{\partial M}(\sigma(0)),
\end{aligned}
\end{equation}
where $H_{\partial M}$ denotes the mean curvature computed with respect to the outward unit normal $\nu$.
Plugging $\varphi=u^{-\frac{\gamma}{n-1}}\psi$ we get the following. 
\begin{lemma}\label{lem:SecondVariationGeneral}
    Let $n\geq 2$. Let $M^n$ be a possibly noncompact manifold with disconnected boundary. Let us write $\partial M=C_1\sqcup C_2$, where $C_1,C_2$ are nonempty unions of connected components of $\partial M$.
    
    Let $0<u\in C^{2,\alpha}(M)$, and $\gamma\in\mathbb R$. Let $\sigma:[0,\ell]\to M$ be parameterized by $g$-arclength.  We assume that $\sigma(0)\in C_1$, $\sigma(\ell)\in C_2$, and $\sigma((0,\ell))\subset M\setminus\partial M$. Moreover, assume that $\sigma$ is a minimizer of the functional $E(\beta):=\int_\beta u^\gamma \mathrm{d}\ell_g$, where $\mathrm{d}\ell_g$ is the $g$-length element, among compact curves $\beta$ connecting $C_1$ to $C_2$.  
    
    Let $\nu$ be the outward unit normal to $\partial M$, $H_{\partial M}$ the mean curvature computed with respect to $\nu$, and $u_{\nu}:=\langle \nabla u,\nu\rangle$. For every $\psi\in C^\infty([0,\ell])$  we have 
    \begin{align*}
    0\leq &\int_0^\ell u^{\frac{n-3}{n-1}\gamma}\Big[
(n-1)(\psi')^2 
+\psi^2\Big[\Big(\frac{n-2}{n-1}\gamma^2-\gamma\Big)
u^{-2}(u')^2 \\
&\quad 
-\gamma u^{-2}|\nabla^\perp u|^2
+\Big(\gamma u^{-1}\Delta u
-\Ric(\sigma',\sigma')\Big)
\Big]\Big]\,\mathrm{d}t \\
&\quad
-u(\sigma(\ell))^{\frac{n-3}{n-1}\gamma}\psi(\ell)^2
\bigl(\gamma u^{-1}u_\nu+H_{\partial M}\bigr)(\sigma(\ell))
-
u(\sigma(0))^{\frac{n-3}{n-1}\gamma}\psi(0)^2
\bigl(\gamma u^{-1}u_\nu+H_{\partial M}\bigr)(\sigma(0)).
    \end{align*}
\end{lemma}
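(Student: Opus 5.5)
The plan is to start from the already-established inequality \eqref{eqn:second varitationsummed}, which holds for every $\varphi\in C^\infty([0,\ell])$, and substitute $\varphi=u^{-\frac{\gamma}{n-1}}\psi$. Here $u$ is read along $\sigma$. Since $u>0$ is $C^{2,\alpha}$ and $\sigma$ is $C^{3,\alpha}$, this $\varphi$ is an admissible $C^2$ test function on $[0,\ell]$; by density, \eqref{eqn:second varitationsummed} extends to such $\varphi$. The boundary terms come out immediately: $u^\gamma\varphi^2=u^{\frac{n-3}{n-1}\gamma}\psi^2$ at $t=0,\ell$.

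The first real step is to remove the term $-\gamma u^{\gamma-1}u''\varphi^2$ by integrating by parts. I would write
\[
-\int_0^\ell\gamma u^{\gamma-1}u''\varphi^2 = \int_0^\ell \gamma\bigl(u^{\gamma-1}\varphi^2\bigr)'u' \;-\;\Bigl[\gamma u^{\gamma-1}u'\varphi^2\Bigr]_0^\ell .
\]
Because $\sigma$ meets $\partial M$ orthogonally (first variation, \cref{sec:FirstVariation}), we have $\sigma'(\ell)=\nu$ and $\sigma'(0)=-\nu$. Hence $u'(\ell)=u_\nu(\sigma(\ell))$ and $u'(0)=-u_\nu(\sigma(0))$. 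The boundary bracket therefore contributes exactly $-\gamma u^{\gamma-1}u_\nu\varphi^2$ at both endpoints. This combines with the $H_{\partial M}$ terms to give $-u^{\gamma}\varphi^2(\gamma u^{-1}u_\nu+H_{\partial M})$ at each end, as in the statement.

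Next I would expand the interior. With $\varphi=u^{-a}\psi$ and $a=\frac{\gamma}{n-1}$, we have $\varphi'=u^{-a}(\psi'-a u^{-1}u'\psi)$. Then
\[
(n-1)u^\gamma(\varphi')^2=(n-1)u^{\gamma-2a}\bigl[(\psi')^2-2a u^{-1}u'\psi\psi'+a^2u^{-2}(u')^2\psi^2\bigr].
\]
Also $\gamma(u^{\gamma-1}\varphi^2)'u'=\gamma u^{\gamma-2a}\bigl[(\gamma-1-2a)u^{-2}(u')^2\psi^2+2u^{-1}u'\psi\psi'\bigr]$. The choice of $a$ is exactly what makes the cross terms $\psi\psi'u'$ cancel, since $-2(n-1)a+2\gamma=0$.

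The remaining $(u')^2$ coefficient is $(n-1)a^2+\gamma(\gamma-1-2a)=\frac{\gamma^2}{n-1}+\gamma^2-\gamma-\frac{2\gamma^2}{n-1}=\frac{n-2}{n-1}\gamma^2-\gamma$, matching the statement. The other terms, $-\gamma u^{-2}|\nabla^\perp u|^2$ and $\gamma u^{-1}\Delta u-\Ric(\sigma',\sigma')$, are simply multiplied by the weight $u^{\gamma-2a}=u^{\frac{n-3}{n-1}\gamma}$.

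The main point needing care is the sign bookkeeping of the boundary term from the integration by parts, together with the orientation of $\sigma'$ relative to the outward normal at each endpoint. Everything else is routine algebra.
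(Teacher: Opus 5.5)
Your proposal is correct and follows the paper's proof. Both substitute $\varphi=u^{-\gamma/(n-1)}\psi$ into \eqref{eqn:second varitationsummed}, integrate the $u''$ term by parts so that the $\psi\psi'u'$ cross terms cancel, and convert the boundary bracket using $u'(0)=-u_\nu$ and $u'(\ell)=u_\nu$. The differences are cosmetic: you integrate by parts in terms of $\varphi$ before expanding in $\psi$, and you note explicitly that $\varphi$ is only $C^{2,\alpha}$ (so a density step is needed), a point the paper passes over silently.
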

\begin{proof}
    Set
\[
a:=\frac{\gamma}{n-1},
\qquad
\varphi=u^{-a}\psi.
\]
Then
\[
\varphi'=(u^{-a}\psi)'=u^{-a}\psi'-a\,u^{-a-1}u'\psi,
\]
so
\[
(\varphi')^2
=
u^{-2a}(\psi')^2-2a\,u^{-2a-1}u'\psi\psi'
+a^2u^{-2a-2}(u')^2\psi^2.
\]
Also,
\[
u^\gamma\varphi^2=u^{\gamma-2a}\psi^2
=
u^{\frac{n-3}{n-1}\gamma}\psi^2.
\]
Hence
\[
u^\gamma(\varphi')^2
=
u^{\frac{n-3}{n-1}\gamma}(\psi')^2
-2a\,u^{\frac{n-3}{n-1}\gamma-1}u'\psi\psi'
+a^2u^{\frac{n-3}{n-1}\gamma-2}(u')^2\psi^2,
\]
and therefore
\[
(n-1)u^\gamma(\varphi')^2
=
(n-1)u^{\frac{n-3}{n-1}\gamma}(\psi')^2
-2\gamma\,u^{\frac{n-3}{n-1}\gamma-1}u'\psi\psi'
+\frac{\gamma^2}{n-1}u^{\frac{n-3}{n-1}\gamma-2}(u')^2\psi^2.
\]

Plugging this into the inequality \eqref{eqn:second varitationsummed} gives
\begin{align*}
0\leq \int_0^\ell &\Big[
(n-1)u^{\frac{n-3}{n-1}\gamma}(\psi')^2
-2\gamma\,u^{\frac{n-3}{n-1}\gamma-1}u'\psi\psi'
+\frac{\gamma^2}{n-1}u^{\frac{n-3}{n-1}\gamma-2}(u')^2\psi^2 \\
&\qquad
-\gamma u^{\frac{n-3}{n-1}\gamma-2}|\nabla^\perp u|^2\psi^2
-\gamma u^{\frac{n-3}{n-1}\gamma-1}u''\psi^2 \\
&\qquad
+\psi^2\Big(\gamma u^{\frac{n-3}{n-1}\gamma-1}\Delta u
-u^{\frac{n-3}{n-1}\gamma}\Ric(\sigma',\sigma')\Big)
\Big]\,\mathrm{d}t \\
&\quad
-u(\sigma(\ell))^{\frac{n-3}{n-1}\gamma}\psi(\ell)^2H_{\partial M}(\sigma(\ell))
-u(\sigma(0))^{\frac{n-3}{n-1}\gamma}\psi(0)^2H_{\partial M}(\sigma(0)).
\end{align*}

Now we integrate by parts the term
\[
-\gamma\int_0^\ell u^{\frac{n-3}{n-1}\gamma-1}u''\psi^2\,\mathrm{d}t.
\]
Set
\[
\alpha:=\frac{n-3}{n-1}\gamma-1.
\]
Then
\[
-\gamma\int_0^\ell u^\alpha u''\psi^2\,\mathrm{d}t
=
-\gamma\Big[u^\alpha u'\psi^2\Big]_0^\ell
+\gamma\int_0^\ell (u^\alpha\psi^2)'u'\,\mathrm{d}t.
\]
Since
\[
(u^\alpha\psi^2)'=\alpha u^{\alpha-1}u'\psi^2+2u^\alpha\psi\psi',
\]
we get
\[
-\gamma\int_0^\ell u^\alpha u''\psi^2\,\mathrm{d}t
=
-\gamma\Big[u^\alpha u'\psi^2\Big]_0^\ell
+\gamma\alpha\int_0^\ell u^{\alpha-1}(u')^2\psi^2\,\mathrm{d}t
+2\gamma\int_0^\ell u^\alpha u'\psi\psi'\,\mathrm{d}t.
\]
Substituting this back, the mixed term cancels, and one obtains
\begin{align*}
0\leq \int_0^\ell &\Big[
(n-1)u^{\frac{n-3}{n-1}\gamma}(\psi')^2 \\
&\qquad
+\Big(\frac{\gamma^2}{n-1}
+\gamma\Big(\frac{n-3}{n-1}\gamma-1\Big)\Big)
u^{\frac{n-3}{n-1}\gamma-2}(u')^2\psi^2 \\
&\qquad
-\gamma u^{\frac{n-3}{n-1}\gamma-2}|\nabla^\perp u|^2\psi^2
+\psi^2\Big(\gamma u^{\frac{n-3}{n-1}\gamma-1}\Delta u
-u^{\frac{n-3}{n-1}\gamma}\Ric(\sigma',\sigma')\Big)
\Big]\,\mathrm{d}t \\
&\quad
-\gamma\Big[u^{\frac{n-3}{n-1}\gamma-1}u'\psi^2\Big]_0^\ell \\
&\quad
-u(\sigma(\ell))^{\frac{n-3}{n-1}\gamma}\psi(\ell)^2H_{\partial M}(\sigma(\ell))
-u(\sigma(0))^{\frac{n-3}{n-1}\gamma}\psi(0)^2H_{\partial M}(\sigma(0)).
\end{align*}
Since
\[
\frac{\gamma^2}{n-1}
+\gamma\Big(\frac{n-3}{n-1}\gamma-1\Big)
=
\frac{n-2}{n-1}\gamma^2-\gamma,
\]
this may also be written as
\begin{align*}
0\leq \int_0^\ell &\Big[
(n-1)u^{\frac{n-3}{n-1}\gamma}(\psi')^2 \\
&\qquad
+\Big(\frac{n-2}{n-1}\gamma^2-\gamma\Big)
u^{\frac{n-3}{n-1}\gamma-2}(u')^2\psi^2 \\
&\qquad
-\gamma u^{\frac{n-3}{n-1}\gamma-2}|\nabla^\perp u|^2\psi^2
+\psi^2\Big(\gamma u^{\frac{n-3}{n-1}\gamma-1}\Delta u
-u^{\frac{n-3}{n-1}\gamma}\Ric(\sigma',\sigma')\Big)
\Big]\,\mathrm{d}t \\
&\quad
-\gamma\Big[u^{\frac{n-3}{n-1}\gamma-1}u'\psi^2\Big]_0^\ell \\
&\quad
-u(\sigma(\ell))^{\frac{n-3}{n-1}\gamma}\psi(\ell)^2H_{\partial M}(\sigma(\ell))
-u(\sigma(0))^{\frac{n-3}{n-1}\gamma}\psi(0)^2H_{\partial M}(\sigma(0)).
\end{align*}
Notice that, since $\sigma$ meets $\partial M$ orthogonally, and $\nu$ is the outward pointing normal, $u'(\sigma(0))=-u_\nu(\sigma(0))$, and $u'(\sigma(\ell))=+u_\nu(\sigma(\ell))$. Regrouping the boundary terms, we thus get the sought inequality in the statement.
\end{proof}
We have the following two direct consequences of \cref{lem:SecondVariationGeneral}.
\begin{lemma}\label{lemgradienvanishes}
Let \(n\geq 2\). Assume \(0<\gamma\) if \(n=2\), and
\(0<\gamma<\frac{n-1}{n-2}\) if \(n\geq 3\). Let $(M^n,g)$ be a possibly noncompact manifold with disconnected boundary, and $0<u\in C^{2,\alpha}(M)$. Let us write $\partial M=C_1\sqcup C_2$, where $C_1$ and $C_2$ are nonempty unions of connected components of $\partial M$. Let $\nu$ be the outward unit normal to the boundary.

Let $\sigma:[0,\ell]\to M$ be a minimizer of $E:\beta\mapsto \int_\beta u^\gamma\mathrm{d}\ell_g$ among compact curves $\beta$ connecting $C_1$ to $C_2$, and assume $\sigma((0,\ell))\subset M\setminus \partial M$.
Let us assume
        \begin{equation}\label{eqn:ToCheckOnlyOnSpt2}
        -\gamma\Delta u+\mathrm{Ric}\cdot u \geq 0, \quad \text{on $\sigma((0,\ell))$}, \qquad H+\gamma u^{-1}\langle \nabla u,\nu\rangle\geq 0, \quad \text{at $\sigma(0),\sigma(\ell)$}.
        \end{equation}
 Then $\nabla u = 0$ and $\gamma  \Delta u- \mathrm{Ric}\cdot u=0$ along $\sigma$; and $H+\gamma u^{-1}\langle\nabla u,\nu\rangle=0$ at $\sigma(0),\sigma(\ell)$.
\end{lemma}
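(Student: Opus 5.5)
We plug the constant test function $\psi\equiv 1$ into \cref{lem:SecondVariationGeneral} and check that every term on the right-hand side is nonpositive. Then each term must vanish, and the conclusion follows.

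With $\psi\equiv1$ the term $(n-1)(\psi')^2$ drops out, and the inequality of \cref{lem:SecondVariationGeneral} becomes
\begin{align*}
0\leq &\int_0^\ell u^{\frac{n-3}{n-1}\gamma}\Big[\Big(\frac{n-2}{n-1}\gamma^2-\gamma\Big)u^{-2}(u')^2-\gamma u^{-2}|\nabla^\perp u|^2+u^{-1}\big(\gamma\Delta u-u\Ric(\sigma',\sigma')\big)\Big]\mathrm{d}t\\
&-\sum_{t\in\{0,\ell\}}u(\sigma(t))^{\frac{n-3}{n-1}\gamma}\bigl(\gamma u^{-1}u_\nu+H_{\partial M}\bigr)(\sigma(t)).
\end{align*}
We examine the terms one at a time.

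The first coefficient is $\gamma\big(\frac{n-2}{n-1}\gamma-1\big)$. It is strictly negative exactly when $0<\gamma<\frac{n-1}{n-2}$ for $n\geq3$, and for every $\gamma>0$ when $n=2$, since there it equals $-\gamma$. This is precisely the range in the hypothesis. The second coefficient, $-\gamma$, is strictly negative because $\gamma>0$.

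For the third term we use that $\Ric(\sigma',\sigma')\geq \Ric(\sigma(t))$, where the right side is the minimal Ricci function, because $\sigma'$ is a unit vector. Hence
\[
\gamma\Delta u-u\Ric(\sigma',\sigma')\leq \gamma\Delta u-u\Ric\leq 0
\]
on $\sigma((0,\ell))$, by \eqref{eqn:ToCheckOnlyOnSpt2}. The boundary terms are $\leq0$ by the second inequality in \eqref{eqn:ToCheckOnlyOnSpt2}.

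The integrand is therefore a sum of nonpositive continuous functions, the weight $u^{\frac{n-3}{n-1}\gamma}$ is positive, and the total is $\geq0$. So each piece vanishes identically on $(0,\ell)$, and by continuity on $[0,\ell]$. This gives $u'=0$ and $\nabla^\perp u=0$, hence $\nabla u=0$ along $\sigma$, since $u'=\langle\nabla u,\sigma'\rangle$. It also gives $\gamma\Delta u=u\Ric(\sigma',\sigma')$. Combined with $\gamma\Delta u-u\Ric(\sigma',\sigma')\leq\gamma\Delta u-u\Ric\leq0$, this squeeze yields $\gamma\Delta u-\Ric\cdot u=0$ along $\sigma$. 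Finally, both boundary terms vanish, so $H+\gamma u^{-1}u_\nu=0$ at $\sigma(0)$ and $\sigma(\ell)$.

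The only delicate point is confirming that the sign of the $(u')^2$ coefficient is strictly negative in exactly the stated range of $\gamma$, which is where the threshold $\frac{n-1}{n-2}$ enters. The rest is bookkeeping; in particular, passing from the directional Ricci term to the function $\Ric$ relies on $\Ric$ being the infimum over unit vectors.
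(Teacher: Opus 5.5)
Your proposal is correct and is exactly the paper's argument: test \cref{lem:SecondVariationGeneral} with $\psi\equiv 1$, observe that in the stated range of $\gamma$ every integrand term and both boundary terms are nonpositive (using $\Ric(\sigma',\sigma')\geq \Ric$ for the curvature term), and conclude that each vanishes. The only detail you leave implicit, which the paper mentions, is first reparameterizing $\sigma$ by $g$-arclength so that \cref{lem:SecondVariationGeneral} applies.
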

\begin{proof}
    Up to reparameterizing $\sigma$ by $g$-arclength, it is enough to plug $\psi=1$ in \cref{lem:SecondVariationGeneral} and use the assumptions to get
    \[
    u'=0, \quad \nabla^\perp u=0, \quad \gamma u^{-1}\Delta u - \mathrm{Ric}(\sigma',\sigma')=\gamma u^{-1}\Delta u - \mathrm{Ric}=0,
    \]
    along $\sigma$; and $H+\gamma u^{-1}\langle\nabla u,\nu\rangle=0$ at $\sigma(0),\sigma(\ell)$, as desired.
\end{proof}
\begin{lemma}\label{lemfiniteenergyfintelength}
    Let $n\geq 2$, and let 
    \begin{equation}\label{eqn:Assumptiongammainfiniteenergy}
    \text{$0<\gamma\leq \frac{n-1}{n-2}$ if $n\geq 3$, or $0<\gamma<4$ if $n=2$}.
    \end{equation}
    Let $(M^n,g)$ be a possibly noncompact manifold with boundary and $0<u\in C^{2,\alpha}(M)$. Let $C\subset\partial M$ be a connected component of $\partial M$.  Let $\nu$ be the outward unit normal to the boundary. Let $\sigma: [0, \infty) \to M$ be a curve with infinite $g$-length, and assume that $\sigma(0)\in C$, and $\sigma((0, \infty)) \subset M \setminus \partial M$.
    Let $E:\beta\mapsto \int_\beta u^\gamma\mathrm{d}\ell_g$,  
    and suppose $E(\sigma) < +\infty$.
    
    Let us assume
        \begin{equation}\label{eqn:ToCheckOnlyOnSpt}
        -\gamma\Delta u+\mathrm{Ric}\cdot u \geq 0, \quad \text{on $\sigma((0,\infty))$}, \qquad H+\gamma u^{-1}\langle \nabla u,\nu\rangle\geq 0, \quad \text{at $\sigma(0)$}.
        \end{equation}
     When $n\geq3$, assume in addition that either
$\gamma<\frac{n-1}{n-2}$, or at least one of the two inequalities in
\eqref{eqn:ToCheckOnlyOnSpt} is strict.
     
     Then there exists some $R>0$ such that $\sigma|_{[0, R]}$ is not $E$-minimizing among compact curves connecting $C$ to $\sigma(R)$.
\end{lemma}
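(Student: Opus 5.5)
We argue by contradiction. Suppose that for every $R>0$ the segment $\sigma|_{[0,R]}$ is $E$-minimizing among compact curves joining $C$ to $\sigma(R)$; we may assume $\sigma$ is parameterized by $g$-arclength. Each segment $\sigma|_{[0,R]}$ is then a free-boundary minimizer at $t=0$ with a fixed endpoint at $t=R$. Repeating the computation leading to \eqref{eqn:second varitationsummed} and \cref{lem:SecondVariationGeneral} with test functions vanishing at $t=R$, there is no boundary term at $R$, and only the term at $\sigma(0)$ survives. Since $\mathrm{Ric}(\sigma',\sigma')\geq\mathrm{Ric}$, \eqref{eqn:ToCheckOnlyOnSpt} gives $\gamma u^{-1}\Delta u-\mathrm{Ric}(\sigma',\sigma')\leq 0$ along $\sigma$ and a nonpositive boundary contribution at $\sigma(0)$. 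Hence for every $R$ and every $\psi\in C^\infty([0,R])$ with $\psi(R)=0$,
\[
0\leq \int_0^R u^{\frac{n-3}{n-1}\gamma}\Big[(n-1)(\psi')^2+\Big(\tfrac{n-2}{n-1}\gamma^2-\gamma\Big)u^{-2}(u')^2\psi^2-\gamma u^{-2}|\nabla^\perp u|^2\psi^2\Big]\mathrm{d}t-\mathcal{N}(\psi),
\]
where $\mathcal{N}(\psi)\geq 0$ collects the (nonpositive) curvature term and the boundary term at $\sigma(0)$.

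The plan is to test this with $\psi=u^{b}\eta$, where $\eta$ is a cutoff equal to $1$ near $t=0$ and decaying to $0$ at $t=R$, and $b$ is a free exponent. Expanding $(\psi')^2$ gives a term in $(\eta')^2$, a cross term $2b\,u^{-1}u'\eta\eta'$, and a modified coefficient of $u^{-2}(u')^2\eta^2$. I would handle the cross term by Young's inequality with a parameter $\delta$, or by writing it as a multiple of $(u^{2b})'(\eta^2)'$ and integrating by parts, and then choose $b$ and $\delta$ so that:
\begin{enumerate}
\item[(a)] the total coefficient of $u^{-2}(u')^2\eta^2$ is $\leq 0$, and strictly negative when $\gamma<\frac{n-1}{n-2}$ ($n\geq3$) or $\gamma<4$ ($n=2$);
\item[(b)] the total weight in front of $(\eta')^2$ is $u^\gamma$, so that the cutoff error is controlled by the energy.
\end{enumerate}
The point of (b) is the following. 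Take $\eta=1$ on $[0,T]$, $\eta$ linear from $1$ to $0$ on $[T,2T]$, and $R=2T$. Then $\int u^\gamma(\eta')^2\,\mathrm{d}t\leq T^{-2}E(\sigma)\to0$ as $T\to\infty$, since $E(\sigma)<\infty$. The infinite length of $\sigma$ is exactly what allows $T\to\infty$ at all. Checking that (a) and (b) can hold simultaneously is where the quadratic-discriminant condition on $\gamma$ should reproduce the thresholds $\frac{n-1}{n-2}$ and $4$: for $n=2$ the slack must come from the factor $(n-1)$ in front of $(\psi')^2$ absorbing the cross term.

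Letting $T\to\infty$, we obtain
\[
0\leq -c\int_0^\infty u^{\gamma-2}(u')^2\,\mathrm{d}t-\gamma\int_0^\infty u^{\gamma-2}|\nabla^\perp u|^2\,\mathrm{d}t-\liminf_{T\to\infty}\mathcal{N}(u^b\eta_T),
\]
with $c\geq0$. Each term on the right is nonpositive, so each vanishes.
\begin{enumerate}
\item[(i)] If $\gamma<\frac{n-1}{n-2}$ (or $n=2$), then $c>0$ forces $u'\equiv0$, so $u$ is constant along $\sigma$. Then $E(\sigma)=u(\sigma(0))^\gamma\cdot\mathrm{length}(\sigma)=\infty$, contradicting finite energy.
\item[(ii)] If $\gamma=\frac{n-1}{n-2}$ and one of the inequalities in \eqref{eqn:ToCheckOnlyOnSpt} is strict, then $\mathcal{N}(u^b\eta_T)$ is bounded below by a fixed positive quantity. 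The boundary term at $\sigma(0)$ does not depend on $T$, and so neither does a strict interior defect on $[0,1]$, where $\eta_T\equiv1$. This is again a contradiction.
\end{enumerate}

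The main obstacle is the exponent and weight bookkeeping in (a)--(b). Requiring the cutoff weight to be exactly $u^\gamma$ forces $b=\frac{\gamma}{n-1}$, which is the unsubstituted $\varphi$-form, whose $u'^2$ coefficient $\gamma(\gamma-1)$ is bad for $\gamma>1$. So some slack is needed. I would first try a general $b$ together with reparameterizing the cutoff by an adapted arclength $\mathrm{d}\tau=u^{p}\,\mathrm{d}t$, which converts the $(\eta')^2$-integral into one controlled by $E(\sigma)$. If the weights still do not match, the fallback is to integrate the cross term by parts, producing terms like $\int u^\gamma(\eta^2)''$ that are also $O(E/T^2)$ for piecewise-linear-smoothed cutoffs.
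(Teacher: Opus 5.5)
Your skeleton matches the paper's: argue by contradiction, use the stability inequality with only the free-boundary term at $\sigma(0)$, cut off, let the cutoff run to infinity, then get $u'\equiv 0$ and contradict $E(\sigma)<\infty$, or use strictness when $\gamma=\frac{n-1}{n-2}$. But the step that carries the lemma is left unresolved, and the plan you commit to for it cannot work.

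Requirement (b) forces $b=\frac{\gamma}{n-1}$, i.e.\ a pure cutoff in the unsubstituted $\varphi$-form. There the $u^{-2}(u')^2$ coefficient is $\gamma(\gamma-1)$, so this route only closes for $\gamma<1$. That holds for $n\geq 3$ and also for $n=2$, so it does not reach $\gamma<4$ when $n=2$. Neither fallback repairs this.
\begin{itemize}
\item Writing the cross term as $(u^\gamma)'(\eta^2)'$ and integrating by parts removes it at a cost $O\bigl(\int u^\gamma|(\eta^2)''|\bigr)$. However, it leaves the coefficient $\gamma(\gamma-1)$ unchanged.
\item Matching the weight to $u^\gamma$ via $\mathrm{d}\tau=u^{p}\,\mathrm{d}t$ requires $p>0$ for the good exponents (e.g.\ $p=\frac{\gamma}{n-1}$ when $b=0$). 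It also requires $\int_0^\infty u^{p}\,\mathrm{d}t=\infty$, which does not follow from $\int_0^\infty u^\gamma\,\mathrm{d}t<\infty$ and infinite $g$-length.
\end{itemize}

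The missing observation is that the weight in front of the cutoff derivative need not be $u^\gamma$. Any weight $u^\beta$ with $0\leq\beta\leq\gamma$ is harmless, since $u^\beta\leq 1+u^\gamma$. The first piece is handled by the infinite length (long cutoffs), the second by finite energy.

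For $n\geq 3$, simply take $b=0$ in \cref{lem:SecondVariationGeneral}. The cross term is then absent, and the coefficient $\frac{n-2}{n-1}\gamma^2-\gamma$ is the optimal one: it is $\leq 0$, and $<0$ if $\gamma<\frac{n-1}{n-2}$. The weight exponent $\frac{n-3}{n-1}\gamma$ lies in $[0,\gamma]$. With your linear cutoff on $[T,2T]$, the error is at most $(n-1)\bigl(T^{-1}+T^{-2}E(\sigma)\bigr)$. The paper does the equivalent: it takes a cutoff on $[R_i,R_i+1]$ with $\int_{R_i}^{R_i+1}u^\gamma\to 0$ and applies H\"older with exponents $\frac{n-1}{n-3}$ and $\frac{n-1}{2}$. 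For $n=3$, where the weight is $1$, it uses a cutoff of slope $2/i$ on an interval of length $i$.

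For $n=2$ the exponent $\frac{n-3}{n-1}\gamma=-\gamma$ is negative, so that form is unusable. The paper instead substitutes $\varphi=u^{-\gamma/2}\psi$, which makes the weight $u^0$. This produces a cross term $\gamma u^{-1}u'\psi\psi'$ and the coefficient $\frac{\gamma^2}{4}-\gamma$. Young's inequality with a large parameter moves the cross term into a large multiple of $(\psi')^2$. That costs nothing, since $\int(\psi')^2\to 0$ for long linear cutoffs, and the $u^{-2}(u')^2$ coefficient stays negative exactly when $\gamma<4$. So the slack does not come from the factor $n-1=1$.

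In terms of your discriminant heuristic, set $a=\frac{\gamma}{n-1}-b$, so $\varphi=u^{-a}\eta$. The coefficient becomes $(n-1)a^2-2\gamma a+\gamma^2-\gamma$, with weight $u^{\gamma-2a}$. The sharp thresholds come from requiring this coefficient to be negative for some $a\in[0,\gamma/2]$ (or zero with vanishing cross term, which happens at $a=\frac{\gamma}{n-1}$). They do not come from $a=0$, which is what (b) imposes.
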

\begin{proof}
    Without loss of generality we can assume that $\sigma:[0,\infty)\to M$ is parameterized by $g$-arclength. 
    
    Suppose by contradiction that for any $R > 0$, $\sigma|_{[0, R]}$ is minimizing among compact curves connecting $C$ to $\sigma (R)$. Thus $\sigma$ is $C^{3,\alpha}$-regular, it meets $C$ orthogonally, and, for every $R>0$, it is stable for variations that fix $\sigma(R)$ and allow the starting point to vary in $C$.  Hence, by the computations of the second variation in \cref{sec:Secondvariation} and \cref{lem:SecondVariationGeneral}, in this case we have that for every $\psi\in C^\infty_c([0,\infty))$
    \begin{equation}\label{eqn:Secondvariationoneboundarycomponent}
    \begin{aligned}
    0\leq &\int_0^\infty u^{\frac{n-3}{n-1}\gamma}\Big[
(n-1)(\psi')^2 
+\psi^2\Big[\Big(\frac{n-2}{n-1}\gamma^2-\gamma\Big)
u^{-2}(u')^2 -\gamma u^{-2}|\nabla^\perp u|^2 \\
&\quad 
+\Big(\gamma u^{-1}\Delta u
-\Ric(\sigma',\sigma')\Big)
\Big]\Big]\,\mathrm{d}t - 
u(\sigma(0))^{\frac{n-3}{n-1}\gamma}\psi(0)^2
\bigl(\gamma u^{-1}u_\nu+H_{\partial M}\bigr)(\sigma(0)).
\end{aligned}
    \end{equation}

    Since $E(\sigma)$ is bounded, there exists a sequence $R_i \to \infty$ such that
    \[
        \varepsilon_i := \int^{R_i + 1}_{R_i} u^\gamma\,\mathrm{d}t \to 0\,.
    \]
    Let $\psi_i|_{[0,R_i]} \equiv 1$, $\psi_i|_{[R_i + 1, \infty)} \equiv 0$ be a cut-off function with $|\psi_i'| \leq 2$. Plugging $\psi_i$ in \eqref{eqn:Secondvariationoneboundarycomponent} we get
    \[
    \begin{aligned}
        0 \leq \int^\infty_0 u^{\frac{n -3}{n-1}\gamma}&[(n - 1)(\psi_i')^2 + \left(\frac{n - 2}{n - 1}\gamma^2 - \gamma\right) u^{-2}(u')^2 \psi_i^2\,  \\
        &+\psi_i^2(\gamma u^{-1}\Delta u - \mathrm{Ric})]\mathrm{d}t - u(\sigma(0))^{\frac{n-3}{n-1}\gamma}(\gamma u ^{-1}u_\nu + H_{\partial M})(\sigma(0))
    \end{aligned}
    \]
    which implies, when $n\geq 3$,
    \begin{align*}
        \int^\infty_0 &u^{\frac{n -3}{n-1}\gamma}\psi_i^2\left[\left(\gamma - \frac{n - 2}{n - 1}\gamma^2\right) u^{-2}(u')^2 + (-\gamma u^{-1}\Delta u + \mathrm{Ric})\right]\mathrm{d}t \\ 
        &+ u(\sigma(0))^{\frac{n-3}{n-1}\gamma}(\gamma u ^{-1}u_\nu + H_{\partial M})(\sigma(0))
        \leq \int^{R_i + 1}_{R_i} u^{\frac{n -3}{n-1}\gamma}(n - 1)(\psi_i')^2 \, \mathrm{d}t\\
        &\leq (n-1)\left(\int^{R_i + 1}_{R_i} u^{\gamma} \, \mathrm{d}t \right) ^{\frac{n-3}{n-1}}\left(\int^{R_i + 1}_{R_i}|\psi_i'|^{n-1} \, \mathrm{d}t \right)^{\frac{2}{n-1}}\\
        &\leq 4(n-1)\varepsilon_i^{(n-3)/(n-1)}\,.
    \end{align*}
    If $n>3$, as $i \to \infty$, the right-hand side converges to $0$, and thus, 
    \begin{equation}\label{eqn:FinalEst}
    \begin{aligned}
        \int^\infty_0 &u^{\frac{n -3}{n-1}\gamma}\left[\left(\gamma - \frac{n - 2}{n - 1}\gamma^2\right) u^{-2}(u')^2 + (-\gamma u^{-1}\Delta u + \mathrm{Ric})\right]\mathrm{d}t  \\
        &+ u(\sigma(0))^{\frac{n-3}{n-1}\gamma}(\gamma u ^{-1}u_\nu + H_{\partial M})(\sigma(0)) = 0
        \end{aligned}
    \end{equation}
    If $\gamma<\frac{n-1}{n-2}$, this implies that $u' = 0$ along $\sigma$, but $u(\sigma(0)) > 0$, contradicting $E(\sigma) < +\infty$. Thus we must have $\gamma=\frac{n-1}{n-2}$. Then, by assumption, at least one of the inequalities in \eqref{eqn:ToCheckOnlyOnSpt} is strict, which is again a contradiction with \eqref{eqn:FinalEst}. 
    
    If instead $n=3$, we choose a test function so that $\psi_i|_{[0,R_i]}\equiv 1$, $\psi_i|_{[R_i+i,\infty)}\equiv 0$ with $|\psi_i'|\leq 2i^{-1}$. The estimates above imply again \eqref{eqn:FinalEst}, from which the contradiction follows arguing verbatim as above. 

    Let us now treat the case $n=2$. Let $\varphi\in C^\infty_c([0,\infty))$. For \(n=2\), set
\[
\varphi=u^{-\gamma/2}\psi.
\]
Then
\[
\varphi'=u^{-\gamma/2}\psi'-\frac{\gamma}{2}u^{-\gamma/2-1}u'\psi,
\]
hence
\[
u^\gamma(\varphi')^2
=(\psi')^2-\gamma u^{-1}u'\psi\psi'
+\frac{\gamma^2}{4}u^{-2}(u')^2\psi^2,
\qquad
u^\gamma\varphi^2=\psi^2.
\]
Integrating by parts,
\[
-\gamma\int_0^\infty u^{-1}u''\psi^2\,\mathrm{d}t
=
-\gamma\Big[u^{-1}u'\psi^2\Big]_0^\infty
-\gamma\int_0^\infty u^{-2}(u')^2\psi^2\,\mathrm{d}t
+2\gamma\int_0^\infty u^{-1}u'\psi\psi'\,\mathrm{d}t.
\]
Substituting into the analogue of \eqref{eqn:second varitationsummed} for compactly supported variations in $[0,\infty)$, 
and using the inequalities in the assumption we obtain
\[
\begin{aligned}
0\leq \int_0^\infty \Big[
(\psi')^2+\gamma u^{-1}u'\psi\psi'
+\Big(\frac{\gamma^2}{4}-\gamma\Big)u^{-2}(u')^2\psi^2
\Big]\,\mathrm{d}t.
\end{aligned}
\]
Let $\eta>0$ to be chosen later. Now use Young's inequality
\[
ab\le \frac{\eta}{2}a^2+\frac{1}{2\eta}b^2,
\]
with
\[
a=\psi',
\qquad
b=\gamma u^{-1}u'\psi.
\]
Then
\[
\gamma u^{-1}u'\psi\psi'
\le \frac{\eta}{2}(\psi')^2+\frac{\gamma^2}{2\eta}u^{-2}(u')^2\psi^2.
\]
Hence
\begin{equation}\label{eqn:Tesn=2}
\int_0^\infty \Big(-\frac{\gamma^2}{4}+\gamma-\frac{\gamma^2}{2\eta}\Big)u^{-2}(u')^2\psi^2 \mathrm{d}t \leq \int_0^\infty
\Big(1+\frac{\eta}{2}\Big)(\psi')^2\mathrm{d}t.
\end{equation}
Since $\gamma<4$, we can choose $\eta>0$ large enough so that $-\frac{\gamma^2}{4}+\gamma-\frac{\gamma^2}{2\eta}>0$. As done before, we test \eqref{eqn:Tesn=2} with $\psi_i|_{[0,R_i]}\equiv 1$, $\psi_i|_{[R_i+i,\infty)}\equiv 0$ with $|\psi_i'|\leq 2i^{-1}$, where $R_i\to\infty$. As $i\to \infty$ we find $u'=0$ along $\sigma$, but $u(\sigma(0))>0$, contradicting $E(\sigma)<+\infty$.
\end{proof}

\section{Sharpness of the range}\label{sec:Sharp}
We present here the examples that show the sharpness of our results.
\begin{proposition}\label{mainthmsharp}
     Let $n\geq 3$. For every $\gamma>\frac{n-1}{n-2}$ there is a compact Riemannian manifold with boundary $(M^n,g)$ such that:
     \begin{enumerate}
         \item $\lambda_1(-\gamma\Delta+\mathrm{Ric})> 0$;
         \item $\mathrm{Ric}\not\geq 0$;
         \item $\mathrm{II}_{\partial M}> 0$;
         \item $\pi_1(M,\partial M)\neq 0$.
     \end{enumerate}
     In particular, by \cite[Proposition 2.8]{FraserLi}, $(M^n,g)$ does not admit a metric with $\mathrm{Ric}>0$ and $H_{\partial M}>0$.
    \end{proposition}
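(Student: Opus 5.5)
The plan is to build $(M^n,g)$ as an interior connected sum of two strictly convex spherical caps, using the connected-sum construction of \cite{AX25} for $\gamma>\frac{n-1}{n-2}$. Since $\partial M$ will then have two components, item (4) is automatic, and items (2) and (3) follow from the boundary staying round.

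\textbf{Step 1 (building blocks).} Let $B_1,B_2$ be two copies of a closed geodesic ball of radius $r<\pi/2$ in the round sphere $S^n$. Each $B_i$ has $\mathrm{Ric}=n-1>0$ and totally umbilic boundary with $\mathrm{II}_{\partial B_i}=\cot r>0$. Fix interior points $p_i\in B_i$ and a small $\delta>0$ with $B_{4\delta}(p_i)$ far from $\partial B_i$.

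\textbf{Step 2 (neck insertion).} Remove $B_\delta(p_i)$ from each $B_i$ and glue in the neck metric of \cite{AX25}: a rotationally symmetric warped product $\mathrm{d}s^2+f(s)^2g_{S^{n-1}}$ that interpolates between the two punctured caps. The key point is that the \cite{AX25} construction is local. Its output is a metric that agrees with the original one outside $B_{2\delta}(p_1)\cup B_{2\delta}(p_2)$, together with a positive function $u$ satisfying $-\gamma\Delta u+\mathrm{Ric}\cdot u>0$ on the neck region.

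I expect the main obstacle to be checking two things about this $u$. First, that $u$ can be taken constant (say $u\equiv1$) outside the modified region, or at least glued to a constant with strict inequality preserved. Second, that $u$ solves the inequality in the sense of condition (ii) of the introduction. If \cite{AX25} only provides $\lambda_1>0$ variationally, I would first try this route: take its first eigenfunction on the closed doubled-sphere model, then patch it to $1$ in the round region where $\mathrm{Ric}=n-1$. There the strict inequality leaves room to absorb the cutoff error, choosing the neck parameters small enough.

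\textbf{Step 3 (spectral condition with the boundary).} Set $M=(B_1\setminus B_\delta(p_1))\cup_{\text{neck}}(B_2\setminus B_\delta(p_2))$. Then $u>0$ on $M$ and $u\equiv1$ near $\partial M$, so $\langle\nabla u,\nu\rangle=0$ on $\partial M$. Also $-\gamma\Delta u+\mathrm{Ric}\cdot u>0$ everywhere on the compact $M$. Testing \eqref{eqn:SpectralRicciCondition} with $\varphi=u\eta$ and integrating by parts (the boundary term vanishes by the Neumann condition) gives the strict bound $\lambda_1(-\gamma\Delta+\mathrm{Ric})>0$, which is item (1).

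\textbf{Step 4 (items (2)--(4)).} Item (3) holds because a neighborhood of $\partial M$ is isometric to the original caps, so $\mathrm{II}_{\partial M}=\cot r>0$. For item (4), $\partial M$ is the disjoint union of two spheres, so it is disconnected and $\pi_1(M,\partial M)\neq0$.

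For item (2), suppose $\mathrm{Ric}\geq0$. Then \cref{thmSplitting} applies with $\gamma=0$ and $u\equiv1$ (this is Kasue's theorem): $\partial M$ has two compact mean-convex components, so $M$ would split as $[0,L]\times\Sigma$. But then $\partial M$ would be totally geodesic, contradicting $\mathrm{II}_{\partial M}>0$. Hence $\mathrm{Ric}\not\geq0$.

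The final sentence of the proposition follows from item (4) and \cite[Proposition 2.8]{FraserLi}: a metric with $\mathrm{Ric}>0$ and $H_{\partial M}>0$ would force $\pi_1(M,\partial M)=0$.
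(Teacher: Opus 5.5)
Your route differs from the paper's, and it works once Step 2 is done correctly.

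\textbf{Comparison with the paper.} The paper glues a strictly convex round cap to $\mathbb S^1\times\mathbb S^{n-1}$, carrying the warped metric of \cite[Remark 4]{AX24}. The result $(\mathbb S^1\times\mathbb S^{n-1})\#\mathbb D^n$ has connected, simply connected boundary. So (4) fails through non-surjectivity of $\iota_*$, and (2) is inherited from the negative Ricci direction of the $\mathbb S^1\times\mathbb S^{n-1}$ piece.

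You glue two caps and get $\mathbb S^{n-1}\times[0,1]$ with two boundary spheres. Then (4) is free and (2) follows from Kasue. Your derivation of (2) is the more economical one. In fact, in either construction (2) is forced by (3) and (4): $\mathrm{Ric}\geq 0$ together with $H_{\partial M}>0$ would give $\pi_1(M,\partial M)=0$ by \cite[Proposition 2.8]{FraserLi}.

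The paper's choice tests the surjectivity half of \cref{thm:Pi10}: beyond the threshold, $\iota_*$ can fail to be onto even with connected boundary. Your example only breaks boundary connectedness, but it needs no input from \cite{AX24}. Both examples suffice for the proposition and for the sharpness of \cref{thm:MainThmIntro}.

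\textbf{The weak point: Step 2.} The fallback you suggest is not a reliable fix. Patching a first eigenfunction $\phi$ of a closed model to a constant $c$ via a cutoff $\chi$ creates the error $\gamma\bigl(|\Delta\chi|\,|\phi-c|+2|\nabla\chi|\,|\nabla\phi|\bigr)$. Where $\chi$ is close to $1$, the only slack available to absorb it is $\lambda_1\phi$, not $(n-1)u$. Nothing controls $\lambda_1$ or the oscillation of $\phi$ across the cap, and shrinking the neck does not help. You also cannot dominate the error by adding a large constant, because that constant would have to be cut off again before the neck, where $\mathrm{Ric}<0$ somewhere (as your own item (2) shows).

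\textbf{How to fix it.} The point you are missing is that \cite[Proposition 1]{AX25} is local in the weight as well as in the metric.
\begin{itemize}
\item Its input is a positive $u$ on the disjoint union with a Green-type singularity at the gluing points: $-\gamma\Delta u+u\,\mathrm{Ric}=\eta u+\delta_{p_1}+\delta_{p_2}$.
\item Its output $\tilde u$ coincides with $u$ outside $B_\varepsilon(p_1)\cup B_\varepsilon(p_2)$ and satisfies $-\gamma\tilde\Delta\tilde u+\tilde u\,\widetilde{\mathrm{Ric}}\geq\frac{\eta}{2}\tilde u$.
\end{itemize}
On a round cap such a $u_i$ is immediate. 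For $0<\eta<n-1$, take the Neumann Green's function of $-\gamma\Delta+(n-1-\eta)$ with pole $p_i$. It is positive and satisfies $\langle\nabla u_i,\nu\rangle=0$ on $\partial B_i$; this is exactly what the paper does for its cap $M_2$. Then $\tilde u$ satisfies the Neumann condition on $\partial M$, so you do not need $u\equiv 1$ there, and your Step 3 goes through unchanged.
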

        \begin{proof}
        In this proof, we will take $M^n=(\mathbb S^1\times \mathbb S^{n-1})\# \mathbb D^n$, and we will construct a metric $g$ satisfying the requirements in the statement. Let $\gamma>\frac{n-1}{n-2}$. Take $M_1$ to be $\mathbb S^1\times \mathbb S^{n-1}$ endowed with a Riemannian metric satisfying $\lambda_1(-\gamma\Delta+\mathrm{Ric})>0$ as in \cite[Remark 4]{AX24}. This metric exists because $\gamma>\frac{n-1}{n-2}$, and can be taken so that the lowest eigenvalue of the Ricci tensor is $<0$, see \cite[Remark 4]{AX24}. Let $x\in M_1$. Thus, by \cite[Theorem 2.3]{CatinoMariMastroliaRoncoroni} there is $\eta>0$ such that there is $u_1>0$ with $-\gamma\Delta u_1 +u_1\mathrm{Ric}=\eta u_1 +\delta_x$ on $M_1$.
        
        Take $M_2$ to be a round spherical cap with convex boundary. We clearly have $\lambda_1(-\gamma\Delta+\mathrm{Ric})>0$. Let $y\in \mathrm{int}(M_2)$. Up to possibly lowering $\eta$, we have that there is $u_2>0$ such that $-\gamma\Delta u_2+u_2\mathrm{Ric}=\eta u_2+\delta_y$ on $M_2\setminus \partial M_2$, and $\langle \nabla u_2,\nu\rangle =0$ on $\partial M_2$, where $\nu$ is the outward unit normal on $\partial M_2$.
        
        Take the disjoint union $M=M_1\sqcup M_2$, and $u:=u_i$ on $M_i$. Thus $-\gamma\Delta u+u\mathrm{Ric}=\eta u + (\delta_x+\delta_y)$ on $M$. We can thus perturb the metric around $x,y$ in $M$, and the function $u$ locally around $x,y$ as in \cite{AX25}. Namely, by \cite[Proposition 1]{AX25}, there exists a Riemannian metric $\tilde g$ on $M_1\# M_2$ that coincides with the one on $M_1\sqcup M_2$ on $M\setminus(B_\varepsilon(x)\cup B_\varepsilon(y))$, with $\varepsilon\ll 1$; and a smooth function $\tilde u$ that is equal to $u$ on $M\setminus(B_\varepsilon(x)\cup B_\varepsilon(y))$ (and thus $\langle\nabla u,\nu\rangle=0$ on $\partial M$ when $\varepsilon\ll 1$) such that $-\gamma\widetilde\Delta \tilde u + \widetilde u\widetilde{\mathrm{Ric}}\geq \frac{\eta\widetilde u}{2}$. The latter conditions imply $\lambda_1(-\gamma\Delta+\mathrm{Ric})>0$ on $\tilde M$. 
        
        Moreover, from the construction, we have $\mathrm{II}_{\partial\tilde M}>0$, because the metric is not perturbed in a small neighborhood of the boundary if $\varepsilon\ll 1$, and $\pi_1(\partial\tilde M)\to \pi_1(\tilde M)$ is not surjective because $\tilde M$ is not simply connected, while $\partial\tilde M$ is simply connected. Thus also $\pi_1(\tilde M,\partial\tilde M)\neq 0$, as desired. Finally, since the metric is not changed on $M\setminus(B_\varepsilon(x)\cup B_\varepsilon(y))$, the construction can be made so that the lowest eigenvalue of $\mathrm{Ric}$ is $<0$, because this was the case for $M_1$.
    \end{proof}

\begin{proposition}\label{eqn:Counterexample2}
For every \(n\geq 3\), there exists a compact Riemannian manifold with boundary \((M^n,g)\) such that
\begin{equation}\label{eqn:Lambda1Counter}
\lambda_1\left(-\frac{n-1}{n-2}\Delta+\Ric\right) \geq 0,
\end{equation}
\(H_{\partial M}=0\), and neither \(\pi_1(M,\partial M)=0\) nor there exists a Riemannian cover $([0,L]\times\Sigma,\mathrm{d}t^2+g_\Sigma)\to (M,g)$, where $\mathrm{Ric}_{g_\Sigma}\geq 0$, and $0<L<\infty$.
\end{proposition}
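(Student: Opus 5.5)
We will realize the example as a warped product over an interval. Let $M=[0,\pi]\times\mathbb S^{n-1}$ with
\[
g=\mathrm{d}t^2+f(t)^2g_{\mathbb S^{n-1}},\qquad f(t)=1+\varepsilon\cos t,
\]
for a small $\varepsilon>0$ to be fixed. Since $f'(0)=f'(\pi)=0$, both boundary slices $\{0\}\times\mathbb S^{n-1}$ and $\{\pi\}\times\mathbb S^{n-1}$ are totally geodesic, so $H_{\partial M}=0$. The boundary has two components, hence $\pi_1(M,\partial M)\neq 0$.

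The key step is to find a positive Neumann function certifying \eqref{eqn:Lambda1Counter}. We take an ansatz $u=f^\beta$. With $\gamma=\frac{n-1}{n-2}$ we have
\[
\frac{u'}{u}=\beta\frac{f'}{f},\qquad \frac{u''}{u}=\beta\frac{f''}{f}+\beta(\beta-1)\frac{f'^2}{f^2},\qquad \Delta u=u''+(n-1)\frac{f'}{f}u'.
\]
The curvatures of the warped product are
\[
\mathrm{Ric}(\partial_t,\partial_t)=-(n-1)\frac{f''}{f},\qquad
\mathrm{Ric}(X,X)=-\frac{f''}{f}+(n-2)\frac{1-f'^2}{f^2}\quad\text{for unit } X \text{ tangent to the spheres}.
\]
We choose $\beta=-(n-2)$, so that $\gamma\beta=-(n-1)$ and $\beta+n-2=0$. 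Then in the radial direction
\[
-\gamma\frac{\Delta u}{u}+\mathrm{Ric}(\partial_t,\partial_t)=-\gamma\beta\frac{f''}{f}-\gamma\beta(\beta+n-2)\frac{f'^2}{f^2}-(n-1)\frac{f''}{f}=0,
\]
for every $f$. This is exactly where the threshold $\frac{n-1}{n-2}$ enters. In the tangential directions we get
\[
-\gamma\frac{\Delta u}{u}+\mathrm{Ric}(X,X)=(n-2)\Big(\frac{f''}{f}+\frac{1-f'^2}{f^2}\Big)=\frac{n-2}{f^2}\big(ff''+1-f'^2\big).
\]
This is positive for small $\varepsilon$, because $ff''+1-f'^2=1+O(\varepsilon)$. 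Hence $-\gamma\Delta u+u\,\mathrm{Ric}\geq 0$, with $\mathrm{Ric}$ the minimal eigenvalue function. Moreover $\langle\nabla u,\nu\rangle=\pm\beta f^{\beta-1}f'=0$ at $t=0,\pi$. By condition (ii) in the definition, \eqref{eqn:Lambda1Counter} holds.

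It remains to exclude a Riemannian cover $\pi:([0,L]\times\Sigma,\mathrm{d}t^2+g_\Sigma)\to(M,g)$. Such a cover is a local isometry, so every point of $M$ would admit a unit vector $v$ with $\mathrm{Ric}(v,v)=0$ which is parallel. Moreover $\mathrm{Ric}_g\geq 0$ would hold, since $\mathrm{Ric}_{g_\Sigma}\geq 0$. But near $t=\pi$ we have $\mathrm{Ric}(\partial_t,\partial_t)=(n-1)\varepsilon\cos t/f<0$, which contradicts $\mathrm{Ric}\geq0$.

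This last step is the one to state carefully, though it is elementary. The main computational point is the cancellation in the radial direction obtained by choosing $\beta=-(n-2)$. For completeness one can also observe that $\mathrm{Ric}\not\geq0$ rules out the splitting directly, since any product $\mathrm{d}t^2+g_\Sigma$ with $\mathrm{Ric}_{g_\Sigma}\geq0$ has nonnegative Ricci curvature, and this is preserved under local isometries.
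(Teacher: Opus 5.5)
Your proposal is correct and follows essentially the paper's construction. Both take a warped product $\mathbb S^{n-1}$ over an interval between two critical points of the warping function $f$, with $u=f^{2-n}$ and $\gamma=\frac{n-1}{n-2}$, which gives exact cancellation in the radial direction; $f'=0$ at the ends gives $H_{\partial M}=0$ and the Neumann condition, the two boundary components give $\pi_1(M,\partial M)\neq 0$, and $f''>0$ somewhere gives $\mathrm{Ric}\not\geq 0$, which rules out the cover. The only difference is cosmetic: you use the small-amplitude warping $1+\varepsilon\cos t$ and check the tangential inequality by hand, whereas the paper shrinks the sphere factor via $\varepsilon^2(2+\sin t)^2$ and cites \cite{AX24} for the computation.
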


\begin{proof}
The example we construct is topologically $\mathbb S^{n-1}\times [0,1]$. It is essentially the one constructed in \cite[Remark 4]{AX24}, see also \cite{BourCarron}. Let $\mathbb S^1\cong [0,2\pi]$ with the endpoints identified. Let \(f:[0,2\pi]\to (0,\infty)\) be a smooth positive nonconstant periodic function such that there are \(a<b\in [0,2\pi]\) with \(a\neq b\) satisfying
\[
f'(a)=f'(b)=0.
\] 
Assume moreover there exists $c\in (a,b)$ such that 
\begin{equation}\label{eqn:RicciViolated}
f''(c)>0.
\end{equation}
Choosing $f(t):=\sin(t)+2$, $a=\frac{\pi}{2}$, $b=\frac{3\pi}{2}$, and $c=\frac{5\pi}{4}$ is enough. Let \(N=\mathbb S^1\times \mathbb S^{n-1}\), and equip it with a warped product metric
\[
g=\mathrm{d}t^2+\varepsilon^2 f(t)^2g_{\mathbb S^{n-1}},
\]
where $\varepsilon>0$ is small enough so that $\mathrm{Ric}_g(\partial_t,\partial_t)=\mathrm{Ric}_g$. Let $
u(t):=f(t)^{2-n}$. 
The computations in \cite[Remark 4]{AX24} show that
\begin{equation}\label{eqn:SpectralRicciVerified}
-\frac{n-1}{n-2}\Delta u+\Ric\cdot u = 0,
\end{equation}
everywhere on $N$. Let
\[
M:=\{\,a\leq t\leq b\,\}\subset \mathbb S^1\times \mathbb S^{n-1}.
\]
Since
\[
H_{\{t=t_0\}}=\pm (n-1)\frac{f'(t_0)}{f(t_0)},
\]
where the sign depends on the choice of the unit normal on the boundary, and \(f'(a)=f'(b)=0\), we have
\[
H_{\partial M}=0
\qquad\text{on }\partial M.
\]

Moreover, since \(u=u(t)\), we also have
\begin{equation}\label{eqn:partialnu}
\partial_\nu u=0
\qquad\text{on }\partial M,
\end{equation}
because \(u'(t)=(2-n)f(t)^{1-n}f'(t)\).

Hence, by \eqref{eqn:SpectralRicciVerified}, and \eqref{eqn:partialnu}, we have \eqref{eqn:Lambda1Counter}. Since \(M\) has two boundary components, \(\pi_1(M,\partial M)\neq 0\).
Finally, by \eqref{eqn:RicciViolated} we have that for every $p\in\mathbb S^{n-1}$, $\mathrm{Ric}_{(c,p)}(\partial _t,\partial_t)=-(n-1)\frac{f''(c)}{f(c)}<0$, and hence $\mathrm{Ric}$ is not nonnegative on $M$. Thus, there is no Riemannian cover $([0,L]\times\Sigma,\mathrm{d}t^2+g_\Sigma)\to (M,g)$, where $\mathrm{Ric}_{g_\Sigma}\geq 0$, otherwise we would have $\mathrm{Ric}_{(M,g)}\geq 0$.
\end{proof}
\begin{proposition}\label{prop:HyperbolicCounterexample}
    There exists a complete noncompact surface $(M^2,g)$ with exactly two compact geodesic boundary components such that 
    \[
    \lambda_1(-4\Delta+\mathrm{Sect})\geq 0,
    \]
    but $M$ does not split isometrically as $([0,L]\times\Sigma,\mathrm{d}t^2+g_\Sigma)$, for a compact $\Sigma$, and $0<L<\infty$.
\end{proposition}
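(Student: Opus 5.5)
The plan is to take $M$ to be a hyperbolic pair of pants ($\mathrm{Sect}\equiv-1$) with two compact totally geodesic boundary components and one funnel end. We then check that $\lambda_1(-4\Delta+\mathrm{Sect})\geq 0$ reduces to the bottom of the $L^2$ spectrum of the Laplacian being at least $\tfrac14$. Non-splitting is immediate: a product $[0,L]\times\Sigma$ with compact $\Sigma$ is compact, while $M$ is not. (Alternatively, a product of the form $\mathrm{d}t^2+g_\Sigma$ with $\dim\Sigma=1$ is flat, while $\mathrm{Sect}\equiv-1$.) The whole content is therefore the spectral inequality.

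\emph{Construction.} Fix $\ell>0$ large. Let $X_\ell$ be the complete hyperbolic surface with three funnels obtained as the convex-cocompact quotient $\mathbb H^2/\Gamma_\ell$ by a Schottky group of rank $2$ whose convex core is a pair of pants with all three boundary geodesics of length $\ell$. Cut $X_\ell$ along two of the three closed geodesics $c_1,c_2$, and let $M$ be the piece containing the third funnel together with the convex core. Then $M$ is complete and noncompact, $\partial M=c_1\sqcup c_2$ is compact and geodesic, and $H_{\partial M}=0$.

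\emph{Spectral inequality.} By condition (i), we must show
\[
\int_M 4|\nabla\varphi|^2-\varphi^2\geq 0\quad\text{for all }\varphi\in C^1_c(M),
\]
with no boundary conditions imposed. Let $DM$ be the double of $M$ along $c_1,c_2$. Since the boundary is totally geodesic, $DM$ is a smooth complete hyperbolic surface; it is convex cocompact, of infinite area, with two funnels. Any $\varphi\in C^1_c(M)$ extends by reflection to a Lipschitz compactly supported $\hat\varphi$ on $DM$. Both integrals simply double under this extension, so it suffices to prove that the bottom of the spectrum satisfies $\lambda_0(DM)\geq \tfrac14$. By the Patterson--Sullivan / Lax--Phillips theorem, $\lambda_0(\mathbb H^2/\Gamma)=\tfrac14$ whenever the critical exponent satisfies $\delta(\Gamma)\leq\tfrac12$. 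We thus need $\delta(\Gamma_{DM})\leq \tfrac12$.

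\emph{Main obstacle.} The key step is choosing $\ell$ so that $\delta\leq\tfrac12$. The convex core of $DM$ is the double of the core of $M$, namely a genus-$0$ surface with four boundary geodesics, all of length $\ell$. As all boundary lengths tend to infinity, these surfaces have very thin cores (the seams between the long boundaries have length tending to $0$, like $e^{-\ell/4}$ up to constants). Known estimates for the Hausdorff dimension of the limit set then give $\delta\to 0$; see McMullen's results on Hausdorff dimension for Schottky groups, or the direct bound obtained by counting closed geodesics, all of length $\gtrsim \ell$ apart from at most exponentially many in the word length. Concretely, I would bound the Poincaré series $\sum_{\gamma\in\Gamma} e^{-s\,d(o,\gamma o)}$ for $s=\tfrac12$. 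For Schottky generators with large translation length and far-separated disks, ping-pong gives $d(o,\gamma o)\geq c\,\ell\,|\gamma|$ for reduced words $\gamma$. The series is then dominated by $\sum_k 3^k e^{-c\ell k/2}$, which converges for $\ell$ large, so $\delta\leq\tfrac12$.

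\emph{Conclusion and regularity.} Given $\lambda_0(DM)\geq\tfrac14$, the reflected test functions yield $\lambda_1(-4\Delta+\mathrm{Sect})\geq0$ on $M$. If a positive $u$ is wanted as in condition (ii), take the Fischer-Colbrie--Schoen exhaustion solution on $DM$ and average it with its reflection. The result is a positive reflection-invariant function satisfying $-4\Delta u-u\geq 0$, so $\partial_\nu u=0$ on $c_1\cup c_2$.
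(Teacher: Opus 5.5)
\textbf{There is a genuine gap, in the critical-exponent step.} The rest of your reduction is sound. Reflecting $C^1_c$ test functions into the double $DM$ and invoking Patterson--Sullivan ($\lambda_0\geq\tfrac14$ when $\delta\leq\tfrac12$) gives the spectral inequality on $M$, and non-splitting follows from noncompactness.

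The problem is the doubling. The seam of the pants joining $c_1$ to $c_2$ is an orthogeodesic of length $\approx 2e^{-\ell/4}$, as you note yourself. When you double $M$ along $c_1\sqcup c_2$, it closes up with its mirror image into a closed geodesic of $DM$ of length $\approx 4e^{-\ell/4}$. So $\Gamma_{DM}$ contains a hyperbolic element $h$ with translation length $\tau(h)\to 0$ as $\ell\to\infty$. As $\ell$ grows, $DM$ degenerates toward a cusped surface, not toward a thin Schottky surface. (Also, the convex core of $DM$ is a two-holed torus, not a four-holed sphere, and $\Gamma_{DM}$ is free of rank $3$; these slips are harmless.)

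This breaks your argument in two ways.
\begin{itemize}
\item The bound $d(o,\gamma o)\geq c\,\ell\,|\gamma|$ fails in every free basis. Indeed $|h^k|\geq |k|$, while $d(o,h^k o)\leq 2\,d(o,\mathrm{axis}(h))+|k|\,\tau(h)$.
\item $\delta(\Gamma_{DM})$ does not tend to $0$. For $o$ at the edge of the collar of $h$, $d(o,h^k o)\leq 2\log|k|+C$ for $1\leq |k|\lesssim \tau(h)^{-1}$. So $\langle h\rangle$ alone contributes $\gtrsim \tau(h)^{2s-1}\approx e^{(1-2s)\ell/4}$ to the Poincar\'e series at exponent $s<\tfrac12$. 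Now alternate such powers with an element $x\notin\langle h\rangle$ satisfying $d(o,xo)\leq \ell+C$, for instance the one represented by $c_1$. The resulting words are distinct, since $\langle h,x\rangle$ is free on $h,x$. The triangle inequality then gives divergence for every $s<\tfrac16$ once $\ell$ is large.
\end{itemize}

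So your argument does not establish $\delta(\Gamma_{DM})\leq\tfrac12$. The inequality may still hold for large $\ell$. At $s=\tfrac12$ the near-parabolic subgroup contributes only about $2\log(1/\tau(h))\approx \ell/2$, which one expects to be dominated by the $e^{-c\ell}$ cost of crossing the remaining thin parts. Proving it, though, needs a finer count that treats $\langle h\rangle$ separately, as for groups with parabolics, rather than a uniform linear ping-pong bound. Alternatively, change the construction so that no short geodesic appears after doubling. For example, take the third cuff much longer than the other two, so that the $c_1$--$c_2$ orthogeodesic, and hence its double, is long.

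The paper avoids the issue entirely. It takes $\Gamma=\langle A^N,B^N\rangle$ with $A,B$ fixed and $\rho\Gamma\rho^{-1}=\Gamma$ for a reflection $\rho$; there the linear displacement bound genuinely holds, so $\delta(\Gamma)<\tfrac12$. It then writes the positive solution of $-4\Delta F-F=0$ directly as the $\rho$-even Poincar\'e series of $P(\cdot,\xi_1)^{1/2}+P(\cdot,\xi_2)^{1/2}$. Finally it cuts along the $\rho$-fixed nonseparating geodesic. The Neumann condition comes from evenness, so no doubling is needed, and no appeal to the spectral theorem either.
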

\begin{proof}
The example we construct is a (symmetric) complete hyperbolic surface of genus one with one funnel end (topologically, a one-holed torus) cut open along one geodesic. Let $\mathbb D$ be the Poincaré disk, equipped with the hyperbolic metric
\[
g_{\mathbb H}=\frac{4|\mathrm{d}z|^2}{(1-|z|^2)^2}.
\]
For $\xi\in \partial\mathbb D$, define the Poisson kernel on $\mathbb D$
\[
P(z,\xi):=\frac{1-|z|^2}{|z-\xi|^2}.
\]
A standard computation gives
\[
\Delta P(\cdot,\xi)^s
=
s(s-1)P(\cdot,\xi)^s.
\]
In particular, for $s=1/2$,
\begin{equation}\label{eqn:CompPoisson}
-4\Delta P(\cdot,\xi)^{1/2}
-
P(\cdot,\xi)^{1/2}
=0.
\end{equation}

We now construct a rank-two Schottky group with a reflection symmetry. We refer the reader to \cref{fig:schottky-domain}.
Let $a_0\in\partial \mathbb D$. Let
$\mathcal G\subset\mathbb D$ be the geodesic joining $a_0$ to $-a_0$, and let
$\rho$ be reflection across $\mathcal G$. Notice $\rho=\rho^{-1}$.
Choose a hyperbolic isometry $A$ whose axis is $\mathcal G$, and which translates along $\mathcal G$. Then
\[
\rho A\rho^{-1}=A.
\]
Next, choose a geodesic $\mathcal A_B$ whose endpoints are $p$ and
$\rho(p)$, where $p\in\partial\mathbb D\setminus\{a_0,-a_0\}$, and choose a
hyperbolic translation $B$ along $\mathcal A_B$. Since $\rho$ exchanges
the two endpoints of $\mathcal A_B$, conjugation by $\rho$ preserves the
axis of $B$ but reverses its translation direction. Therefore,
\[
\rho B\rho^{-1}=B^{-1}.
\]

Let us define the group
\[
\Gamma:=\langle A,B\rangle.
\]
Recall that the critical exponent of $\Gamma$ is
\begin{equation}\label{eqn:deltaLambda}
\delta(\Gamma)
:=
\inf\left\{
s>0:
\sum_{\gamma\in\Gamma}
e^{-s d_{\mathbb H}(o,\gamma o)}
<+\infty
\right\},
\end{equation}
where $o\in\mathbb D$ is any fixed base point. This definition is independent
of $o$. More generally, if $x,y\in\mathbb D$ are fixed, then, by the triangle inequality, the value of $\delta(\Gamma)$ stays the same if we substitute the series
\[
\sum_{\gamma\in\Gamma} e^{-s d_{\mathbb H}(o,\gamma o)}\quad \text{with}\quad \sum_{\gamma\in\Gamma} e^{-s d_{\mathbb H}(x,\gamma y)},
\]
in \eqref{eqn:deltaLambda}.
Let us recall that the translation length of $A$ is $\tau(A):=\inf_{x\in \mathbb D}d_{\mathbb H}(x,Ax)$. This infimum is positive, and attained along the axis $\mathcal{G}$ of $A$. Likewise for $B$.

\begin{claim}
    If the translation lengths of $A$ and $B$ are sufficiently large, we have that $\Gamma$ is a torsion-free rank-two Schottky group, and $\delta(\Gamma)<\frac{1}{2}$.
\end{claim}

\begin{proof}[Proof of the claim]
    This is certainly known to experts in hyperbolic geometry, and in particular to those familiar with the Patterson-Sullivan formulation of the critical exponent in
\cite{PattersonExponent,PattersonLimit,SullivanDensity}. We include an argument for the sake of readability, leveraging the notes in \cite{QuintPS}. 

Fix $o\in\mathbb D$. By \cite[Lemma 4.6]{QuintPS}, after replacing $A$ and $B$
by sufficiently large powers $A^N$ and $B^N$, the group $\Gamma_N:=\langle A^N,B^N\rangle$
is free and discrete. Moreover, the same lemma gives a linear lower bound
for $d(o,\gamma o)$ in terms of the reduced word length of $\gamma$. Applying the estimate in \cite[Lemma~4.6]{QuintPS}, and using that $\tau(A^{\pm N})=N\tau(A)$ (and likewise for $B$), we
obtain constants $c>0$ and $C\geq 0$, independent of $N$, such that every
nontrivial reduced word $\gamma\in\Gamma_N$ of word length $m$ satisfies
\[
d_{\mathbb H}(o,\gamma o)\geq (cN-C)m .
\]
We now estimate the Poincaré series. The number of reduced words of length
$m$ in the free group on two generators is $4\cdot 3^{m-1}$. Hence, for
every $s>0$,
\[
\sum_{\gamma\in\Gamma_N} e^{-s d_{\mathbb H}(o,\gamma o)}
\leq
1+\sum_{m\geq1}4\cdot 3^{m-1}e^{-s(cN-C)m}.
\]
The series on the right-hand side converges whenever
\[
3e^{-s(cN-C)}<1,
\]
or equivalently whenever
\[
s>\frac{\log 3}{cN-C}.
\]
Therefore the critical exponent of $\Gamma_N$ satisfies
\[
\delta(\Gamma_N)\leq \frac{\log 3}{cN-C}.
\]
Hence, for $N$ sufficiently large, we can arrange for $
\delta(\Gamma_N)<\frac12$.
\end{proof}

Replacing $A$ and $B$ with sufficiently large powers, as in the proof of the claim above, and relabeling, we can assume $\delta(\Gamma)<1/2$.
The quotient
\[
\Sigma:=\mathbb D/\Gamma
\]
is a complete hyperbolic surface of genus one with one funnel end (topologically, a one-holed torus). With our choices, $\mathcal{G}$
projects to a simple nonseparating closed geodesic on $\Sigma$. We denote
its projection again by $\mathcal{G}$.

We denote by $\Lambda(\Gamma)$ the limit set, namely the set of accumulation
points in $\partial\mathbb D$ of one, equivalently every, orbit $\Gamma x$;
see \cite[\S4.3]{QuintPS}. Its complement $
\Omega(\Gamma):=\partial\mathbb D\setminus\Lambda(\Gamma)$
is called domain of discontinuity.
The relations
\[
\rho A\rho^{-1}=A,
\qquad
\rho B\rho^{-1}=B^{-1},
\]
imply that
\[
\rho\Gamma\rho^{-1}=\Gamma.
\]
Thus $\rho$ preserves the limit set $\Lambda(\Gamma)$ and also the domain
of discontinuity $
\Omega(\Gamma)$.

\begin{figure}[htbp]
\centering
\begin{tikzpicture}[scale=3.0,line cap=round,line join=round]
  \def\r{0.2857}
  \coordinate (O) at (0,0);
  \coordinate (Ca) at (-0.7354,0.7354);
  \coordinate (Cap) at (0.7354,-0.7354);
  \coordinate (Cb) at (-0.7354,-0.7354);
  \coordinate (Cbp) at (0.7354,0.7354);

  \begin{scope}
    \clip (O) circle (1);
    \fill[gray!9] (O) circle (1);
    \fill[white] (Ca) circle (\r);
    \fill[white] (Cap) circle (\r);
    \fill[white] (Cb) circle (\r);
    \fill[white] (Cbp) circle (\r);
    \draw[purple!65!black,dashed,very thick] (-0.7071,0.7071) -- (0.7071,-0.7071);
    \draw[green!45!black,very thick] (-0.7071,-0.7071) -- (0.7071,0.7071);
    \draw[red!75!black,very thick] (Ca) circle (\r);
    \draw[red!75!black,very thick] (Cap) circle (\r);
    \draw[blue!75!black,very thick] (Cb) circle (\r);
    \draw[blue!75!black,very thick] (Cbp) circle (\r);
  \end{scope}

  \draw[black,thick] (O) circle (1);

  \node[red!75!black] at (-0.71,0.36) {\(b\)};
  \node[black!75!black] at (0.7854,0.7854) {\(a_0\)};
  \node[black!75!black] at (-0.7854,-0.7854) {\(-a_0\)};
  \node[black!75!black] at (-0.7854,0.7854) {\(\rho(p)\)};
  \node[black!75!black] at (0.7854,-0.7854) {\(p\)};
  \node[red!75!black] at (0.71,-0.36) {\(b'\)};
  \node[blue!75!black] at (-0.71,-0.36) {\(a\)};
  \node[blue!75!black] at (0.71,0.36) {\(a'\)};
  \node[green!35!black,below right] at (-0.08,-0.08) {\(\mathcal{G}\)};
  \node[purple!65!black,above right] at (0.09,-0.09) {\(\mathcal{A}_B\)};

  \fill[black] (0:1) circle (0.018);
  \node[right] at (0:1.06) {\(\xi_1\)};
  \fill[black] (90:1) circle (0.018);
  \node[above] at (90:1.06) {\(\xi_2\)};
  \node at (0,-1.55) {\(\text{Side pairings: } a\xrightarrow{A}a',\quad b\xrightarrow{B}b'\)};
\end{tikzpicture}
\caption{A symmetric classical Schottky fundamental domain (in grey)
for $\Sigma=\mathbb D/\Gamma$. The red (geodesic) sides are exchanged by
$\rho$ and paired by $B$; the blue (geodesic) sides are fixed setwise by $\rho$
and paired by $A$. The green diameter $\mathcal{G}$ is used for the cut.}
\label{fig:schottky-domain}
\end{figure}

Since the endpoints $a_0$ and $-a_0$ of $\mathcal G$ are the fixed points of
$A$, they belong to $\Lambda(\Gamma)$. Choose
$
\eta\in \Omega(\Gamma)
$
with $\eta\notin\{a_0,-a_0\}$ 
and define
\[
\xi_1:=\eta,
\qquad
\xi_2:=\rho(\eta).
\]
Since $\rho$ preserves $\Omega(\Gamma)$, we have $
\xi_1,\xi_2\in\Omega(\Gamma)$.
Define
\[
f(z):=P(z,\xi_1)^{1/2}+P(z,\xi_2)^{1/2}.
\]
Then $f>0$, and by \eqref{eqn:CompPoisson},
\[
-4\Delta f-f=0
\qquad
\text{on } \mathbb D.
\]
Moreover, since $\rho$ exchanges $\xi_1$ and $\xi_2$, and $P(\rho z,\rho\xi)=P(z,\xi)$, we have
\[
f\circ\rho=f.
\]

We now define the Poincaré series
\begin{equation}\label{eqn:PoincareSeries}
F(z):=\sum_{\gamma\in\Gamma} f(\gamma z).
\end{equation}
\begin{claim}
    $F(z)$ converges locally uniformly on $\mathbb D$. 
\end{claim}
\begin{proof}[Proof of claim]
We provide a proof for the sake of readability. Let
$K\Subset\mathbb D$, and fix
$\xi\in\Omega(\Gamma)$. We first prove that the orbit $\Gamma K$ stays a
positive Euclidean distance away from $\xi$. Indeed, suppose not. Then there
exist $\gamma_j\in\Gamma$ and $z_j\in K$ such that
\[
\gamma_j z_j\to \xi.
\]
After passing to a subsequence, $z_j\to z_\infty\in K$. If the sequence
$\gamma_j$ were eventually constant (up to subsequences), then $\gamma_j z_j$ would converge to
a point of $\mathbb D$, not to the boundary point $\xi$. Thus
$\gamma_j\to\infty$ in $\Gamma$. Since accumulation points on
$\partial\mathbb D$ of sequences $\gamma_j z_\infty$, with
$\gamma_j\to\infty$, belong to the limit set $\Lambda(\Gamma)$, and since
\[
d_{\mathbb H}(\gamma_j z_j,\gamma_j z_\infty)
=
d_{\mathbb H}(z_j,z_\infty)\to 0,
\]
the points $\gamma_j z_j$ and $\gamma_j z_\infty$ converge to the same
boundary point. Therefore $\xi\in\Lambda(\Gamma)$, contradicting
$\xi\in\Omega(\Gamma)$. Hence there exists $c:=c(K,\xi)>0$ such that
\[
|\gamma z-\xi|\geq c
\qquad
\text{for all } z\in K,\ \gamma\in\Gamma.
\]

It follows that, for $C:=C(K,\xi)$
\[
P(\gamma z,\xi)^{1/2}
=
\frac{(1-|\gamma z|^2)^{1/2}}{|\gamma z-\xi|}
\leq
C(1-|\gamma z|^2)^{1/2}.
\]
In the Poincaré disk,
\[
e^{-d_{\mathbb H}(0,w)}
=
\frac{1-|w|}{1+|w|} \Rightarrow
1-|w|^2
=
(1+|w|)^2 e^{-d_{\mathbb H}(0,w)}
\leq
4e^{-d_{\mathbb H}(0,w)}.
\]
Applying this with $w=\gamma z$, we get
\[
P(\gamma z,\xi)^{1/2}
\leq
C e^{-\frac12 d_{\mathbb H}(0,\gamma z)}.
\]
Let $z_0\in \mathbb D$. Since $K$ has bounded hyperbolic diameter, using the triangle inequality, we thus conclude
\begin{equation}\label{eqn:KeyEstimate}
P(\gamma z,\xi)^{1/2}
\leq
C(K,\xi,z_0) e^{-\frac12 d_{\mathbb H}(0,\gamma z_0)}
\end{equation}
for all $z\in K$ and all $\gamma\in\Gamma$.

Since $\delta(\Gamma)<1/2$, we have
\[
\sum_{\gamma\in\Gamma}
e^{-\frac12 d_{\mathbb H}(0,\gamma z_0)}
<+\infty.
\]
Therefore, for every $\xi\in\Omega(\Gamma)$, by \eqref{eqn:KeyEstimate} we have
\[
\sum_{\gamma\in\Gamma}
\sup_{z\in K}P(\gamma z,\xi)^{1/2}
<+\infty.
\]
Applying this estimate to both $\xi=\xi_1$ and $\xi=\xi_2$, we conclude
that
\[
\sum_{\gamma\in\Gamma}
\sup_{z\in K} f(\gamma z)
<+\infty.
\]
Thus $F$ converges uniformly on $K$. Since $K\Subset\mathbb D$ was
arbitrary, $F$ converges locally uniformly on $\mathbb D$.
\end{proof}

Let $F_N$ be any sequence of finite partial sums of the Poincaré series defined in \eqref{eqn:PoincareSeries}. Since each summand
$f\circ\gamma$ satisfies
\[
-4\Delta(f\circ\gamma)-f\circ\gamma=0,
\]
we have $
-4\Delta F_N-F_N=0$.
Passing to the locally uniform limit, $F$ is a distributional solution of
\begin{equation}\label{eqn:LaplacianF}
-4\Delta F-F=0.
\end{equation}
By elliptic regularity $
F\in C^\infty(\mathbb D)$,
and local elliptic estimates imply that the convergence is smooth on compact
subsets. Moreover, $
F>0$.

Now, notice that the function $F$ is $\Gamma$-invariant by construction. 
Therefore $F$ descends to a smooth positive function, still denoted by $F$, on $\Sigma$,
and satisfies
$-4\Delta F-F=0$ on $\Sigma$.

We now check the reflection symmetry. Since $f\circ\rho=f$ and
$\rho\Gamma\rho^{-1}=\Gamma$, we have
\[
\begin{aligned}
F(\rho z) &=
\sum_{\gamma\in\Gamma} f(\gamma\rho z) =
\sum_{\gamma\in\Gamma} f\bigl(\rho(\rho\gamma\rho^{-1})z\bigr) \\
&=
\sum_{\gamma\in\Gamma} f\bigl((\rho\gamma\rho^{-1})z\bigr) =
F(z).
\end{aligned}
\]
Thus, $F$ is even with respect to the reflection $\rho$. The fixed-point set
of $\rho$ is the geodesic $\mathcal G$. Therefore
\begin{equation}\label{eqn:PartialnuF}
\partial_\nu F=0
\qquad
\text{along } \mathcal G,
\end{equation}
in $\Sigma$, where $\nu$ is either unit normal to $\mathcal G$.

Because $\mathcal G$ is a simple nonseparating closed geodesic on $\Sigma$, cutting $\Sigma$ open along $\mathcal G$
produces a connected complete hyperbolic surface $M$ with exactly two compact totally
geodesic boundary components (and one funnel end). The two boundary components
are the two sides of the same geodesic $\mathcal G$. By \eqref{eqn:LaplacianF} and \eqref{eqn:PartialnuF}, the restriction of $F$ to $M$ satisfies
\[
-4\Delta F-F=0
\qquad
\text{on } M,
\]
and
\[
\partial_\nu F=0
\qquad
\text{on } \partial M.
\]

Since $\operatorname{Sect}\equiv -1$, this is equivalently
\[
(-4\Delta+\operatorname{Sect})F=0
\qquad
\text{on } M,
\]
with Neumann boundary condition. Thus we conclude that $\lambda_1(-4\Delta+\mathrm{Sect})\geq 0$, as desired. Finally, by construction, $M$ does not split isometrically as $[0,L]\times\Sigma$ with a compact $\Sigma$, as desired.
\end{proof}
\section{The rigidity statement: Proof of \cref{thmSplitting}}\label{sec:Rigid}

    This section is aimed at proving \cref{thmSplitting}. In case $\gamma=0$, the theorem reduces to the classical Kasue's rigidity result \cite[Theorem B(1)]{Kasue83}. Therefore, it is enough to prove the following. We expect that the same approach gives spectral generalizations of the results in \cite{CrokeKleiner}. We will not discuss this here, since this lies outside the scope of the present paper.
    \begin{theorem}\label{ThmRigid}
Let $n\geq 2$. Let $(M^n,g)$ be a possibly noncompact manifold with disconnected boundary. Assume at least one boundary component is compact. Let $\nu$ be the outward unit normal to the boundary. Let $0<\gamma<\frac{n-1}{n-2}$ if $n\geq 3$, or $0< \gamma<4$ if $n=2$. Assume there is $u\in C^{2,\alpha}(M)$, $u>0$, such that 
        \[
        -\gamma\Delta u+\mathrm{Ric}\cdot u \geq 0, \quad \text{on $M$}, \qquad H_{\partial M}+\gamma u^{-1}\langle \nabla u,\nu\rangle\geq  0, \quad \text{on $\partial M$},
        \]
        where $H_{\partial M}$ is computed with respect to the outward unit normal $\nu$ to $\partial M$.
        
        Then $u$ is constant and $M$ is isometric to $([0,L]\times \Sigma,\mathrm{d}t^2+g_\Sigma)$, where $\Sigma$ is compact with $\mathrm{Ric}_{g_\Sigma}\geq 0$.
    \end{theorem}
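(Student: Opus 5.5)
Fix a compact boundary component $\Sigma_1$ and set $C_1:=\Sigma_1$, $C_2:=\partial M\setminus\Sigma_1$, which is nonempty because $\partial M$ is disconnected. We work with $E(\beta)=\int_\beta u^\gamma\,\mathrm{d}\ell_g$, that is, with length for $\tilde g=u^{2\gamma}g$. The goal is to show that every point of $M$ lies on an $E$-minimizer joining $C_1$ to $C_2$. Then \cref{lemgradienvanishes} gives $\nabla u=0$ everywhere, so $u$ is constant. With $u$ constant the hypotheses become $\mathrm{Ric}\geq 0$ and $H_{\partial M}\geq 0$, and Kasue's theorem \cite[Theorem B(1)]{Kasue83} yields the splitting $[0,L]\times\Sigma$ with $\mathrm{Ric}_{g_\Sigma}\geq0$. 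Here $\Sigma\cong\Sigma_1$ is compact.

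\emph{Step 1: existence of a minimizer.} Let $D:=\inf E$ over compact curves from $C_1$ to $C_2$, and take a minimizing sequence $\beta_j$. The danger is that the $\beta_j$ escape to infinity, or that their $g$-length blows up while $E$ stays bounded. Suppose this happens. By Arzelà--Ascoli in $\tilde g$-arclength, using compactness of $C_1$ to fix the starting points, a diagonal argument produces a limit curve $\sigma:[0,\infty)\to M$. It starts on $C_1$, has infinite $g$-length and $E(\sigma)\leq D<\infty$. Moreover, every initial segment $\sigma|_{[0,R]}$ is $E$-minimizing among curves from $C_1$ to $\sigma(R)$. Indeed, a strictly shorter competitor could be spliced into $\beta_j$ for large $j$ to beat $D$; this uses a local $\tilde g$-geodesic convexity or triangle inequality argument. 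We may also assume $\sigma$ has no interior boundary contact. If it touched $\partial M$, then either it already reaches $C_2$, which contradicts escape, or it touches $C_1$ again, and we restart from the last contact point. This contradicts \cref{lemfiniteenergyfintelength}, which applies because $\gamma<\frac{n-1}{n-2}$ (or $\gamma<4$ when $n=2$). Hence a minimizer $\sigma_0$ of finite length exists. Its interior avoids $\partial M$ (otherwise we shorten it), so \cref{lemgradienvanishes} applies to it.

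\emph{Step 2: geodesic capturing.} This follows \cite{Kasue83} and the spectral splitting arguments. The set $\mathcal S$ of points lying on some $E$-minimizer between $C_1$ and $C_2$ is closed, by the same compactness as in Step 1, and nonempty. We show it is open. Take $p$ on a minimizer $\sigma_0$ and a point $q$ near $p$. Perturb the data so that $q$ is forced onto a minimizer. The intended way is to modify the weight $u^\gamma$ to $u^\gamma(1+\varepsilon\phi)$ with a small bump $\phi$ centered at $q$ (or to perturb the problem otherwise) so that minimizers of the perturbed functional pass near $q$. As the perturbation vanishes these minimizers converge, again by the Step 1 compactness, to $E$-minimizers through $q$. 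A cleaner alternative is the following. Along $\sigma_0$ we already know from \cref{lemgradienvanishes} that $\nabla u=0$, that $\gamma\Delta u=\mathrm{Ric}\cdot u$, and that the boundary terms vanish, so the second-variation form in \cref{lem:SecondVariationGeneral} with $\psi\equiv1$ is zero. Hence the constant field is a Jacobi field. The normal parallel fields $e_i$ then generate Jacobi fields, and the implicit function theorem produces a foliation of a neighborhood of $\sigma_0$ by $E$-critical curves. Each of these has energy $\leq D$, by a comparison of energies along the family using that the second variation vanishes to all relevant orders, so each is a minimizer. This gives openness. Connectedness of $M$ then gives $\mathcal S=M$.

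The main obstacle is Step 2, namely the capturing argument that minimizers fill an open set. The difficulty is to make the perturbation argument rigorous while preventing escape to infinity of the perturbed minimizing sequences. For this I would rerun Step 1 with \cref{lemfiniteenergyfintelength} applied to the perturbed weight. Its hypotheses only need to hold along the curve, and they can be arranged by choosing the perturbation supported in a compact set where the inequalities hold with room, or by taking a perturbation that increases the weight.
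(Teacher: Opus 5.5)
Your outline matches the paper's: a minimizer from $\Sigma_1$ to $\partial M\setminus\Sigma_1$ via compactness and \cref{lemfiniteenergyfintelength}, closedness of the set of points on minimizers, a capturing step, then \cref{lemgradienvanishes} and Kasue. Step 1 is essentially \cref{lem:existenceofminimizers}, except that you should parameterize by $g$-arclength, since $\tilde g$ need not be complete and $\tilde g$-Lipschitz bounds give no $g$-equicontinuity. The genuine gap is Step 2, which is the core of the proof.

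In your first route, nothing forces minimizers of $u^\gamma(1+\varepsilon\phi)$ to pass near $q$. Energy comparison alone only shows that perturbed minimizers meet the region where the weight was lowered. To be sure a small perturbation lowers the infimum at all, that region must meet a minimizer. A bump away from all minimizers is not felt for small $\varepsilon$, because curves through $q\notin\mathcal S$ cost at least $D+\delta_q$ with $\delta_q>0$. So the limit tells you nothing new.

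The missing idea is to let rigidity do the capturing. The paper takes $p$ with $d_g(p,\sigma)=d_g(p,\sigma(s))=2r$ and $B_{2r}(p)$ free of minimizer points. It replaces $u$ by the function $u_{r,t}$ of \cref{lemNewu} (from Hong--Wang), which satisfies:
$u_{r,t}<u$ in $B_{3r}(p)$; $u_{r,t}=u$ outside; the boundary inequality is preserved; and $-\gamma\Delta u_{r,t}+\Ric\cdot u_{r,t}>0$ \emph{strictly} on the annulus $B_{3r}(p)\setminus B_r(p)$.
Since $B_{3r}(p)$ meets $\sigma$, the perturbed infimum drops, so perturbed minimizers enter $B_{3r-\varepsilon}(p)$. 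If one of them avoided $\overline{B_r(p)}$, then \cref{lemgradienvanishes} applied with $u_{r,t}$ would force equality on the annulus, a contradiction. This holds for every small $t$, so letting $t\to0$ gives an $E$-minimizer meeting $\overline{B_r(p)}$.

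The same contradiction hypothesis supplies the hypotheses of \cref{lemfiniteenergyfintelength} for $u_{r,t}$ along a possible escaping limit. The auxiliary $u_{r,t,h}$, large far from $\sigma$, provides minimizers at each stage. Your proposed fixes for escape do not work. The inequalities need not hold with any room: in the model $[0,L]\times\Sigma$ both are equalities everywhere. And increasing the weight repels minimizers from $q$.

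Your second route fails too. At $\sigma_0$, the Jacobi operator with its Robin boundary condition has a kernel containing the $n-1$ fields $u^{-\gamma/(n-1)}e_i$. So the implicit function theorem does not produce nearby critical curves. Whether these Jacobi fields integrate is exactly what must be proved, and ``the second variation vanishes to all relevant orders'' simply assumes it. A workable version would construct curves of constant prescribed weighted curvature. It would then use the spectral inequality to show that the constant vanishes and that energy is monotone along the family, as in foliation proofs of rigidity for area-minimizing surfaces. None of this is in your sketch. Moreover, \cref{lemgradienvanishes} applies only to minimizers, so it gives no information on $u$ along the neighboring curves.
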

We first need some preliminaries. We will carefully adapt the arguments of \cite{HongWang} to our setting.

\begin{lemma}\label{lemNewu}
Let $n\geq 2$. Let $(M^n,g)$ be a possibly noncompact manifold with boundary with unit outward normal $\nu$. Let $\gamma\geq 0$ and assume there is $u\in C^{2,\alpha}(M)$, $u>0$, such that 
\[
        -\gamma\Delta u+\mathrm{Ric} \cdot u\geq 0, \quad \text{on $M$}, \qquad H_{\partial M}+\gamma u^{-1}\langle \nabla u,\nu\rangle \geq 0, \quad \text{on $\partial M$},
\]
where $H_{\partial M}$ is computed with respect to the outward unit normal $\nu$ to $\partial M$.

Then for any $p_0 \in M$, there exist positive constants 
$r_0 = r_0(n,M,p_0)$ and $t_0 = t_0(n,M,p_0)$ such that the following holds. For any $r \leq r_0$, any $p \in \overline{B_{2r}(p_0)}$, and any $0 < t \leq t_0$ there exists a positive function $u_{r,t}\in C^{2,\alpha}(M)$ with the following conditions:
\begin{enumerate}[label=(\roman*)]
    \item $u_{r,t} \to u$ in the $C^2$ sense as $r,t \to 0^+$, and $u_{r,t} \to u$ in $C^{2,\alpha}$ for any fixed $r>0$ as $t \to 0^+$.
    \item $u_{r,t} = u$ on $M \setminus B_{3r}(p)$, and $u_{r,t} < u$ in $B_{3r}(p)$.
    \item $-\gamma \Delta u_{r,t} + \mathrm{Ric}\cdot u_{r,t} > 0 \quad \text{in } B_{3r}(p)\setminus B_r(p)$.
    \item $H+\gamma u_{r,t}^{-1}\langle \nabla u_{r,t},\nu\rangle \geq 0 \quad \text{on } \partial M$.
    \item $u_{r,t} > (1-t)  u > \frac{1}{10}u$ on $M$.
\end{enumerate}
\end{lemma}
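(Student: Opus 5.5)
Plan: construct $u_{r,t}$ as a multiplicative perturbation $u_{r,t}:=u\,(1-t\,\phi_{r,p})$, where $\phi_{r,p}$ is a radial bump concentrated in the annulus, following the construction in \cite{HongWang}. Choose $r_0$ smaller than the injectivity radius and the convexity radius near $p_0$, and smaller than the scale at which boundary normal coordinates are defined near $\partial M$ when $p_0$ is close to $\partial M$. Let $\rho:=d(p,\cdot)$, which is smooth on $B_{3r}(p)\setminus\{p\}$. Take $\phi_{r,p}=h(\rho)$ with $h\in C^\infty([0,\infty))$, $0\le h\le 1$, $h>0$ on $[0,3r)$, $h\equiv 0$ on $[3r,\infty)$, $h$ constant near $0$ (for smoothness at $p$), and $h$ strictly decreasing and strictly concave-type on $[r,3r)$, for instance $h(s)=\exp\bigl(-A/(3r-s)\bigr)$ near $3r$ with $A$ large. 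Then (ii) and (v) are immediate for $t\le t_0<9/10$, since $u_{r,t}=u$ outside $B_{3r}(p)$ and $u(1-t)\le u_{r,t}<u$ inside. For (i): for fixed $r$, $\|u_{r,t}-u\|_{C^{2,\alpha}}\le Ct\|\phi_{r,p}\|_{C^{2,\alpha}}\to 0$ as $t\to0$. For joint convergence in $C^2$ as $r,t\to0$, rescale by taking $t=t(r)$ with $t\|\phi_{r,p}\|_{C^2}\to0$, which is needed only along the limits used later; I would state the convergence in this coupled sense.

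For (iii): compute
\[
-\gamma\Delta u_{r,t}+\Ric\, u_{r,t}=(1-t\phi)(-\gamma\Delta u+\Ric\,u)+\gamma t\bigl(u\Delta\phi+2\langle\nabla u,\nabla\phi\rangle\bigr).
\]
The first term is $\ge0$. On $B_{3r}\setminus B_r$ we need $u\Delta\phi+2\langle\nabla u,\nabla\phi\rangle>0$, i.e.
\[
h''+h'\Delta\rho+2h'u^{-1}\partial_\rho u>0.
\]
Since $h'<0$, $\Delta\rho\le (n-1)/\rho+C\rho\le C'/r$, and $u^{-1}|\nabla u|\le C$ locally, it suffices that $h''>|h'|(C'/r+C)$. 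This is arranged by the exponential choice: $h'' / |h'|$ blows up like $A/(3r-s)^2$, which dominates $C'/r$ once $A\gtrsim r$ suitably. On the portion near $s=r$, I would glue using $h(s)=e^{-Ks/r}$ type profiles, which give $h''=(K/r)|h'|$ with $K$ large, and choose $K$, $A$ uniformly in $p\in\overline{B_{2r}(p_0)}$ by the uniform local geometry. When $\gamma=0$ the statement (iii) requires $\Ric>0$ there, so I would treat $\gamma>0$ (the only case used later), or note that the first-term inequality must then come from elsewhere.

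The main obstacle is (iv) when $B_{3r}(p)$ meets $\partial M$. There
\[
H+\gamma u_{r,t}^{-1}\partial_\nu u_{r,t}=H+\gamma u^{-1}u_\nu-\gamma\frac{t\,\partial_\nu\phi}{1-t\phi},
\]
so I need $\partial_\nu\phi\le0$ on $\partial M$. Here $\rho$ is the ambient distance to $p$, and its normal derivative need not have a sign. I would therefore replace $\rho$ by a modified radial function adapted to the boundary: in Fermi coordinates $(x',s)$ near $\partial M$, with $s$ the distance to $\partial M$, use $\tilde\rho:=\bigl(|x'-x'(p)|^2+(s-s(p))^2\bigr)^{1/2}$ in a reflected form, or more simply use $\tilde\rho$ built from the doubled metric across $\partial M$ and symmetrize, so that $\partial_\nu\tilde\rho$ has the sign $\ge0$ wherever $p$ lies on the inner side. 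Then $\partial_\nu\phi=h'\partial_\nu\tilde\rho\le0$, giving (iv), while the Laplacian estimate in (iii) persists with $\Delta\tilde\rho\le C/\tilde\rho$ by smoothness of Fermi coordinates at scale $r_0$. If this sign cannot be arranged, the fallback is to take $p$ with $B_{3r}(p)$ interior or to add a boundary-defining correction $-t\epsilon\, s$ term to force the normal derivative sign.
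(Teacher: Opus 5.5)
Your construction is the paper's. There the lemma is obtained by importing Hong--Wang's perturbation $u_{r,t}=u(1+tv_r)$, and the only computation added is $H+\gamma u_{r,t}^{-1}\langle\nabla u_{r,t},\nu\rangle=H+\gamma u^{-1}u_\nu+\gamma\frac{t}{1+tv_r}\langle\nabla v_r,\nu\rangle$. This is your formula with $v_r=-\phi$.

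Your interior computation for (iii) is correct. No gluing is needed: $h(s)=e^{-A/(3r-s)}$ gives $h''>(C/r)|h'|$ on all of $[r,3r)$ once $A>4r(1+C)$. Note that this is convexity, not concavity.

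Your remark on $\gamma=0$ is also correct. In that case (iii) forces $\Ric>0$ on the annulus, which fails e.g.\ for flat $[0,1]\times T^{n-1}$ with $u\equiv 1$. So the lemma genuinely needs $\gamma>0$, which is the only case in which it is used.

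You also do not need to weaken (i). Items (ii)--(v) are unchanged if $\phi$ is multiplied by a constant in $(0,1]$, so normalize $\|\phi_{r,p}\|_{C^2}\le 1$ and $\sup\phi_{r,p}<1$. The second condition is needed anyway, since the strict inequality in (v) fails if $h$ attains $1$. Then $\|u_{r,t}-u\|_{C^2}\le Ct$ uniformly in $r$ and $p$, with $t_0$ independent of $r$. The standard bump $e^{-1/(9r^2-\rho^2)}$ does all of this at once.

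The genuine gap is at $\partial M$. It cannot be avoided, since in the application $p_0=\sigma(0)\in\Sigma_1$.

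\begin{itemize}
\item Your starting claim that $d_g(p,\cdot)$ is smooth on $B_{3r}(p)\setminus\{p\}$ is false when the ball meets non-convex parts of $\partial M$. For $M=\mathbb R^2\setminus B_R(0)$ and $p\in\partial M$, the level sets of the intrinsic distance in the shadow of $p$ are involutes of the circle. Their curvature blows up at $\partial M$, so the intrinsic distance is not $C^2$ up to the boundary.
\item In the same example, the distance of the extension, $|x-p|$, has $\partial_\nu|x-p|<0$ at every $x\in\partial M\setminus\{p\}$.
\end{itemize}

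Your Fermi-coordinate function is the right fix, and it works, so commit to it. Take $\nu=-\partial_s$ and let $x'$ be normal coordinates on $\partial M$ centered at the foot point of $p$. Then $\tilde\rho^2=|x'|^2+(s-s(p))^2$ satisfies $\partial_\nu\tilde\rho=s(p)/\tilde\rho\ge 0$ on $\{s=0\}$, $|\nabla\tilde\rho|=1+O(r)$, and $\Delta\tilde\rho\le C/\tilde\rho$. This gives (iii) and (iv).

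Drop the alternatives:
\begin{itemize}
\item the doubled metric is only Lipschitz across $\partial M$;
\item interior balls are excluded by the statement;
\item an unlocalized $-t\varepsilon s$ term violates (ii).
\end{itemize}

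Be aware that this produces $u_{r,t}$ supported in $\{\tilde\rho<3r\}$, with (iii) holding on $\{r\le\tilde\rho<3r\}$, not on the metric balls of (ii)--(iii). Since $(1-Cr)\tilde\rho\le d_g(p,\cdot)\le(1+Cr)\tilde\rho$, this is enough for the capturing argument in \cref{propNearbyMinimizer}. That argument only uses that $\sigma(s)$ lies in the support and that the inner set lies in $B_{2r}(p)$. Still, you should either state the lemma for these comparable sets or explain how to obtain exact metric balls.
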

\begin{proof}
    It follows from \cite[Lemma 3.3]{HongWang}. The inequality $-\gamma\Delta u+\mathrm{Ric}\cdot u\geq 0$ enters into play only at Line 8 from the bottom of page 11 in \cite{HongWang}. The more general version of item (iv) above comes from the computations in \cite{HongWang}. In particular, using the first computation at page 12 in \cite{HongWang}, calling $v_r$ the function constructed there, we have 
    \[
    H+\gamma u_{r,t}^{-1}\langle \nabla u_{r,t},\nu\rangle = H+\gamma u^{-1}\langle \nabla u,\nu\rangle + \gamma\frac{t}{1+tv_r}\langle \nabla v_r,\nu\rangle\geq 0.
    \]
    The last item (v) follows immediately from the construction  in \cite{HongWang}, if $r_0$ is chosen small enough.
\end{proof}

{
\begin{lemma}\label{lem:InteriorDoesNotTouchBoundaryAndOtherProperties}
Let $n\geq 2$. Let $(M^n,g)$ be a possibly noncompact manifold with disconnected boundary. Assume at least one boundary component is compact and denote it as $\Sigma_1$. Let $w\in C^{2,\alpha}(M)$ be a positive function. For a compact Lipschitz curve $\beta$ in $M$, denote  $E_w(\beta) := \int_\beta w \mathrm{d}\ell_g$, and define
\begin{equation}\label{eqn:MatchalCDefinition}
\mathcal{C}:=\Big\{\beta\in \mathrm{Lip}_g([a,b];M):\beta(a)\in \Sigma_1,\, \beta(b) \in \partial M \setminus \Sigma_1\Big\}.
\end{equation}
Let $\{\sigma_i\}\subset\mathcal{C}$ be such that $\lim_{i\to \infty} E_w(\sigma_i)=\inf_{\beta\in\mathcal{C}} E_w(\beta)=:m<\infty$. Parameterize $\sigma_i:[0,L_i]\to M$ by $g$-arclength. Assume $L=\lim L_i>0$ exists; let $I:=[0,L]$ if $L<\infty$, and $I:=[0,\infty)$ otherwise.
Suppose $\sigma_i$ converges to $\sigma:I\to M$ locally uniformly\footnote{Notice $\Sigma_1$ is compact. Hence, by Arzelà--Ascoli, given $\{\sigma_i\}$ we can always assume the existence of $L$ and $\sigma$, up to extracting subsequences. Moreover, since $d_g(\Sigma_1,\partial M\setminus \Sigma_1)>0$, we have $L>0$.}. We have the following.
\begin{enumerate}
    \item $\sigma(0)\in\Sigma_1$, and $E_w(\sigma)\leq E_w(\beta)$, for every $\beta\in\mathcal{C}$;
    \item $\sigma(\mathrm{int}(I))\cap \partial M=\emptyset$;
     \item For every $R<\sup I$, $\sigma|_{[0,R]}$ minimizes $E_w$ among compact curves connecting $\Sigma_1$ to $\sigma(R)$;
     \item If $L<\infty$, then $\sigma(L)\in \partial M\setminus\Sigma_1$. In particular, $\sigma\in\mathcal{C}$ is a minimizer of $E_w$ in $\mathcal{C}$;
     \item If $L=\infty$, then $\sigma$ has infinite length.
\end{enumerate}
As a consequence, $\sigma$ is a free boundary critical point of $E_w$, and thus it is $C^{3,\alpha}$-regular.
\end{lemma}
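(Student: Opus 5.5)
Since $w$ is bounded above and below by positive constants on compact sets, $E_w$ and $g$-length are locally comparable. The proof rests on lower semicontinuity of $E_w$ under locally uniform convergence of $1$-Lipschitz curves, together with cut-and-paste comparisons against the minimizing sequence. Since $\sigma_i(0)\in\Sigma_1$, $\Sigma_1$ is compact and closed, and $\sigma_i\to\sigma$ locally uniformly, we get $\sigma(0)\in\Sigma_1$. The limit $\sigma$ is $1$-Lipschitz. For every $R<\sup I$, lower semicontinuity of the weighted length on $[0,R]$ (locally uniform convergence, $w$ continuous) gives
\[
E_w(\sigma|_{[0,R]})\leq\liminf_i E_w(\sigma_i|_{[0,R]})\leq\liminf_i E_w(\sigma_i)=m.
\]
Letting $R\uparrow\sup I$ yields $E_w(\sigma)\leq m$, which proves (1) in the sense that the energy of $\sigma$ does not exceed any competitor.

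For (3), argue by contradiction. Suppose some curve $\beta$ joins $\Sigma_1$ to $\sigma(R)$ with $E_w(\beta)\leq E_w(\sigma|_{[0,R]})-2\delta$. Build competitors $\tilde\beta_i$ by concatenating three pieces: $\beta$, a short $g$-geodesic from $\sigma(R)$ to $\sigma_i(R)$ of length $o(1)$, and $\sigma_i|_{[R,L_i]}$. Then
\[
E_w(\tilde\beta_i)\leq E_w(\sigma|_{[0,R]})-2\delta+o(1)+E_w(\sigma_i)-E_w(\sigma_i|_{[0,R]}).
\]
By lower semicontinuity, $\liminf E_w(\sigma_i|_{[0,R]})\geq E_w(\sigma|_{[0,R]})$, so $\limsup E_w(\tilde\beta_i)\leq m-2\delta$, contradicting the definition of $m$. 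This comparison also shows $E_w(\sigma_i|_{[0,R]})\to E_w(\sigma|_{[0,R]})$.

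For (4), when $L<\infty$ we have $\sigma(L)=\lim\sigma_i(L_i)$ by equicontinuity. This point lies in $\partial M\setminus\Sigma_1$, which is closed because $\Sigma_1$ is a connected component. Hence $\sigma\in\mathcal C$ with $E_w(\sigma)\leq m$, so $\sigma$ is a minimizer.

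For (5), suppose $L=\infty$ but $\sigma$ has finite length. Then $\sigma(t)$ converges to some point $q$ as $t\to\infty$, and $q$ lies in a compact set $K$ on which $w\geq c>0$. If $q\in\partial M\setminus\Sigma_1$, then $\sigma$ connects $\Sigma_1$ to $\partial M\setminus\Sigma_1$ with finite length, and the uniform local convergence argument then shows $L_i$ is bounded, a contradiction. I expect this step to be delicate. I would try first to use that $E_w(\sigma_i|_{[R,L_i]})\to m-E_w(\sigma|_{[0,R]})$, which tends to $0$ as $R\to\infty$; near $q$ the bound $w\geq c$ forces the tail lengths $L_i-R$ to be small, contradicting $L_i\to\infty$. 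If instead $q\notin\partial M\setminus\Sigma_1$, the same reasoning shows the tails of $\sigma_i$ have small energy but must still reach $\partial M\setminus\Sigma_1$, at positive distance from $q$, which is impossible. The technical point throughout is a lower bound $w\geq c$ on a neighborhood of $q$ that the curves must traverse.

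For (2), suppose $\sigma(t_0)\in\partial M$ for some interior $t_0$. If $\sigma(t_0)\in\partial M\setminus\Sigma_1$, then $\sigma|_{[0,t_0]}\in\mathcal C$. By (3) and $w>0$ it has energy strictly less than $m$ whenever $t_0<L$, which is impossible. If $\sigma(t_0)\in\Sigma_1$, then $\sigma|_{[t_0,\cdot]}$ is a strictly cheaper curve from $\Sigma_1$, contradicting the minimality in (3) (or (1)). Finally, each restriction $\sigma|_{[0,R]}$ minimizes with its interior away from $\partial M$, so $\sigma$ is a free boundary critical point. Then \eqref{critical} holds with $w$ in place of $u^\gamma$, and bootstrapping the ODE gives $C^{3,\alpha}$ regularity.
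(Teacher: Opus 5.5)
Your arguments for (1), (3) and (4) match the paper's: lower semicontinuity (the paper uses Fatou), the cut-and-paste of $\beta$, a short geodesic and $\sigma_i|_{[R,L_i]}$, and closedness of $\partial M\setminus\Sigma_1$.

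\textbf{The gap is in (2) and (5).} The limit $\sigma$ is only $1$-Lipschitz, and you never show that it has positive speed.

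In (2), ``strictly less than $m$'' needs $E_w(\sigma|_{[t_0,\sup I)})>0$, and ``strictly cheaper'' needs $E_w(\sigma|_{[0,t_0]})>0$. Suppose $\sigma\equiv\sigma(0)$ on $[0,t_0]$, so it sits on $\Sigma_1$, or $\sigma\equiv\sigma(t_0)\in\partial M\setminus\Sigma_1$ after $t_0$. Then neither (1) nor (3) yields a contradiction. These degenerate scenarios are exactly what the paper's Case 1 and Case 2 rule out, by returning to the arclength-parametrized $\sigma_i$. Case 1 uses $E_w(\sigma_i|_{[0,T]})\geq C_*T$, and Case 2 shortcuts $\sigma_i|_{[T,T^*]}$ through $p$.

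In (5), your sketch rests on the tail energies $m-E_w(\sigma|_{[0,R]})$ tending to $0$, i.e.\ on $E_w(\sigma)=m$. You have not shown this. For an escaping minimizing sequence only $E_w(\sigma)\leq m$ is available in general, since energy can be lost at infinity. Without small tail energy, the tails of $\sigma_i$ need not stay near $q$, so the bound $w\geq c$ near $q$ gives nothing.

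\textbf{The repair is already in your hands.} In (3) you noted that $E_w(\sigma_i|_{[0,R]})\to E_w(\sigma|_{[0,R]})$ for every $R<\sup I$. By uniform convergence and continuity of $w$, you also have
\[
E_w(\sigma_i|_{[0,R]})=\int_0^R w(\sigma_i(t))\,\mathrm{d}t\to\int_0^R w(\sigma(t))\,\mathrm{d}t .
\]
Hence
\[
\int_0^R w(\sigma)\,\bigl(1-|\sigma'|_g\bigr)\,\mathrm{d}t=0,
\]
and since $w>0$ and $|\sigma'|_g\leq 1$, you get $|\sigma'|_g=1$ a.e.\ on $[0,\sup I)$.

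With $\sigma$ parametrized by arclength, the discarded pieces in (2) have positive length, hence positive $E_w$, so both strict inequalities hold. Also (5) becomes immediate, because $\ell_g(\sigma|_{[0,R]})=R$.

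Alternatively, follow the paper's direct argument for (5). If $\sigma$ had finite length, it would lie in a compact set $K$. Then $\sigma_i([0,T])\subset\overline{B_1(K)}$ for large $i$, and $w\geq c>0$ there. Since $\sigma_i$ is parametrized by arclength, $E_w(\sigma_i)\geq cT$, so $m\geq cT$ for every $T$, which is a contradiction.
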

\begin{proof}
First, notice that by the assumptions, and since $\Sigma_1$ is closed, it immediately follows $\sigma(0)\in \Sigma_1$. 

Let us prove Item (1). We have that $\sigma:I\to M$ is $1$-Lipschitz with respect to $d_g$, because it is the limit of $1$-Lipschitz functions with respect to $d_g$. Then $\|\sigma'\|_g\leq 1$ almost everywhere. Let $T<\sup I$. For $i$ large enough, $T<L_i$. Thus, using Fatou's Lemma, we have that for every $\beta\in\mathcal{C}$,
\begin{equation}\label{eqn:Fatou'sLemma}
\begin{aligned}
E_w(\sigma|_{[0,T]})&=\int_0^T w(\sigma(t))\|\sigma'\|_g\mathrm{d}t \leq \int_0^T w(\sigma(t))\mathrm{d}t \leq \liminf_{i\to\infty} \int_0^T w(\sigma_i(t))\mathrm{d}t, \\
&\leq \liminf_{i\to\infty} \int_0^{L_i} w(\sigma_i(t))\mathrm{d}t \leq \liminf_{i\to\infty} E_w(\sigma_i) = m \leq E_w(\beta).
\end{aligned}
\end{equation}
Taking $T\to \sup I$ we get the sought conclusion.
\smallskip

Let us prove Item (2). Assume for the sake of contradiction that $p \in \sigma(\mathrm{int}(I))\cap \partial M$; therefore, there exists $0<T<\sup I$ such that $p=\sigma(T)\in\partial M$. There are two cases.

\textbf{Case 1}. $p\in\Sigma_1$. We aim at showing that this case is not possible.

For sufficiently large $i$, $T<L_i$. Let $\tilde{\sigma}_i$ be the concatenation of a $g$-minimizing geodesic from $p$ to $\sigma_{i}(T)$ (denoted as $\tau_i$), and $\sigma_{i}|_{[T,L_i]}$. We note that $\tilde{\sigma}_i\in \mathcal{C}$ and so $m\leq E_w (\tilde{\sigma}_i)$. We estimate
\begin{equation}\label{eqn:InequalityOnE}
m\leq E_w (\tilde{\sigma}_i)=E_w (\sigma_{i})-E_w (\sigma_{i}|_{[0,T]}) + E_w (\tau_i).
\end{equation}
We have that $\sigma_{i}|_{[0,T]} \subset \{p\in M:d_g(\Sigma_1,p)\leq T\}$. Indeed, for any $t\in [0,T]$
\[
d_{g}(\sigma_{i}(t),\Sigma_1)\leq d_g(\sigma_{i}(0), \Sigma_1)+d_g(\sigma_{i}(t), \sigma_{i}(0)) \leq t\leq T,
\]
where in the third inequality we use that $\sigma_{i}$ is 1-Lipschitz with respect to $d_g$. We have that $\tau_i \subset \{p\in M:d_g(\Sigma_1,p)\leq T+\mathrm{diam}(\Sigma_1)\}$. Indeed for any $i$ and any $t\in\mathrm{dom}(\tau_i)$ we have
\begin{align*}
    d_{g}(\tau_i(t),\Sigma_1)&\leq d_g(p, \Sigma_1)+d_g(\tau_i(t), p) \\
    &\leq d_g(p, \sigma_{i}(T))\\
    &\leq d_g(p, \sigma_{i}(0))+ d_g(\sigma_{i}(T), \sigma_{i}(0)) \\
    &\leq \mathrm{diam}_g(\Sigma_1)+T,
\end{align*}
where the second inequality follows since $\tau_i$ is a $g$-minimizing geodesic from $p$ to $\sigma_{i}(T)$ and the last inequality follows since $\sigma_{i}$ is 1-Lipschitz with respect to $d_g$.

Define the following
\begin{align*}
    &C^*:=\sup\{w(q):q\in \{p\in M:d_g(\Sigma_1,p)\leq T+\mathrm{diam}(\Sigma_1)\} \}\\
    &C_*:=\inf\{w(q):q\in \{p\in M:d_g(\Sigma_1,p)\leq T+\mathrm{diam}(\Sigma_1)\} \}.
\end{align*}
Since $\{p\in M:d_g(\Sigma_1,p)\leq T+\mathrm{diam}(\Sigma_1)\}$ is compact, we have that $C^*$ is finite, and $C_*$ is positive. Furthermore we have 
\begin{equation}\label{eqn:InequalityOnE2}
E_w(\sigma_{i}|_{[0,T]})\geq C_*\ell_g(\sigma_{i}|_{[0,T]}) = C_*T, \qquad E_w(\tau_i)\leq C^* d_g(\sigma(T),\sigma_{i}(T)).
\end{equation}
Therefore, by \eqref{eqn:InequalityOnE}, and \eqref{eqn:InequalityOnE2}, we have
\[
m\leq E_w(\sigma_{i}) - C_*T + C^*d_g(\sigma(T),\sigma_{i}(T)).
\]
We know that $\sigma_{i}$ converges to $\sigma$ uniformly on compact sets, and $E_w(\sigma_{i})\to m$. Taking $i\to\infty$ in the previous inequality leads to
\[
m\leq m-C_*T<m,
\]
a contradiction, as desired.

\textbf{Case 2}. $p\in \partial M\setminus \Sigma_1$. In this case, using Item (1), $E_w(\sigma|_{[0,T]})\leq E_w(\sigma)\leq E_w(\beta)$ for every $\beta\in\mathcal{C}$. Thus $\sigma|_{[0,T]}\in\mathcal{C}$ is a minimizer of $E_w$ in $\mathcal{C}$.

We claim that $I=[0,T]$. If not, $I\setminus [0,T]\neq \emptyset$. Since we have by Item (1) that $E_w(\sigma)\leq E_w(\sigma|_{[0,T]})$, we deduce $E_w(\sigma|_{I\setminus[0,T]})=0$. Hence we have that $\sigma|_{I\setminus [0,T]}\equiv p$. Fix $T^*>T$ with $[T,T^*]\subset I$. Let $\tau_{i,1}$ be a $g$-minimizing geodesic joining $\sigma_i(T)$ to $p$, and $\tau_{i,2}$ be a $g$-minimizing geodesic joining $p$ to $\sigma_i(T^*)$. Arguing as in \textbf{Case 1} above, there exists $\eta>0$ such that $E_w(\sigma_i|_{[T,T^*]})-E_w(\tau_{i,1})-E_w(\tau_{i,2})>\eta$ for $i$ large enough. Thus, for $i$ large enough, the concatenation of $\sigma_i|_{[0,T]},\tau_{i,1},\tau_{i,2},\sigma_i|_{[T^*,L_i]}$, which is in $\mathcal{C}$, will have energy $E_w$ strictly less than $m$, contradiction.

Hence, in this case, we have proved $I=[0,T]$, and thus $T=L$ and $L$ must be finite. Let us show that $\sigma((0,L))\subset M\setminus\partial M$. If by contradiction there is $0<T'<L$ such that $\sigma(T')\in\partial M$ we have two cases. If $\sigma(T')\in\Sigma_1$, we find a contradiction using \textbf{Case 1}. If $\sigma(T')\in \partial M\setminus\Sigma_1$, we find a contradiction as in the paragraph above. This concludes the proof of Item (2). 
\smallskip

Let us prove Item (3) by contradiction.
If Item (3) is not true, there is $R<\sup I$ and a compact curve $\tau$ connecting $\Sigma_1$ to $\sigma(R)$ such that $E_w(\tau)<E_{w}(\sigma|_{[0,R]})$. Again, for $i$ large enough, $R<L_i$. Let $\rho_i$ be a $g$-minimizing geodesic connecting $\sigma(R)$ to $\sigma_{i}(R)$. Let $\xi_i$ be the concatenation of $\tau$, $\rho_i$, and $\sigma_{i}|_{[R,L_i]}$. We note that $\xi_i\in \mathcal{C}$ and so, since $m=\inf_{\beta\in \mathcal{C}} E_w(\beta)$,
\[
E_w(\sigma_i)+o_i(1)=m \leq E_{w}(\xi_i) = E_{w}(\tau)+E_{w}(\rho_i)+E_{w}(\sigma_{i}|_{[R,L_i]}).
\]
Thus,
\[
E_{w}(\sigma_i) - E_{w}(\sigma_{i}|_{[R,L_i]}) = E_{w}(\sigma_{i}|_{[0,R]})\leq E_{w}(\tau)+E_{w}(\rho_i)+o_i(1).
\]
Let $C:=\sup\{w(q):d_g(q,\sigma(R))\leq 1\}$. Since $\sigma_{i}(R)\to \sigma(R)$, we have $\rho_i \subset \{q\in M: d_g(q,\sigma(R)) \leq 1\}$ (for $i$ large enough), which is a compact set.
Putting everything together, and using Fatou's lemma as in \eqref{eqn:Fatou'sLemma}, we have
\begin{align*}
     E_{w}(\sigma|_{[0,R]})&\leq \liminf_{i\to\infty}E_{w}(\sigma_{i} | _{[0,R]})\\
     &\leq E_{w}(\tau)+ \liminf_{i}E_{w}(\rho_i)\\
     & \leq E_{w}(\tau) + \liminf_{i} Cd_g(\sigma_{i}(R),\sigma(R))\\
     &=E_{w}(\tau)\\
     &< E_{w}(\sigma|_{[0,R]}),
\end{align*}
which is a contradiction.
\smallskip

Let us prove Item (4).  Since $\sigma_i(L_i)\in\partial M\setminus\Sigma_1$, and $\sigma_i(L_i)\to\sigma(L)$, we get that $\sigma(L)\in\partial M\setminus\Sigma_1$. Thus $\sigma\in\mathcal{C}$. Taking into account Item (1), we get that $\sigma\in\mathcal{C}$ is a minimizer of $E_w$ in $\mathcal{C}$.
\smallskip 

Let us finally prove Item (5). If by contradiction $\sigma$ has finite length, then there is a compact set $K$ such that $\sigma\subset K$. Let $\overline{B_1(K)}$ be the set of points with $g$-distance $\leq 1$ from $K$.  Let $C:=\inf_{\overline{B_1(K)}} w>0$. Fix $T>0$. For $i$ large enough, $\sigma_i|_{[0,T]}\subset \overline{B_1(K)}$ because $\sigma_i\to \sigma$ locally uniformly. Thus $E_w(\sigma_i)\geq E_w(\sigma_i|_{[0,T]})\geq CT$. Sending $i\to\infty$ we get $m\geq CT$. Since this is true for every $T>0$, we get $m=+\infty$, contradiction. Hence $\sigma$ has infinite length, as desired.
\end{proof}

\begin{lemma}\label{lem:existenceofminimizers}
Let $n\geq 2$. Let $(M^n,g)$ be a possibly noncompact manifold with disconnected boundary. Assume at least one boundary component is compact and denote it as $\Sigma_1$. Let $\nu$ be the outward unit normal to the boundary. 

Let 
\begin{equation}\label{eqn:AssumptionGammExistence}
\text{$0<\gamma\leq \frac{n-1}{n-2}$ if $n\geq 3$, or $0<\gamma<4$ if $n=2$}.
\end{equation}
Assume there is $u\in C^{2,\alpha}(M)$, $u>0$, such that 
        \begin{equation}\label{eqn:EitherOfTwo}
        -\gamma\Delta u+\mathrm{Ric}\cdot u \geq 0, \quad \text{on $M$}, \qquad H_{\partial M}+\gamma u^{-1}\langle \nabla u,\nu\rangle\geq 0, \quad \text{on $\partial M$},
        \end{equation}
where $H_{\partial M}$ is computed with respect to the outward unit normal $\nu$ to $\partial M$. When \(n\geq 3\), assume in addition that either
\(\gamma<\frac{n-1}{n-2}\), or at least one of the two inequalities in
\eqref{eqn:EitherOfTwo} is strict.

For a compact Lipschitz curve $\beta$ in $M$, let $E(\beta) := \int_\beta u^\gamma \mathrm{d}\ell_g$. Then there exists $\sigma\in \mathcal{C}$ (defined in \eqref{eqn:MatchalCDefinition}) such that $E(\sigma)\leq E(\sigma')$ for all $\sigma'\in \mathcal{C}$.
    
\end{lemma}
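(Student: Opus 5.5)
The plan is to take a minimizing sequence in $\mathcal{C}$, pass to a limit using \cref{lem:InteriorDoesNotTouchBoundaryAndOtherProperties} with $w=u^\gamma$, and exclude the case in which the limit escapes to infinity by means of \cref{lemfiniteenergyfintelength}.

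First, $m:=\inf_{\beta\in\mathcal{C}}E(\beta)<\infty$. Indeed, $M$ is connected and $\partial M\setminus\Sigma_1\neq\emptyset$, so some compact Lipschitz curve joins $\Sigma_1$ to $\partial M\setminus\Sigma_1$, and $u^\gamma$ is bounded on its compact image. Choose $\sigma_i\in\mathcal{C}$ with $E(\sigma_i)\to m$ and parameterize each by $g$-arclength on $[0,L_i]$. Since $\Sigma_1$ is compact and the $\sigma_i$ are $1$-Lipschitz, for every $T$ the images $\sigma_i([0,T])$ lie in the compact set $\{d_g(\cdot,\Sigma_1)\leq T\}$. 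By Arzel\`a--Ascoli and a diagonal argument we may therefore assume $L_i\to L\in(0,\infty]$ and $\sigma_i\to\sigma$ locally uniformly on $I$, as in the footnote of \cref{lem:InteriorDoesNotTouchBoundaryAndOtherProperties}. If $L<\infty$, Item (4) of that lemma gives $\sigma\in\mathcal{C}$ with $E(\sigma)=m$, and we are done.

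So suppose $L=\infty$. By \cref{lem:InteriorDoesNotTouchBoundaryAndOtherProperties}, $\sigma:[0,\infty)\to M$ has the following properties: $\sigma(0)\in\Sigma_1$; $\sigma((0,\infty))\subset M\setminus\partial M$ by Item (2); $\sigma$ has infinite $g$-length by Item (5); $E(\sigma)\leq m<\infty$ by Item (1); and for every $R>0$, $\sigma|_{[0,R]}$ minimizes $E$ among compact curves connecting $\Sigma_1$ to $\sigma(R)$ by Item (3). Now apply \cref{lemfiniteenergyfintelength} with $C=\Sigma_1$, a connected component of $\partial M$. Its hypotheses hold:
\begin{itemize}
\item the range of $\gamma$ in \eqref{eqn:AssumptionGammExistence} coincides with \eqref{eqn:Assumptiongammainfiniteenergy};
\item the inequalities \eqref{eqn:ToCheckOnlyOnSpt} follow from \eqref{eqn:EitherOfTwo} restricted to $\sigma$;
\item when $n\geq3$ and $\gamma=\frac{n-1}{n-2}$, strictness of one inequality in \eqref{eqn:EitherOfTwo} everywhere implies strictness along $\sigma((0,\infty))$, respectively at $\sigma(0)$.
\end{itemize}
Hence some $\sigma|_{[0,R]}$ is not $E$-minimizing among curves connecting $\Sigma_1$ to $\sigma(R)$, which contradicts Item (3). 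Therefore $L<\infty$, and the minimizer exists.

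The main obstacle is checking that Item (3) of \cref{lem:InteriorDoesNotTouchBoundaryAndOtherProperties} produces exactly the minimality required in \cref{lemfiniteenergyfintelength}. That lemma uses minimality among curves from $C$ to $\sigma(R)$ to obtain stability under variations that fix $\sigma(R)$ and let the initial point slide on $C$. This uses the regularity of $\sigma$ and the fact that $\sigma$ meets $\Sigma_1$ orthogonally, both of which come from the last assertion of \cref{lem:InteriorDoesNotTouchBoundaryAndOtherProperties}. Apart from this check, the argument is a direct combination of the two lemmas.
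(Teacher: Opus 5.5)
Your proposal is correct and matches the paper's proof: you take a minimizing sequence, extract a locally uniform limit via Arzel\`a--Ascoli, use \cref{lem:InteriorDoesNotTouchBoundaryAndOtherProperties} to get the properties of the limit, rule out $L=\infty$ by \cref{lemfiniteenergyfintelength}, and conclude with Item (4). You also spell out the finiteness of $m$ and check the hypotheses of \cref{lemfiniteenergyfintelength}; the paper leaves both of these implicit.
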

\begin{proof}
   Let $m=\inf_{\sigma\in \mathcal{C}} E(\sigma)$. Let $\{\sigma_i\}_{i=1}^\infty \subset \mathcal{C}$ be a sequence such that $E(\sigma_i) \to m$ as $i\to \infty$. Parameterize $\sigma_i$ by $g$-arclength; therefore, there are $L_i$ such that $\sigma_i:[0,L_i]\to M$ are $1$-Lipschitz with respect to the distance $d_g$. Let $L=\sup_i L_i>0$ and note $L$ is either finite or infinite. If $L<\infty$, define $I:=[0,L]$; otherwise $I:=[0,\infty)$. By Arzelà--Ascoli, up to subsequences we can assume $L=\lim_i L_i$, and there exists $\sigma:I\to M$ such that $\sigma_i\to\sigma$ locally uniformly. 

   We claim that $L$ cannot be infinite. If it were, by \cref{lem:InteriorDoesNotTouchBoundaryAndOtherProperties} we would have:
\begin{itemize}
    \item $\sigma(0)\in\Sigma_1$, and $\sigma((0,\infty))\cap \partial M = \emptyset$;
    \item $\sigma$ has infinite $g$-length, and $E(\sigma)<+\infty$;
    \item for every $R>0$, $\sigma|_{[0,R]}$ is $E$-minimizing among compact curves connecting $\Sigma_1$ to $\sigma(R)$.
\end{itemize}
The latter properties contradict \Cref{lemfiniteenergyfintelength}, and so $L$ cannot be infinite.

Hence $L$ is finite. Thus $\sigma\in\mathcal{C}$ is a minimizer of $E$ among curves in $\mathcal{C}$ by Item (4) of \cref{lem:InteriorDoesNotTouchBoundaryAndOtherProperties}, as desired.
\end{proof}
}

\begin{proposition}\label{propNearbyMinimizer}
     Let $n\geq 2$. Let $(M^n,g)$ be a possibly noncompact manifold with disconnected boundary. Assume at least one boundary component is compact and denote it as $\Sigma_1$. Let $\nu$ be the outward unit normal to the boundary. Let $0<\gamma<\frac{n-1}{n-2}$ if $n\geq 3$, or $0<\gamma<4$ if $n=2$. Assume there is $u\in C^{2,\alpha}(M)$, $u>0$, such that 
        \[
        -\gamma\Delta u+\mathrm{Ric}\cdot u \geq 0, \quad \text{on $M$}, \qquad H_{\partial M}+\gamma u^{-1}\langle \nabla u,\nu\rangle\geq 0, \quad \text{on $\partial M$},
        \]
        where $H_{\partial M}$ is computed with respect to the outward unit normal $\nu$ to $\partial M$. For a compact Lipschitz curve $\beta$ in $M$, let $E(\beta) := \int_\beta u^\gamma \mathrm{d}\ell_g$.

        Assume that  there exists a minimizer $\mathcal{C}\ni\sigma:[0,1]\to M$ of $E$ among competitors in the class $\mathcal{C}$ defined in \eqref{eqn:MatchalCDefinition}. Then, for any $s \in [0,1]$, there exists a positive constant $0<r_0 = r_0(s,\gamma)$ such that the following holds. For every $0 < r \leq r_0$ and every $p \in M \setminus \sigma$ with
\[
d_g(p,\sigma) = d_g(p,\sigma(s)) = 2r,
\]
there exists $\sigma_r\in \mathcal{C}$ such that:
\begin{enumerate}
    \item $\sigma_r$ is a minimizer of $E$ among competitors in the set $\mathcal{C}$;
    \item $\sigma_r\cap \overline{B_r(p)}\neq\emptyset$.
\end{enumerate}
\end{proposition}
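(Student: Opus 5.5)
This is the geodesic-capturing argument of \cite{HongWang}, adapted to free-boundary minimizers in the class $\mathcal{C}$. The idea is to lower the weight slightly near $p$ so that minimizers are attracted there, and then pass to the limit. Fix $s$ and let $r_0$ be the constant given by \cref{lemNewu} at $p_0=\sigma(s)$; shrink it if necessary. For $r\leq r_0$ and $p$ as in the statement we have $p\in\overline{B_{2r}(\sigma(s))}$, so \cref{lemNewu} provides functions $u_{r,t}$. Set $w_t:=u_{r,t}^\gamma$ and $E_t:=E_{w_t}$. Since $u_{r,t}<u$ on $B_{3r}(p)$, and $\sigma$ meets $B_{3r}(p)$ (because $d_g(p,\sigma)=2r$), we get
\[
\inf_{\mathcal C}E_t\leq E_t(\sigma)<E(\sigma)=\inf_{\mathcal C}E .
\]
Moreover $u_{r,t}$ still satisfies the boundary inequality (item (iv)). Away from $B_{3r}(p)$ it coincides with $u$, and on the annulus $B_{3r}(p)\setminus B_r(p)$ the interior inequality is strict (item (iii)).

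\textbf{Step 1: existence of $E_t$-minimizers.} Take a minimizing sequence for $E_t$ in $\mathcal C$ and apply \cref{lem:InteriorDoesNotTouchBoundaryAndOtherProperties} with $w=w_t$. If the limit curve had infinite length, then by (5) it would have infinite $g$-length. It would also have finite $E_t$, and each initial segment would be minimizing. Since $u_{r,t}=u$ outside a compact set, the curve leaves every compact set, so a tail of it satisfies the hypotheses \eqref{eqn:ToCheckOnlyOnSpt} for $u$ outside $B_{3r}(p)$. The proof of \cref{lemfiniteenergyfintelength} only uses the stability inequality together with the cut-off $\psi_i\equiv1$ on $[0,R_i]$. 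Inside $B_r(p)$ the interior inequality for $u_{r,t}$ may fail, but only on a bounded portion of the curve. I therefore expect to handle this by running the same argument for $u_{r,t}$ and absorbing the bounded bad contribution. If absorption does not close, the fallback is to use the comparison $u_{r,t}>(1-t)u$ to transfer the finiteness of length from $E$-minimizers. I expect this step to be the main technical obstacle. The output of Step 1 is an $E_t$-minimizer $\sigma_{t}\in\mathcal C$, which is $C^{3,\alpha}$ and meets $\partial M$ orthogonally.

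\textbf{Step 2: $\sigma_t$ meets $\overline{B_r(p)}$.} Suppose it does not. Then along $\sigma_t$ the function $u_{r,t}$ satisfies $-\gamma\Delta u_{r,t}+\mathrm{Ric}\,u_{r,t}\geq0$ (with strict inequality on the annulus), and the boundary inequality holds at the endpoints. \cref{lemgradienvanishes}, applied with $u_{r,t}$ and $0<\gamma<\frac{n-1}{n-2}$ (or $\gamma<4$ if $n=2$), forces
\[
-\gamma\Delta u_{r,t}+\mathrm{Ric}\,u_{r,t}=0 \quad\text{along }\sigma_t .
\]
Hence $\sigma_t$ avoids the annulus, and therefore $\sigma_t\cap B_{3r}(p)=\emptyset$. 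In that case $E(\sigma_t)=E_t(\sigma_t)<\inf_{\mathcal C}E$, which is a contradiction. So $\sigma_t\cap\overline{B_r(p)}\neq\emptyset$.

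\textbf{Step 3: limit $t\to0^+$.} We have $\inf E_t\to\inf E$, because $u_{r,t}\to u$ in $C^{2,\alpha}$ and $u_{r,t}>(1-t)u$. Hence $E(\sigma_t)\leq(1-t)^{-\gamma}E_t(\sigma_t)\to m$, so $(\sigma_t)$ is a minimizing sequence for $E$ in $\mathcal C$. By \cref{lem:InteriorDoesNotTouchBoundaryAndOtherProperties} together with \cref{lem:existenceofminimizers}, which rules out infinite length via \cref{lemfiniteenergyfintelength}, a subsequence converges locally uniformly to an $E$-minimizer $\sigma_r\in\mathcal C$. The key point is that the lengths of the $\sigma_t$ stay bounded. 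Given that, the points of $\sigma_t$ lying in $\overline{B_r(p)}$ occur at bounded parameters and accumulate on $\sigma_r$. Since $\overline{B_r(p)}$ is closed, this gives $\sigma_r\cap\overline{B_r(p)}\neq\emptyset$.
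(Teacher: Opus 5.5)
\textbf{The gap is Step 1.} Your Steps 2 and 3 are sound once genuine $E_t$-minimizers are available. Step 2 is in fact a slightly slicker version of the paper's $B_{3r-\varepsilon}(p)$ argument. But Step 1 asserts that, for each fixed $t$, $E_t$ has a minimizer in $\mathcal C$, and neither of your sketched routes proves this.

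\emph{Absorption does not close.} Test with $\psi_i\equiv1$ on $[0,R_i]$. The stretch of the curve inside $B_r(p)$, where $-\gamma\Delta u_{r,t}+\mathrm{Ric}\,u_{r,t}$ may be negative, contributes a fixed positive amount. You therefore only get an upper bound on $\int u_{r,t}^{\frac{n-3}{n-1}\gamma-2}(u_{r,t}')^2$, not $u_{r,t}'\equiv0$, and this does not contradict $E_t<\infty$.

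\emph{Applying \cref{lemfiniteenergyfintelength} to a tail is not allowed.} The lemma needs the curve to start on $\partial M$ with a free endpoint and the good boundary sign; that is exactly what permits $\psi(0)=1$. With a fixed interior starting point, finite-energy minimizing rays really occur. For example, in $\mathbb R^n$ take a positive superharmonic $u$ equal to $|x|^{2-n}$ for $|x|\geq1$, and $\frac1{n-2}<\gamma<\frac{n-1}{n-2}$. Then radial rays from $\{|x|=1\}$ are $E$-minimizing and have finite energy.

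\emph{The fallback does not work at fixed $t$.} It only gives $E\leq(1-t)^{-\gamma}E_t$, i.e. near-minimality up to a fixed factor, and that controls no lengths. There is no reason for existence at fixed $t$: an $E_t$-minimizing sequence may pass through $B_r(p)$ and then run off to infinity. This is precisely the alternative the paper has to handle.

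\textbf{The missing idea: you do not need $E_t$-minimizers.} Let $\sigma_t$ be the locally uniform limit of an $E_t$-minimizing sequence, given by \cref{lem:InteriorDoesNotTouchBoundaryAndOtherProperties}.
\begin{itemize}
\item If $\sigma_t$ is a finite curve, your Step 2 applies.
\item If $\sigma_t$ is an infinite ray and $\sigma_t\cap\overline{B_r(p)}=\emptyset$, then items (ii)--(iv) of \cref{lemNewu} give both inequalities for $u_{r,t}$ along the whole support of $\sigma_t$. That is all \cref{lemfiniteenergyfintelength} requires, so applying it with $u_{r,t}$ gives a contradiction.
\end{itemize}
Hence $\sigma_t$ meets $\overline{B_r(p)}$ in either case.

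Now take $t_i\to0$. Choose diagonally elements of the $E_{t_i}$-minimizing sequences: they lie in $\mathcal C$, satisfy $E\leq(1-t_i)^{-\gamma}E_{t_i}$, and have domain lengths close to those of $\sigma_{t_i}$ (at least $i$ if $\sigma_{t_i}$ is infinite). These form an $E$-minimizing sequence. Its limit is finite by \cref{lemfiniteenergyfintelength} applied to $u$. This forces the $\sigma_{t_i}$ to be finite with uniformly bounded lengths for large $i$, and your Step 3 then concludes.

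This is the paper's route. The paper additionally first minimizes the coercive weights $u_{r,t,h}$ and lets $h\to\infty$, but the essential point is the treatment of the infinite-ray case above.

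\textbf{Alternatively, make your fallback rigorous only for small $t$.} Suppose $E_{t_i}$ had no minimizer along some sequence $t_i\to0$. Then its minimizing sequences would have unbounded lengths, by item (4) of \cref{lem:InteriorDoesNotTouchBoundaryAndOtherProperties}. The same diagonal selection would give an $E$-minimizing sequence in $\mathcal C$ with unbounded lengths, whose limit is a finite-energy ray contradicting \cref{lemfiniteenergyfintelength}. So $E_t$-minimizers exist for all sufficiently small $t$, which is all your Steps 2 and 3 need.
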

\begin{proof}
    Fix $s\in [0,1]$ and let $p_0=\sigma(s)$. Let $r_0,t_0>0$ be the constants from \Cref{lemNewu}. Take $0<r<r_0$ and $p$ to be such that
\[
d_g(p,\sigma) = d_g(p,p_0) = 2r.
\]
For any $0<t<t_0$, let $u_{r,t}$ be the function constructed in \Cref{lemNewu}. For any $h>10r_0$, let $u_{r,t,h}$ be a $C^{2,\alpha}$ function that satisfies
\begin{align*}
    &u_{r,t,h} = u_{r,t} \text{ in }  \{q\in M : d_g(q,\sigma)\leq 2h\},\\
    &u_{r,t,h} \geq u_{r,t} \text{ in }  M\setminus\{q\in M : d_g(q,\sigma)\leq 2h\},\\
    & u_{r,t,h} \geq \max\{1,u_{r,t}\} \text{ in }  \{q\in M : d_g(q,\sigma)\geq 2h+1\}.
\end{align*}
Let $\mathcal{C}$ be defined as in \eqref{eqn:MatchalCDefinition}.
 For every $\beta\in\mathcal{C}$, $0<r<r_0$, $0<t<t_0$, and $h>10r_0$, let 
 $$
 E_{r,t,h}(\beta):=\int_\beta u^\gamma_{r,t,h}\mathrm{d}\ell_g,
 $$ and $m_{r,t,h}:=\inf_\mathcal{\beta\in C} E_{r,t,h}(\beta)<\infty$. Let $\{\sigma^i_{r,t,h}\}_{i=1}^\infty \subset \mathcal{C}$ be a sequence such that $E_{r,t,h}(\sigma^i_{r,t,h}) \to m_{r,t,h}$ as $i\to \infty$. Parameterize $\sigma^i_{r,t,h}$ by $g$-arclength; therefore, there are $L_i$ such that $\sigma^i_{r,t,h}:[0,L_i]\to M$ are $1$-Lipschitz with respect to $d_g$, with $\sigma_{r,t,h}^i(0)\in\Sigma_1$. Let $L=\sup_i L_i>0$ and note $L$ is either finite or infinite. If $L<\infty$, let $I:=[0,L]$; otherwise $I:=[0,\infty)$. Up to subsequence, using Arzelà--Ascoli, we can assume $L=\lim_i L_i$ and $\sigma_{r,t,h}^i\to \sigma_{r,t,h}$ locally uniformly, where $\sigma_{r,t,h}:I\to M$.

We claim that $L$ cannot be infinite. Indeed, by construction there is a positive constant $C>0$ such that $u_{r,t,h}\geq C>0$ on $M$. Thus, if $L$ is infinite, $E_{r,t,h}(\sigma_{r,t,h}^i)\geq C^\gamma L_i\to\infty$, which is a contradiction. Thus, $L$ is finite and by Item (4) of \cref{lem:InteriorDoesNotTouchBoundaryAndOtherProperties}$, {\sigma}_{r,t,h}\in\mathcal{C}$ is a minimizer of $E_{r,t,h}$ among competitors in $\mathcal{C}$. We claim the following.

\begin{claim}
   For any $0<r<r_0$, $0<t<t_0$,  there exists a constant $A$ such that  we have $\int_{\sigma_{r,t,h}\cap B_{3r}(p)}\, \mathrm{d}\ell_g\leq A$ for every $h>10r_0$.
\end{claim}
\begin{proof}[Proof of Claim]
    First, we note that
    \begin{equation}\label{eqn:InequalityErth}
    E_{r,t,h}(\sigma_{r,t,h})\leq E_{r,t,h}(\sigma) = E_{r,t}(\sigma) \leq E(\sigma),
    \end{equation}
    where the first inequality follows by the minimizing property of $\sigma_{r,t,h}$, the second equality follows because $u_{r,t,h}=u_{r,t}$ on $\sigma$, and the last inequality follows from item (ii) of \Cref{lemNewu}. Now note,
    \begin{align*}
         \inf_{B_{3r}(p)}\left(u_{r,t}^\gamma\right) \cdot \int_{\sigma_{r,t,h}\cap B_{3r}(p)}\, \mathrm{d}\ell_g &\leq   \int_{\sigma_{r,t,h}\cap B_{3r}(p)} u_{r,t}^\gamma\, \mathrm{d}\ell_g \\
         &\leq \int_{\sigma_{r,t,h}\cap B_{3r}(p)} u_{r,t,h}^\gamma\, \mathrm{d}\ell_g \\
         & \leq E_{r,t,h}(\sigma_{r,t,h})\\
         &\leq E(\sigma).
    \end{align*}
    Therefore, it is enough to take $A:=\frac{E(\sigma)}{ \inf_{B_{3r}(p)}\left(u_{r,t}^\gamma\right)}$.
\end{proof}

\begin{claim}
    For every $0<r<r_0$, $0<t<t_0$, there exists $\varepsilon>0$ small enough such that $\sigma_{r,t,h}\cap B_{3r-\eps}(p) \neq \emptyset$ for every $h\geq 10r_0$.
\end{claim}
\begin{proof}[Proof of Claim]
    By way of contradiction, suppose not. By the construction of $u_{r,t}$, see \cite[Lemma 3.3]{HongWang}, we have $u_{r,t}^\gamma\ge (1-C\eps)u^\gamma $ in $B_{3r}(p)\backslash B_{3r-\eps}(p)$ for some positive constant $C$, and for all $\varepsilon$ small enough.

If $\sigma_{r,t,h}\cap B_{3r-\eps}(p)=\emptyset$ for some $h\geq 10r_0$, then 
\begin{align*}
0<\;&\int_\sigma u^\gamma \, \mathrm{d}\ell_g-\int_\sigma u_{r,t}^\gamma \, \mathrm{d}\ell_g \\
=\;& \int_\sigma u^\gamma \, \mathrm{d}\ell_g-\int_{\sigma} u_{r,t,h}^\gamma \, \mathrm{d}\ell_g\\
\leq\;& \int_\sigma u^\gamma \, \mathrm{d}\ell_g-\int_{\sigma_{r,t,h}} u_{r,t,h}^\gamma\mathrm{d}\ell_g \\
\leq\;& \int_\sigma u^\gamma \, \mathrm{d}\ell_g
- \int_{\sigma_{r,t,h}\cap B_{3r}(p)} (1-C\eps)u^\gamma \, \mathrm{d}\ell_g- \int_{\sigma_{r,t,h}\setminus B_{3r}(p)} u^\gamma \, \mathrm{d}\ell_g\\
=\;& \int_\sigma u^\gamma\, \mathrm{d}\ell_g
-\int_{\sigma_{r,t,h}} u^\gamma \, \mathrm{d}\ell_g
\quad + C\eps \int_{\sigma_{r,t,h}\cap B_{3r}(p)} u^\gamma \, \mathrm{d}\ell_g\\
\leq\;& \int_\sigma u^\gamma \, \mathrm{d}\ell_g
-\int_\sigma u^\gamma \, \mathrm{d}\ell_g
+ C\eps \int_{\sigma_{r,t,h}\cap B_{3r}(p)} u^\gamma \, \mathrm{d}\ell_g\\
=\;& C\eps \int_{\sigma_{r,t,h}\cap B_{3r}(p)} u^\gamma \, \mathrm{d}\ell_g\\
\leq \; & AC\sup_{B_{3r}(p)} (u^\gamma) \cdot \eps.
\end{align*}
The first line follows from the fact $\sigma\cap B_{3r}(p)$ has positive $1$-measure, and item (ii) of \Cref{lemNewu}. The equality on the second line holds since $u_{r,t,h}=u_{r,t}$ on $\sigma$. The third line follows from the minimality of $\sigma_{r,t,h}$. The fourth line follows since $B_{3r}(p) \subset \{q\in M:d_g (q,\sigma)\leq 2h\}$, $u_{r,t,h}\geq u_{r,t}=u$ on $M\setminus B_{3r}(p)$; and, by the contradiction assumption, $\sigma_{r,t,h}\cap B_{3r-\eps}(p)=\emptyset$ so that $u^\gamma_{r,t}\geq (1-C\eps)u^\gamma$ on $\sigma_{r,t,h}\cap B_{3r}(p)$. Finally, the sixth line follows from minimality of $\sigma$, {and since $\sigma_{r,t,h}\in\mathcal{C}$}. Therefore, if we take $\eps$ small enough (independent on $h$) we arrive at a contradiction.
\end{proof}

{Let us take $h_i\to\infty$. Let us parameterize $\sigma_{r,t,h_i}:[0,L_{h_i}]\to M$ by $g$-arclength. Up to relabeled subsequences, and using Arzelà--Ascoli, $L=\lim_i L_{h_i}>0$ exists, and $\sigma_{r,t,h_i}$ converges locally uniformly to a $1$-Lipschitz (with respect to $d_g$) curve $\sigma_{r,t}:J\to M$. Here $J:=[0,L]$ when $L<\infty$, and $J:=[0,\infty)$ otherwise.

\begin{claim}\label{claim:minimizingErt}
  Let $m:=\inf_{\beta\in\mathcal{C}}E_{r,t}(\beta)$. Then we have $\lim_i E_{r,t}(\sigma_{r,t,{h_i}}) = m$.
\end{claim} 
\begin{proof}[Proof of Claim]
   For any $\eps>0$, there exist $\beta_\eps\in\mathcal{C}$ such that $m\leq E_{r,t}(\beta_\eps)\leq m+\eps$. Now, choose $h_i$ (depending on $\eps$) large enough that $\beta_\eps\subset \{q\in M:d_g(q,\sigma)\leq 2h_i\}$. Recall that $\sigma_{r,t,h_i}\in \mathcal{C},\beta_\eps\in \mathcal{C}$, and $u_{r,t}\leq u_{r,t,h_i}$ on $M$. Therefore,
   \[
   m\leq E_{r,t}(\sigma_{r,t,h_i})\leq E_{r,t,h_i}(\sigma_{r,t,h_i})\leq E_{r,t,h_i}(\beta_\eps)=E_{r,t}(\beta_\eps) \leq m+\eps,
   \]
   and this proves the claim, as desired.
\end{proof}

Therefore, $\sigma_{r,t,h_i}$ and $\sigma_{r,t}$ satisfy the hypotheses of \Cref{lem:InteriorDoesNotTouchBoundaryAndOtherProperties}}. Then, by \Cref{lem:InteriorDoesNotTouchBoundaryAndOtherProperties}, we have  that 
\begin{itemize}
    \item $\sigma_{r,t}$ cannot intersect $\partial M$ at interior points of its support;
    \item If $J$ is compact, $\sigma_{r,t}$ must connect $\Sigma_1$ to $\partial M\setminus\Sigma_1$ and $\sigma_{r,t}\in \mathcal{C}$ is a minimizer of $E_{r,t}$ among competitors in $\mathcal{C}$.
\end{itemize}  

Let us now prove the following claim.
\begin{claim}
    $\sigma_{r,t} \cap \overline{B_r(p)} \neq \emptyset$.
\end{claim}
\begin{proof}[Proof of Claim]
Suppose for the sake of contradiction $\sigma_{r,t}\cap\overline{B_r(p)}=\emptyset$. Because of the contradiction assumption and item (ii) and (iii) of \cref{lemNewu}, we have that $-\gamma \Delta u_{r,t} + \Ric \cdot u_{r,t} \geq 0$ on the support of $\sigma_{r,t}$. Moreover, by item (iv) of \cref{lemNewu} we have $H+\gamma u_{r,t}^{-1}\langle \nabla u_{r,t},\nu\rangle\geq 0$ on $\partial M$. We must deal with two cases: 

\textbf{Case 1}. We have $\sigma_{r,t}:[0,L]\to M$, with $L<\infty$.

In this case, as noted above, we have that $\sigma_{r,t}\in\mathcal{C}$ minimizes $E_{r,t}$ over $\mathcal{C}$, and $\sigma_{r,t}((0,L))\subset M\setminus\partial M$. Moreover, recall that $\sigma_{r,t,h_i}:[0,L_{h_i}]\to M$, $L=\lim_i L_{h_i}$ is finite, and $\sigma_{r,t,h_i}\to \sigma_{r,t}$ uniformly. Thus, taking into account that $\sigma_{r,t,h_i}\cap B_{3r-\varepsilon}(p)\neq\emptyset$ for all $i$ large enough, and for all $\varepsilon$ small enough, we also deduce $\sigma_{r,t}\cap B_{3r-\varepsilon}(p)\neq \emptyset$ for all $\varepsilon>0$ small enough. 

Now, using \cref{lemgradienvanishes}, we conclude that $-\gamma \Delta u_{r,t} + \Ric \cdot u_{r,t} = 0$ along $\sigma_{r,t}$, a contradiction with item (iii) of \cref{lemNewu}, taking into account that $\sigma_{r,t}\cap B_{3r-\varepsilon}(p)\neq \emptyset$, for all $\varepsilon>0$ small enough.

\textbf{Case 2}. We have $\sigma_{r,t}:[0,\infty)\to M$.

By  \Cref{lem:InteriorDoesNotTouchBoundaryAndOtherProperties}, we also have that, for every $R>0$, $\sigma_{r,t}|_{[0,R]}$ is $E_{r,t}$-minimizing among compact curves connecting $\Sigma_1$ to $\sigma_{r,t}(R)$. Moreover, $E_{r,t}(\sigma_{r,t})<\infty$, and $\sigma_{r,t}$ has infinite length. Furthermore, as noted above, $\sigma_{r,t}$ cannot intersect $\partial M$ at interior points of its support. The latter properties are in contradiction with \cref{lemfiniteenergyfintelength}, as desired.
\end{proof}

{
Let $t_i<t_0$ with $t_i\to 0$. We have constructed $\sigma_{r,t_i}:J_i\to M$, where $J_i$ is an interval with $0=\min J_i$, and $\sigma_{r,t_i}(0)\in\Sigma_1$. Up to subsequences we can assume $L_\infty:=\lim_i(\sup J_i)>0$ exists. We set $I_\infty:=[0,L_\infty]$ if $L_\infty<\infty$, and $I_\infty:=[0,\infty)$ otherwise.

By Arzelà--Ascoli, 
there is a (relabeled) subsequence $\sigma_{r,t_i}$ converging uniformly on compact subsets to a Lipschitz curve $\sigma_r:I_\infty\to M$. By construction, for every $t_i$ there is a sequence $h_{i,j}\to_j \infty$ such that $\sigma_{r,t_i,h_{i,j}}\to_j \sigma_{r,t_i}$ locally uniformly. We can extract a diagonal subsequence $\sigma_{r,t_i,h_{j_i}}:[0,L_{h_{j_i}}]\to M$ with $L_\infty=\lim_i L_{h_{j_i}}$ such that $\sigma_{r,t_i,h_{j_i}}\to\sigma_r$ locally uniformly. We claim the following
{{
\begin{claim}
    $\lim_i E(\sigma_{r,t_i,h_{j_i}}) = E(\sigma)=\min_{\beta\in\mathcal{C}} E(\beta)$.
\end{claim}
\begin{proof}[Proof of Claim]
    By \Cref{lemNewu} (ii) and (v), we have that $u_{r,t_i}\leq u\leq \frac{1}{1-t_i}u_{r,t_i}$. By the minimizing property of $\sigma$, the fact that $u_{r,t_i,h_{j_i}}\geq u_{r,t_i}$ on $M$, the minimizing property of $\sigma_{r,t_i,h_{j_i}}$, and the fact that $u_{r,t_i,h_{j_i}}=u_{r,t_i}$ on $\sigma$,  we have
    \[
    E(\sigma)\le E(\sigma_{r,t_i,h_{j_i}})\leq \frac{1}{(1-t_i)^\gamma} E_{r,t_i,h_{j_i}}(\sigma_{r,t_i,h_{j_i}})\leq \frac{1}{(1-t_i)^\gamma}E_{r,t_i,h_{j_i}}(\sigma) = \frac{1}{(1-t_i)^\gamma}E_{r,t_i}(\sigma).
    \]
    Now, by taking the limit as $t_i\to 0$, we have 
    \[
    E(\sigma)\leq \lim_{i\to\infty} E(\sigma_{r,t_i,h_{j_i}}) \leq \lim_{i\to\infty}\frac{1}{(1-t_i)^\gamma}E_{r,t_i}(\sigma)= E(\sigma),
    \]
as desired.
\end{proof}

Now, we aim at showing that $L_\infty<\infty$. By contradiction, $L_\infty=\infty$. Thus, by taking into account the previous claim, we can apply \Cref{lem:InteriorDoesNotTouchBoundaryAndOtherProperties} and conclude that:
\begin{itemize}
    \item $\sigma_r(0)\in \Sigma_1$ and $\sigma_r((0,\infty))\subset M\setminus\partial M$;
    \item For every $\beta\in\mathcal{C}$, $E(\sigma_r)\leq E(\beta)$. Thus, $E(\sigma_r)<\infty$;
    \item $\sigma_{r}|_{[0,R]}$ is $E$-minimizing among compact curves connecting $\Sigma_1$ to $\sigma_r(R)$;
    \item $\sigma_r$ has infinite $g$-length.
\end{itemize}
Hence, from the latter findings, we get a contradiction with \cref{lemfiniteenergyfintelength}. Thus $L_\infty<\infty$ as desired.}}

Since $L_\infty=\lim_i(\sup J_i)<\infty$, we have $\sigma_{r,t_i}:[0,\ell_i]\to M$, where $\ell_i$ are uniformly bounded. Moreover, $\sigma_{r,t_i}\to \sigma_{r}$ uniformly.
Hence, since by our previous claims we have $\sigma_{r,t}\cap\overline{B_r(p)}\neq\emptyset$ for all $t$ small enough, we conclude that $\sigma_r\cap \overline{B_r(p)}\neq \emptyset$. Finally, by applying Item (4) of \Cref{lem:InteriorDoesNotTouchBoundaryAndOtherProperties} (to the sequence $\{\sigma_{r,t_i,h_{j_i}}\}_{i=1}^\infty$), we have that $\sigma_r\in\mathcal{C}$ is a minimizer of $E$ among competitors in $\mathcal{C}$, as desired.
}
\end{proof}

\begin{proof}[Proof of \Cref{ThmRigid}]
Define $\Sigma_1$ to be a compact connected component of the boundary. Let $\mathcal{C}$ be defined as in \eqref{eqn:MatchalCDefinition}.
    Consider
\[
\Omega := \left\{ p \in M \;\middle|\;
\begin{aligned}
&\text{there exists $\sigma\in\mathcal{C}$}\,\text{ such that $p\in\sigma$} \\
&\hspace{0pt}\text{and $\sigma$ minimizes $E$ among competitors in $\mathcal{C}$}
\end{aligned}
\right\}.
\]
By \Cref{lem:existenceofminimizers}, $\Omega$ is nonempty.
We claim that $\Omega$ is closed. 

To see this consider a sequence $\{q_i\}_{i=1}^\infty\subset \Omega$ converging to $q$. Let $\mathcal{C}\ni\sigma_{i}:[0,L_i]\to M$ be an $E$-minimizer among competitors in $\mathcal{C}$, parameterized by $g$-arclength, such that $q_i\in\sigma_i$. We will show that there is $\sigma\in\mathcal{C}$ that is an $E$-minimizer among competitors in $\mathcal{C}$, such that $q\in\sigma$.

{
Up to subsequences, we can assume $L=\lim_i L_i>0$ exists. Denote $I:=[0,L]$ if $L<\infty$, and $I:=[0,\infty)$ otherwise. Up to relabeled subsequences, $\sigma_i$ converges locally uniformly to $\sigma_\infty:I\to M$. We claim that $L<\infty$. Indeed, if not, by \cref{lem:InteriorDoesNotTouchBoundaryAndOtherProperties}, we would have that:
\begin{itemize}
    \item for every $R>0$, $\sigma_\infty|_{[0,R]}$ is an $E$-minimizer among compact curves connecting $\Sigma_1$ to $\sigma_\infty(R)$, and $E(\sigma_\infty)<\infty$;
    \item $\sigma_\infty(\mathrm{int}(I))\cap\partial M = \emptyset$;
    \item $\sigma_\infty$ must have infinite $g$-length.
\end{itemize} 
The latter properties would be in contradiction, due to \cref{lemfiniteenergyfintelength}. Hence $I=[0,L]$ with $L<\infty$. 

Since $L<\infty$, we have that $L_i$ is uniformly bounded. Moreover $\sigma_i:[0,L_i]\to M$ converges uniformly to $\sigma:[0,L]\to M$, $q_i\in \sigma_i$, and $q_i\to q$. The latter conditions readily imply $q\in\sigma_\infty$. Moreover, by Item (4) of \cref{lem:InteriorDoesNotTouchBoundaryAndOtherProperties}, $\sigma_\infty\in\mathcal{C}$ is an $E$-minimizer among curves in $\mathcal{C}$. Since $q\in\sigma_\infty$, we have that $\Omega$ is closed, as desired.
 }

We now show that $\Omega=M$ by contradiction. Assume $M\setminus\Omega\neq \emptyset$. Let $p_1\in M\setminus \Omega$. Since $\Omega$ is closed there exists a nearest point $p_0\in \Omega$ to $p_1$. Let $\sigma$ be an $E$-minimizer connecting $\Sigma_1$ to $\partial M \setminus \Sigma_1$ through $p_0$, and $\alpha$ be a $g$-minimizing geodesic segment connecting $p_0$ to $p_1$, with $\alpha(0)=p_0$. As $p_0\in \Omega$ is the nearest point to $p_1$, we have, for sufficiently small $r>0$, that $B_{2r}(\alpha(2r))\cap\Omega=\emptyset$. Therefore,
\[
d_g(\alpha(2r),\sigma)=d_g(\alpha(2r), p_0)=2r.
\]

Now, apply \Cref{propNearbyMinimizer} with $r<r_0$ and conclude $\overline{B_r(\alpha(2r))}\cap \Omega\neq \emptyset$, which is a contradiction. Thus, $\Omega =M$.

In conclusion, for any $p\in M$, we can apply \Cref{lemgradienvanishes} on an $E$-minimizer connecting $\Sigma_1$ to $\partial M\setminus \Sigma_1$ through $p$. We thus conclude $|\nabla u|(p)=0$ for every $p\in M$. Therefore, $u$ is constant, and so $(M,g)$ has $\mathrm{Ric}\geq 0$ and $H\geq 0$. The rigidity follows from Kasue's classical result \cite[Theorem B(1)]{Kasue83}.
\end{proof}

\section{Proof of \cref{thm:Pi10} and \cref{thm:RigidityOrPi1Zero}}\label{sec:proofs}

In this section we complete the proof of the main theorems. The following lemma is essentially contained in \cite{Galloway}.
\begin{lemma}\label{lem:Galloway}
    Let $n\geq 2$. Let $(M^n,g)$ be a complete (possibly noncompact) connected Riemannian manifold with compact connected boundary $\partial M$. Let $\iota:\partial M\to M$ be the inclusion map. Then:
    \begin{enumerate}
        \item Either $\iota_*:\pi_1(\partial M)\to \pi_1(M)$ is onto;
        \item Or there is a Riemannian covering map $\pi:(\tilde M,\tilde g)\to (M,g)$ such that $\partial\tilde M$ has at least two connected components, and at least one of them is mapped isometrically by $\pi$ onto $\partial M$ (and thus it is compact).
    \end{enumerate}
\end{lemma}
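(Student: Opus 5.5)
We will use covering space theory and take the cover of $M$ associated to the image of the boundary's fundamental group. Fix a base point $x_0\in\partial M$ and set $H:=\iota_*(\pi_1(\partial M,x_0))\subseteq\pi_1(M,x_0)$. If $H=\pi_1(M,x_0)$ we are in alternative (1), so assume $H$ is a proper subgroup. Since $M$ is a connected manifold with boundary, it is locally path-connected and semi-locally simply connected; standard covering space theory therefore gives a connected covering $\pi:\tilde M\to M$ and a point $\tilde x_0\in\pi^{-1}(x_0)$ with $\pi_*(\pi_1(\tilde M,\tilde x_0))=H$. We equip $\tilde M$ with $\tilde g:=\pi^*g$. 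Then $\pi$ is a local isometry, and $(\tilde M,\tilde g)$ is complete because completeness lifts along Riemannian coverings. Moreover $\partial\tilde M=\pi^{-1}(\partial M)$, and $\pi$ restricted to $\partial\tilde M$ is a covering of $\partial M$.

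\emph{The lifted boundary component.} Let $\tilde\Sigma$ be the connected component of $\partial\tilde M$ containing $\tilde x_0$. By the lifting criterion, the inclusion $\partial M\hookrightarrow M$ lifts to a map $s:\partial M\to\tilde M$ with $s(x_0)=\tilde x_0$, since $\iota_*\pi_1(\partial M)=H=\pi_*\pi_1(\tilde M)$. Its image is connected, contains $\tilde x_0$, and lies in $\pi^{-1}(\partial M)=\partial\tilde M$, so $s(\partial M)\subseteq\tilde\Sigma$.

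\emph{$\pi|_{\tilde\Sigma}$ is a diffeomorphism onto $\partial M$.} The restriction $\pi|_{\tilde\Sigma}:\tilde\Sigma\to\partial M$ is a connected covering with a global section $s$, because $\pi\circ s=\mathrm{id}$. Its image $s(\partial M)$ is compact, hence closed in $\tilde\Sigma$. It is also open: $s$ is a lift of a local homeomorphism along the covering, so locally $s$ coincides with the inverse of $\pi$ on a sheet. By connectedness $s(\partial M)=\tilde\Sigma$, and therefore $\pi|_{\tilde\Sigma}$ is a bijective local isometry, i.e. an isometry onto $\partial M$. In particular $\tilde\Sigma$ is compact.

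\emph{A second boundary component exists.} The covering $\pi$ has degree $[\pi_1(M):H]\geq 2$, so there is a point $\tilde y\in\pi^{-1}(x_0)$ with $\tilde y\neq\tilde x_0$. The only point of $\tilde\Sigma$ over $x_0$ is $\tilde x_0$, since $\pi|_{\tilde\Sigma}$ is injective. Hence $\tilde y\in\partial\tilde M\setminus\tilde\Sigma$, and $\partial\tilde M$ has at least two components, which is alternative (2).

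The only delicate step is the argument that $\pi|_{\tilde\Sigma}$ is injective, that is, that $\tilde\Sigma$ is a single sheet. The section-based open-and-closed argument above handles it. The same conclusion also follows from the standard fact that the components of $\pi^{-1}(\partial M)$ correspond to the double cosets $H\backslash\pi_1(M)/H$, and the component through $\tilde x_0$ covers $\partial M$ with degree $[H:H\cap H]=1$. Compactness of $\partial M$ is used only to conclude that $\tilde\Sigma$ is compact.
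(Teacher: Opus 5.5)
Your proof is correct and follows the same route as the paper. The paper only cites the opening lines of Galloway's proof, which take the cover associated to $\iota_*\pi_1(\partial M)$, lift $\partial M$ to a single sheet mapped isometrically onto $\partial M$, and use nontriviality of the cover to place a second preimage of the base point on another boundary component; you have written out these covering-space details. One small remark: closedness of $s(\partial M)$ in $\tilde\Sigma$ does not need compactness, because the image of a section is the set where $s\circ\pi$ agrees with the identity, and that set is closed in the Hausdorff space $\tilde\Sigma$.
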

\begin{proof}
    The proof is in the first lines of \cite[Proof of Lemma 2]{Galloway}.
\end{proof}
Let us start with the proof of \cref{thm:Pi10}.

\begin{proof}[Proof of \cref{thm:Pi10}]
    If $\gamma=0$, the result follows from \cite[Proposition 2.8]{FraserLi}, see also \cite{LawsonUnknot}. So, we can assume $\gamma>0$. In order to prove $\pi_1(M,\partial M)=0$, we have to show that $\partial M$ is connected, and $\iota_*:\pi_1(\partial M)\to \pi_1(M)$ is surjective. 
    
    Let us first show that $\partial M$ is connected. If this is not the case, let $\Sigma_1$ be a connected compact component of $\partial M$, which exists by assumption, and denote
\[
\mathcal{C}:=\Big\{\beta\in \mathrm{Lip}_g([a,b];M):\beta(a)\in \Sigma_1,\, \beta(b) \in \partial M \setminus \Sigma_1\Big\}.
\]
Let $E(\tau):=\int_\tau u^\gamma\mathrm{d}\ell_g$, for every $\tau\in \mathcal{C}$. By \cref{lem:existenceofminimizers}, 
there exists $\sigma\in\mathcal{C}$ such that $E(\sigma)\leq E(\tau)$ for every $\tau\in \mathcal{C}$. Notice that by the minimality property, $\sigma$ intersects $\partial M$ only at its endpoints. Thus, by plugging $\psi\equiv 1$ in the stability inequality for $\sigma$, given by \cref{lem:SecondVariationGeneral}, we get a contradiction because at least one among $-\gamma\Delta u+\mathrm{Ric}u\geq 0$ and $H_{\partial M}+\gamma u^{-1}\langle\nabla u,\nu\rangle\geq 0$ is a strict inequality by assumption. Thus, $\partial M$ is connected, as desired. Moreover, by assumption $\partial M$ is compact.

    Let us now show that $\iota_*:\pi_1(\partial M)\to \pi_1(M)$ is surjective. If not, by \cref{lem:Galloway}, there is a Riemannian covering map $\pi:(\tilde M,\tilde g)\to (M,g)$ such that $\partial\tilde M$ has at least two boundary connected components, of which at least one is compact. Call $\tilde u:=u\circ \pi$. We have $-\gamma\tilde{\Delta}\tilde u+\tilde u\tilde{\mathrm{Ric}}\geq 0$ on $\tilde{M}\setminus\partial \tilde{M}$; and $\tilde{H}_{\partial\tilde M}+\gamma\tilde u^{-1}\langle\tilde\nabla\tilde u,\tilde\nu\rangle\geq 0$ on $\partial\tilde M$, where $\tilde{\square}$ denotes the object $\square$ in $(\tilde M,\tilde g)$. Moreover, at least one of the latter two inequalities is strict by assumption.  
    
    Let $\tilde\Sigma_1$ be a compact connected component of $\partial\tilde{M}$. Let 
    \[
\tilde{\mathcal{C}}:=\Big\{\beta\in \mathrm{Lip}_{\tilde g}([a,b];\tilde M):\beta(a)\in \tilde\Sigma_1,\, \beta(b) \in \partial \tilde M \setminus \tilde\Sigma_1\Big\},
\]
and $\tilde E(\tau):=\int_\tau \tilde u^\gamma \mathrm{d}\ell_{\tilde g}$, for every $\tilde g$-Lipschitz curve $\tau$ in $\tilde M$. By \cref{lem:existenceofminimizers}, 
there exists $\tilde\sigma\in\tilde{\mathcal{C}}$ such that $\tilde E(\tilde\sigma)\leq \tilde E(\tau)$ for every $\tau\in \tilde{\mathcal{C}}$. Thus, we get a contradiction as above using \cref{lem:SecondVariationGeneral}.
Thus, we get that $\iota_*:\pi_1(\partial M)\to \pi_1(M)$ is surjective, as desired.
\end{proof}

We finally prove \cref{thm:RigidityOrPi1Zero}.
\begin{proof}[Proof of \cref{thm:RigidityOrPi1Zero}]
    Let us first deal with the case $n\geq 3$. If at least one of the inequalities in \eqref{eqn:TwoAssumptionsAgain} is a strict inequality, the assertion follows from \cref{thm:Pi10}.

    We are left to consider the case when $\gamma<\frac{n-1}{n-2}$, and both the inequalities in \eqref{eqn:TwoAssumptionsAgain} hold. In this case we want to show that either $\pi_1(M,\partial M)=0$, or there exists a compact Riemannian manifold $(\Sigma^{n-1},g_\Sigma)$, with $\mathrm{Ric}_{g_\Sigma}\geq 0$, and a Riemannian cover $\pi:([0,L]\times\Sigma,\mathrm{d}t^2+g_{\Sigma})\to (M^n,g)$ with $\mathrm{deg}(\pi)\leq 2$. 

    If $\pi_1(M,\partial M)\neq 0$ then we are left to consider two cases.

    \textbf{Case 1.} $\partial M$ is not connected. In this case \cref{thmSplitting} implies that there exists a compact Riemannian manifold $(\Sigma^{n-1},g_\Sigma)$, with $\mathrm{Ric}_{g_\Sigma}\geq 0$, such that $(M,g)\cong ([0,L]\times\Sigma^{n-1},\mathrm{d}t^2+g_\Sigma)$, as desired. In this case our assertion holds, and the Riemannian cover is actually a Riemannian isometry, so that $\deg(\pi)=1$.

    \textbf{Case 2.} $\partial M$ is connected, and $\iota_*:\pi_1(\partial M)\to\pi_1(M)$ is not surjective. By \cref{lem:Galloway} there is a Riemannian covering map $\pi:(\tilde M,\tilde g)\to (M,g)$ such that $\partial\tilde M$ has at least two boundary connected components, of which at least one is compact. By applying \cref{thmSplitting} with $\tilde u:=u\circ\pi$, we get that there exists a compact Riemannian manifold $(\Sigma^{n-1},g_\Sigma)$, with $\mathrm{Ric}_{g_\Sigma}\geq 0$, and such that $(\tilde M,\tilde g)\cong ([0,L]\times\Sigma^{n-1},\mathrm{d}t^2+g_{\Sigma})$. Moreover, in the covering construction of \cref{lem:Galloway} one boundary component maps
isometrically onto $\partial M$. Therefore the covering has degree at most $2$.

    The case $n=2$ follows arguing as in the two cases above, using \cref{thmSplitting}.
\end{proof}

\section{Appendix: Inradius and Bonnet–-Myers Type Estimates}\label{sec:app}
\begin{proposition}\label{prop:WeightedInradius}
Let \(n\geq 3\), let \(0\leq \gamma\leq \frac{n-1}{n-2}\), $k>0$, and let
\((M^n,g)\) be a smooth complete Riemannian manifold with nonempty boundary.
Let \(0<u\in C^{2,\alpha}(M)\), and assume that
\[
0<\inf_M u\leq \sup_M u<\infty .
\]
Assume that
\[
-\gamma\Delta u+\mathrm{Ric}\cdot u\geq 0
\qquad\text{on }M,
\]
and assume that, along \(\partial M\),
\[
H_{\partial M}+\gamma u^{-1}u_\nu\geq (n-1)k>0,
\]
where \(\nu\) is the outward unit normal to \(\partial M\),
\(u_\nu=\langle \nabla u,\nu\rangle\), and \(H_{\partial M}\) is computed
with respect to \(\nu\). Then
\[
\operatorname{InRad}(M):=\sup_{x\in M}d_g(x,\partial M)
\leq
\frac1k
\left(
\frac{\sup_M u}{\inf_M u}
\right)^{\frac{n-3}{n-1}\gamma}.
\]
Thus, if \(\partial M\) is compact, then \(M\) is compact.
\end{proposition}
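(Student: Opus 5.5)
Our plan is a one-boundary-endpoint stability argument with a linear cut-off, in the spirit of \cref{lemfiniteenergyfintelength}, but applied to an ordinary $g$-geodesic rather than a weighted one. Fix $x\in M$ and set $D:=d_g(x,\partial M)$; we may assume $D>0$. By completeness, take a $g$-minimizing geodesic $\sigma:[0,D]\to M$ parameterized by arclength, with $\sigma(0)\in\partial M$ and $\sigma(D)=x$.

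The difficulty is that $\sigma$ minimizes $g$-length, not $E$, so \cref{lem:SecondVariationGeneral} does not apply directly. We therefore work with $E(\beta)=\int_\beta u^\gamma\,\mathrm{d}\ell_g$. Define $\rho(x):=\inf\{E(\beta): \beta \text{ joins } \partial M \text{ to } x\}$. Since $(\inf u)^\gamma d_g\leq \rho\leq (\sup u)^\gamma d_g$ and $\rho$ is proper on bounded sets, minimizers exist: minimizing sequences have bounded $g$-length $\leq \rho/(\inf u)^\gamma+1$, so Arzelà--Ascoli applies. As in \cref{lem:InteriorDoesNotTouchBoundaryAndOtherProperties}, a minimizer $\sigma:[0,\ell]\to M$ meets $\partial M$ only at $\sigma(0)$, and orthogonally. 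Its length satisfies
\[
\ell\geq \rho(x)/(\sup u)^\gamma\geq D(\inf u/\sup u)^\gamma .
\]
We expect to track the sharper power $\frac{n-3}{n-1}\gamma$ more carefully below.

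The stability inequality of \cref{lem:SecondVariationGeneral} holds for $\psi\in C^\infty([0,\ell])$ with $\psi(\ell)=0$, since the far endpoint is fixed; the proof goes through verbatim with one boundary term. Drop the nonpositive terms as follows. The coefficient of $(u')^2$ is $\frac{n-2}{n-1}\gamma^2-\gamma\leq 0$ because $\gamma\leq\frac{n-1}{n-2}$. The term $-\gamma u^{-2}|\nabla^\perp u|^2$ is nonpositive. Finally $\gamma u^{-1}\Delta u-\mathrm{Ric}(\sigma',\sigma')\leq \gamma u^{-1}\Delta u-\mathrm{Ric}\leq 0$. This leaves
\[
u(\sigma(0))^{\frac{n-3}{n-1}\gamma}\,(n-1)k\,\psi(0)^2\leq (n-1)\int_0^\ell u^{\frac{n-3}{n-1}\gamma}(\psi')^2\,\mathrm{d}t .
\]
Choose $\psi(t)=1-t/\ell$ and bound the weights by $\sup u$ and $\inf u$. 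Since $\frac{n-3}{n-1}\gamma\geq 0$ for $n\geq 3$, this gives
\[
k\,(\inf u)^{\frac{n-3}{n-1}\gamma}\leq (\sup u)^{\frac{n-3}{n-1}\gamma}/\ell,
\]
that is, $\ell\leq \frac1k(\sup u/\inf u)^{\frac{n-3}{n-1}\gamma}$.

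The main obstacle is converting this bound on $\ell$ into a bound on $D$ without losing an extra factor $(\sup u/\inf u)^\gamma$. Since $\sigma$ is a curve from $\partial M$ to $x$, we have $D\leq \ell$ directly, so no loss occurs: the upper bound on $\ell$ is exactly what we need. Thus the correct order is to use the $E$-minimizer, for which stability holds, and then the trivial inequality $d_g(x,\partial M)\leq \ell_g(\sigma)$. Taking the supremum over $x$ gives the inradius bound.

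For the final claim, suppose $\partial M$ is compact. Then every point lies within distance $\operatorname{InRad}(M)$ of the compact set $\partial M$. By completeness and Hopf--Rinow (valid for manifolds with boundary viewed as complete length spaces), closed bounded sets are compact, so $M$ is compact.
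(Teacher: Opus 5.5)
Your argument is correct and is essentially the paper's proof: you take an $E$-minimizer (a $u^{2\gamma}g$-geodesic) from $\partial M$ to $x$, apply the one-free-endpoint version of \cref{lem:SecondVariationGeneral} with $\psi=1-t/\ell$, discard the nonpositive terms using $\gamma\leq\frac{n-1}{n-2}$, and conclude via $d_g(x,\partial M)\leq\ell$. The opening detour through a $g$-minimizing geodesic and the lower bound $\ell\geq D(\inf u/\sup u)^\gamma$ play no role and can simply be deleted.
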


\begin{proof}
Set
\[
\alpha:=\frac{n-3}{n-1}\gamma.
\]
Since $n\geq 3$ and $\gamma\geq 0$, we have $\alpha\geq 0$.

Fix $x\in \operatorname{int}M$. Consider the functional
\[
E(\beta):=\int_\beta u^\gamma\,\mathrm{d}\ell_g,
\]
where $\beta$ is a compact curve in $M$.
Equivalently, $E$ is the length functional of the conformally changed metric $
\widetilde g:=u^{2\gamma}g$.
Since \(u\) is bounded above and bounded below away from zero,
$(M,\widetilde g)$ is complete. Hence there exists an $E$-minimizing curve
\[
\sigma:[0,\ell]\to M,
\]
from $\partial M$ to $x$. We parameterize $\sigma$ by $g$-arclength, so that
\[
\sigma(0)\in \partial M,\qquad \sigma(\ell)=x,
\qquad |\sigma'|_g=1.
\]
In particular,
\[
d_g(x,\partial M)\leq \ell.
\]

Because $\sigma$ minimizes $E$, the weighted second variation inequality
applies with the initial endpoint free on $\partial M$ and the final endpoint
fixed at $x$. The same computations leading to Lemma~\ref{lem:SecondVariationGeneral} show that for every $\psi\in C^\infty([0,\ell])$, with $\psi(\ell)=0$, we have 
\[
\begin{aligned}
0\leq
&\int_0^\ell u^\alpha
\Bigg[
(n-1)(\psi')^2
+\psi^2
\Bigg(
\left(\frac{n-2}{n-1}\gamma^2-\gamma\right)u^{-2}(u')^2
-\gamma u^{-2}|\nabla^\perp u|^2  \\
&\qquad\qquad\qquad\qquad
+\gamma u^{-1}\Delta u-\Ric(\sigma',\sigma')
\Bigg)
\Bigg]\,\mathrm{d}t \\
&\qquad
-u(\sigma(0))^\alpha\psi(0)^2
\left(
H_{\partial M}+\gamma u^{-1}u_\nu
\right)(\sigma(0)).
\end{aligned}
\]
Now choose
\[
\psi(t):=1-\frac{t}{\ell}.
\]
Then
\[
\psi(0)=1,\qquad \psi(\ell)=0,\qquad \psi'=-\frac1\ell.
\]
Substituting this choice of $\psi$ gives
\[
\begin{aligned}
0&\leq
\frac{n-1}{\ell^2}\int_0^\ell u^\alpha\,\mathrm{d}t  \\
&+\int_0^\ell u^\alpha
\left(1-\frac{t}{\ell}\right)^2
\Bigg[
\left(\frac{n-2}{n-1}\gamma^2-\gamma\right)u^{-2}(u')^2
-\gamma u^{-2}|\nabla^\perp u|^2  \\
&\qquad\qquad\qquad\qquad
+\gamma u^{-1}\Delta u-\Ric(\sigma',\sigma')
\Bigg]\,\mathrm{d}t \\
&\qquad
-u(\sigma(0))^\alpha
\left(
H_{\partial M}+\gamma u^{-1}u_\nu
\right)(\sigma(0)).
\end{aligned}
\]
Hence, using the inequalities in the assumptions, we have
\[
\begin{aligned}
0
&\leq
\frac{n-1}{\ell^2}\int_0^\ell u^\alpha\,\mathrm{d}t
-u(\sigma(0))^\alpha
\left(
H_{\partial M}+\gamma u^{-1}u_\nu
\right)(\sigma(0)) \\
&\leq
\frac{n-1}{\ell^2}\int_0^\ell u^\alpha\,\mathrm{d}t
-(n-1)k\,u(\sigma(0))^\alpha.
\end{aligned}
\]
Since $\alpha\geq 0$, we have
\[
\int_0^\ell u^\alpha\mathrm{d}t\leq \ell(\sup_M u)^\alpha
\]
and
\[
u(\sigma(0))^\alpha\geq (\inf_M u)^\alpha.
\]
Therefore
\[
0
\leq
\frac{n-1}{\ell}(\sup_M u)^\alpha
-(n-1)k(\inf_M u)^\alpha.
\]
Equivalently,
\[
\ell
\leq
\frac1k
\left(
\frac{\sup_M u}{\inf_M u}
\right)^\alpha.
\]
Since $d_g(x,\partial M)\leq \ell$, we obtain
\[
d_g(x,\partial M)
\leq
\frac1k
\left(
\frac{\sup_M u}{\inf_M u}
\right)^{\frac{n-3}{n-1}\gamma}.
\]
Taking the supremum over $x\in M$ gives the desired estimate. The last part of the statement is straightforward.
\end{proof}
\begin{remark}
    In \cref{prop:WeightedInradius}, when $n=2$, the same proof gives $\mathrm{InRad}(M)\leq \frac{1}{k}\left(\frac{\sup_M u}{\inf_M u}\right)^\gamma$, for every $\gamma\geq 0$. In addition, when $n\geq 2$ and $0\leq \gamma < \frac{4}{n-1}$ one can prove
    \[
    \mathrm{InRad}(M)\leq C(n,k,\gamma),
    \]
    for an explicitly computable constant $C(n,k,\gamma)$ only depending on $n,k,\gamma$. The proof is by now standard: one substitutes $\varphi:=u^{-\gamma/2}\psi$ in the analogue of \eqref{eqn:second varitationsummed}, and argues as in the proof of \cref{prop:WeightedInradius}. We omit the details.
\end{remark}

\begin{remark}\label{rem:AX24}
The fixed-endpoint version of Lemma~\ref{lem:SecondVariationGeneral} gives a
slightly better oscillation dependence in the Bonnet--Myers type estimate than
the one obtained in \cite[Theorem 1(1)]{AX24}. Indeed, assume that $M$ is complete,
that
\[
0<\inf_M u\leq \sup_M u<\infty,
\]
and that, for some $K>0$,
\begin{equation}\label{eqn:InequalityRicci}
-\gamma\Delta u + u \mathrm{Ric}\geq (n-1)Ku.
\end{equation}
Let $n\geq 3$, $0\leq \gamma\leq \frac{n-1}{n-2}$, and
\[
\alpha:=\frac{n-3}{n-1}\gamma .
\]
If $p,q\in M$, let $\sigma:[0,\ell]\to M$ be an $E$-minimizer between
$p$ and $q$, where
\[
E(\beta)=\int_\beta u^\gamma\mathrm{d}\ell_g,
\]
and parameterize $\sigma$ by $g$-arclength. Choosing as variation function
\[
\psi(t)=\sin\left(\frac{\pi t}{\ell}\right),
\]
and using the second variation computations in \cref{sec:Secondvariation}, \eqref{eqn:InequalityRicci}, and $0\leq \gamma\leq \frac{n-1}{n-2}$, we obtain
\[
0\leq
(n-1)\int_0^\ell u^\alpha
\left[
\frac{\pi^2}{\ell^2}
\cos^2\left(\frac{\pi t}{\ell}\right)
-
K
\sin^2\left(\frac{\pi t}{\ell}\right)
\right]\mathrm{d}t .
\]
Estimating $u^\alpha$ by $(\sup_M u)^\alpha$ in the positive term and by
$(\inf_M u)^\alpha$ in the negative term gives
\[
0\leq
(n-1)\left[
(\sup_M u)^\alpha\frac{\pi^2}{2\ell}
-
K(\inf_M u)^\alpha\frac{\ell}{2}
\right].
\]
Hence
\[
\ell
\leq
\frac{\pi}{\sqrt K}
\left(
\frac{\sup_M u}{\inf_M u}
\right)^{\alpha/2}.
\]
Since $d_g(p,q)\leq \ell$, taking the supremum over $p,q\in M$ yields
\[
\diam(M,g)
\leq
\frac{\pi}{\sqrt K}
\left(
\frac{\sup_M u}{\inf_M u}
\right)^{\frac{n-3}{2(n-1)}\gamma}.
\]
Thus the fixed-endpoint second-variation argument improves the oscillation
exponent in the corresponding diameter estimate from
\[
\frac{n-3}{n-1}\gamma,
\]
obtained in \cite[Theorem 1(1)]{AX24} using $\mu$-bubble techniques, to
\[
\frac{n-3}{2(n-1)}\gamma .
\]
\end{remark}
\printbibliography

\end{document}